\documentclass[11pt]{amsart}
\usepackage{graphicx}
\usepackage{epsfig}
\usepackage[latin1]{inputenc}
\usepackage{amsmath}
\usepackage{amsfonts}
\usepackage{amssymb}
\usepackage{amscd}
\usepackage{verbatim}
\usepackage{subfigure}
\usepackage{pinlabel}
\usepackage{tikz}
\usetikzlibrary{arrows}

    \oddsidemargin  0.0in
    \evensidemargin 0.0in
    \textwidth      6.5in
    \headheight     0.0in
    \topmargin      0.0in
    \textheight=8.5in

\newtheorem{theorem}{Theorem}[section]
\newtheorem{Theorem}{Theorem}
\newtheorem{lemma}[theorem]{Lemma}
\newtheorem{proposition}[theorem]{Proposition}
\newtheorem{corollary}[theorem]{Corollary}
\newtheorem{Corollary}[Theorem]{Corollary}
\newtheorem{definition}[theorem]{Definition}
\newtheorem{remark}[theorem]{Remark}

\theoremstyle{remark}
\newtheorem{claim}{Claim}

\def\Z{\mathbb{Z}}
\def\R{\mathbb{R}}

\def\F{\mathbb{F}}
\def\A{\mathcal{A}}
\def\H{\mathcal{H}}
\def\cI{\mathcal{I}}
\def\bfx{\mathbf{x}}
\def\bfy{\mathbf{y}}

\title{Bordered Heegaard Floer homology and the tau-invariant of cable knots}

\subjclass[2009]{}
\author{Jennifer Hom}

\address{Department of Mathematics, Columbia University, New York, NY 10027
\newline\indent{\tt hom@math.columbia.edu}}

\numberwithin{equation}{section}

\begin{document}

\begin{abstract}
We define a concordance invariant, $\varepsilon(K)$, associated to the knot Floer complex of $K$, and give a formula for the Ozsv{\'a}th-Szab{\'o} concordance invariant $\tau$ of $K_{p,q}$, the $(p,q)$-cable of a knot $K$, in terms of $p$, $q$, $\tau(K)$ and $\varepsilon(K)$. We also describe the behavior of $\varepsilon$ under cabling, allowing one to compute $\tau$ of iterated cables. Various properties and applications of $\varepsilon$ are also discussed.
\end{abstract}

\maketitle

\section{Introduction}
Many classical knot invariants behave predictably under the satellite operation of cabling. For example, the signature of the $(p,q)$-cable of a knot $K$ is completely determined by $p$, $q$, and the signature of $K$. In this paper, we investigate the behavior of the Ozsv\'ath-Szab\'o concordance invariant $\tau$ under cabling, which depends on strictly more than just $p$, $q$, and $\tau(K)$. We define a  concordance invariant $\varepsilon(K)$ that, along with $p$, $q$ and $\tau(K)$, completely determines $\tau$ of the $(p,q)$-cable of $K$. 

To a knot $K \subset S^3$, Ozsv{\'a}th and Szab{\'o} \cite{OSknots}, and independently Rasmussen \cite{R}, associate a $\Z \oplus \Z$-filtered chain complex $CFK^{\infty}(K)$, whose doubly filtered chain homotopy type is an invariant of $K$. Looking at just one of the filtrations (i.e., taking the degree zero part of the associated graded object with respect to the other filtration) yields the $\Z$-filtered chain complex $\widehat{CFK}(K)$, and associated to this chain complex is the $\Z$-valued smooth concordance invariant $\tau(K)$; see \cite{OS4ball}.
Studying $\tau$ has yielded many nice results, such as a new proof of the Milnor conjecture \cite{OS4ball}, and examples of Alexander polynomial one knots which are not smoothly slice (for example, \cite{Livingstoncomp}, \cite{HeddenWhitehead}). 

Recall that the $(p,q)$-cable of a knot $K$, denoted $K_{p,q}$, is the satellite knot with pattern the $(p,q)$-torus knot and companion $K$. More precisely, we can construct $K_{p,q}$ by equipping the boundary of a tubular neighborhood of $K$ with the $(p,q)$-torus knot, where the knot traverses the longitudinal direction $p$ times and the meridional direction $q$ times. We will assume throughout that $p>1$. (This assumption does not cause any loss of generality, since $K_{-p,-q}=\mathrm{r}K_{p,q}$, where $\mathrm{r}K_{p,q}$ denotes $K_{p,q}$ with the opposite orientation, and since $K_{1,q}=K$.) We denote the $(p,q)$-torus knot by $T_{p,q}$.

We compute $\tau(K_{p,q})$ in terms of $p$, $q$, $\tau(K)$, and $\varepsilon(K)$, a $\{-1, 0, 1\}$-valued concordance invariant associated to $CFK^{\infty}(K)$.

\begin{Theorem}
\label{thm:Main}
Let $K \subset S^3$. Then $\tau(K_{p,q})$ is completely determined by $p$, $q$, $\tau(K)$, and $\varepsilon(K)$ in the following manner:
\begin{enumerate}
	 \item If $\varepsilon(K)=1$, then $\tau(K_{p,q})=p\tau(K)+\frac{(p-1)(q-1)}{2}$.
	 \item If $\varepsilon(K)=-1$, then $\tau(K_{p,q})=p\tau(K)+\frac{(p-1)(q+1)}{2}$.
	\item If $\varepsilon(K)=0$, then $\tau(K)=0$ and $\tau(K_{p, q})= \tau(T_{p,q})=\left\{
	\begin{array}{ll}
		\frac{(p-1)(q+1)}{2} & \text{if } q<0\\
		\frac{(p-1)(q-1)}{2} & \text{if } q>0.
	\end{array} \right.$
\end{enumerate}
\end{Theorem}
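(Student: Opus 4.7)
The plan is to combine the bordered Floer pairing theorem of Lipshitz--Ozsv\'ath--Thurston with an explicit analysis of both tensor factors. Decompose $S^3$ as $(S^3 \setminus \nu(K)) \cup V$, where $V$ is a solid torus neighborhood of $K$ carrying the pattern $T_{p,q}$, so that
\[
\widehat{CFK}(S^3, K_{p,q}) \simeq \widehat{CFA}(V, T_{p,q}) \boxtimes \widehat{CFD}(S^3 \setminus \nu(K))
\]
as filtered chain complexes. Since $\tau(K_{p,q})$ can be read off as the minimal Alexander grading of a cycle whose class in $\widehat{HF}(S^3) \cong \F$ is nonzero, it suffices to identify this distinguished cycle in the tensor product and compute its Alexander grading.

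First I would describe $\widehat{CFD}(S^3 \setminus \nu(K))$ directly from $CFK^\infty(K)$: in a basis of $CFK^\infty(K)$ that is simultaneously horizontally and vertically simplified, the type $D$ module has a concrete form, with generators and $\rho$-coefficient operations dictated by the arrows of the simplified complex. The distinguished vertical cycle $x_0$ of top Alexander grading witnesses $\tau(K)$, and $\varepsilon(K)$ records the local structure at $x_0$: it is $+1$ if $x_0$ is the target of a nontrivial horizontal arrow, $-1$ if it is the source of one, and $0$ otherwise (with well-definedness and concordance invariance of $\varepsilon$ handled in earlier sections). In parallel, I would compute $\widehat{CFA}(V, T_{p,q})$ together with its Alexander filtration from an explicit bordered Heegaard diagram for the $(p,q)$-torus knot sitting inside $V$, treating $q > 0$ and $q < 0$ as separate combinatorial setups.

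With both factors in hand, I would form the box tensor product and, in each case of $\varepsilon(K)$, exhibit the distinguished cycle that realizes $\tau(K_{p,q})$. When $\varepsilon(K) = 0$ the generator $x_0$ has no incoming or outgoing horizontal arrow, so the pattern pairs against it as it would against the unknot, forcing $\tau(K) = 0$ and $\tau(K_{p,q}) = \tau(T_{p,q})$. When $\varepsilon(K) = +1$, the incoming horizontal arrow at $x_0$ produces a cycle of Alexander grading $p\tau(K) + (p-1)(q-1)/2$; when $\varepsilon(K) = -1$, an outgoing horizontal arrow shifts the grading by an additional $(p-1)$, yielding $p\tau(K) + (p-1)(q+1)/2$. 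I expect the main obstacle to be this final step: explicitly exhibiting the distinguished cycle in each case and verifying that it has the claimed Alexander grading, which requires careful bookkeeping of how the $\rho$-strings coming from the cabling pattern interact with horizontal and vertical arrows of varying lengths in $\widehat{CFD}$. A secondary subtlety is unifying the $q > 0$ and $q < 0$ Heegaard diagrams, whose combinatorics genuinely differ, into the clean case-by-case formulas of the theorem.
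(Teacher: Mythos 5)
Your strategy for the $(p,pn+1)$-cables is essentially the paper's: pair $\widehat{CFD}$ of the framed knot complement, computed from a simplified basis of $CFK^\infty(K)$, against $\widehat{CFA}$ of a cabling pattern in the solid torus, locate the cycle generating $\widehat{HF}(S^3)$ in the box tensor product, and compute its Alexander grading (the paper does this last step via $A(\mathbf{x})=\tfrac12\langle c_1(\underline{\mathfrak{s}}(\mathbf{x})),[\widehat{F}]\rangle$ and explicit periodic domains). Your reading of $\varepsilon$ off the horizontal arrow structure at the distinguished vertical generator, and the direct-summand argument reducing $\varepsilon(K)=0$ to the torus-knot model, also match the paper.

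There are, however, two substantive issues. First, you assume a basis of $CFK^\infty(K)$ that is \emph{simultaneously} horizontally and vertically simplified. Such a basis is not known to exist for an arbitrary knot, and the paper is deliberately careful here: Lemma \ref{lem:simplifiedbasis} produces a vertically simplified basis \emph{or} a horizontally simplified one, and Lemma \ref{lem:basis} establishes only the weaker local statement actually needed, namely a horizontally simplified basis in which one particular element is also the distinguished element of \emph{some} vertically simplified basis (together with the relevant horizontal arrow). Your argument should be rerouted through that weaker statement; as written it rests on an unproved structural assumption. Second, your route to general $(p,q)$ diverges from the paper's and is where the real difficulty hides. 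The paper only ever computes $\widehat{CFA}$ for the genus-one $(p,1)$-pattern, obtaining all $(p,pn+1)$-cables by varying the framing $n$, and then deduces the general $(p,q)$ formula from Van Cott's band-move/concordance argument: the function $h(q)=\tau(K_{p,q})-\tfrac{(p-1)q}{2}$ is non-increasing and is constant on the arithmetic progression $q=pn+1$, hence constant. Your plan instead requires explicit bordered diagrams and full $\mathcal{A}_\infty$-module structures for $(p,r)$-torus-knot patterns for every residue $r$ mod $p$ (separately for $q>0$ and $q<0$); this is exactly the computation the paper states Van Cott's results let one avoid, and you give no indication of how those modules would be determined. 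If you intend the purely bordered route, this is a large missing computation, not mere bookkeeping; otherwise you should import the Van Cott step explicitly.
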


\noindent Since $\tau(K_{p,q})$ depends on both $\tau(K)$ and $\varepsilon(K)$, we would like to also know the behavior of $\varepsilon$ under cabling so that we can compute $\tau$ of iterated cables.

\begin{Theorem}
\label{thm:ep}
The invariant $\varepsilon(K)$ behaves in the following manner under cabling:
\begin{enumerate}
	\item If $\varepsilon(K)\neq 0$, then $\varepsilon(K_{p,q})=\varepsilon(K)$.
	\item If $\varepsilon(K)=0$, then $\varepsilon(K_{p,q})=\varepsilon(T_{p, q})=\left\{
	\begin{array}{ll}
		-1 & \text{if } q<-1\\
		0 & \text{if } |q|=1\\
		1 & \text{if } q>1.
	\end{array} \right.$
\end{enumerate}
\end{Theorem}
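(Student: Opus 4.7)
The plan is to refine the bordered Heegaard Floer calculation used in the proof of Theorem \ref{thm:Main}, so as to extract not only $\tau(K_{p,q})$ but also enough of the chain homotopy type of $CFK^{\infty}(K_{p,q})$ to read off $\varepsilon(K_{p,q})$. Recall that $\varepsilon(K)$ is detected by a small, explicit piece of $CFK^{\infty}(K)$: after discarding acyclic direct summands, $\varepsilon$ is determined by the arrow(s) incident to the distinguished generator that survives to $\widehat{HF}(S^3)$ in a horizontally or vertically simplified basis. So to compute $\varepsilon(K_{p,q})$ it suffices to identify this distinguished generator and its neighboring arrows in a reduced model of $CFK^{\infty}(K_{p,q})$.

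By the pairing theorem, $CFK^{\infty}(K_{p,q})$ is (filtered chain homotopy equivalent to) a box tensor product of the type $D$ invariant of the knot complement $S^3\setminus\nu K$ with the type $A$ invariant of the $(p,q)$-cabling pattern in the solid torus. The proof of Theorem \ref{thm:Main} already computes enough of this pairing to pinpoint the generator realizing $\tau(K_{p,q})$; I would push the same calculation one step further to identify the arrows meeting that generator, which is precisely the data needed for $\varepsilon(K_{p,q})$. In case (1), when $\varepsilon(K)\ne 0$, the type $D$ structure of $S^3\setminus\nu K$ contains an ``unstable chain'' whose direction is governed by $\varepsilon(K)$, and tensoring with the cabling module preserves this direction, yielding $\varepsilon(K_{p,q})=\varepsilon(K)$. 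In case (2), when $\varepsilon(K)=0$, the type $D$ structure agrees with that of the unknot complement on the portion relevant to $\varepsilon$, so the pairing reproduces the $\varepsilon$-detecting piece of $CFK^{\infty}(T_{p,q})$, giving $\varepsilon(K_{p,q})=\varepsilon(T_{p,q})$.

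It remains to compute $\varepsilon(T_{p,q})$ directly: for $q>1$ the torus knot $T_{p,q}$ is a positive L-space knot, so $\varepsilon(T_{p,q})=1$; for $q<-1$ it is the mirror of an L-space knot, so $\varepsilon(T_{p,q})=-1$; and for $|q|=1$ it is the unknot, so $\varepsilon(T_{p,q})=0$. These can be read off from the explicit staircase form of $CFK^{\infty}(T_{p,q})$.

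The main obstacle is the refinement required of the box tensor product in the paragraph above: determining $\varepsilon$ demands that we track the exact arrows incident to the surviving generator, not merely its filtration level. The subtlest case is $\varepsilon(K)=0$, where one has to verify that the acyclic summands of $CFK^{\infty}(K)$ do not contribute arrows to the relevant portion of the paired complex, so that the reduced model truly matches that of $T_{p,q}$.
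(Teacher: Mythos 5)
Your strategy---read $\varepsilon(K_{p,q})$ directly off a reduced model of $CFK^{\infty}(K_{p,q})$ obtained from the box tensor product---runs into a genuine obstruction with the machinery as set up here. The pairing theorems invoked in this paper compute the $\Z$-filtered complex $\widehat{CFK}(K_{p,q})\simeq\widehat{CFA}(p,1)\boxtimes\widehat{CFD}(Y_{K,n})$, equivalently the $\F[U]$-module $gCFK^-(K_{p,q})$; they do not produce the doubly filtered chain homotopy type of $CFK^{\infty}(K_{p,q})$. But $\varepsilon$ is defined via $\nu$ and $\nu'$, i.e., via the subquotient complexes $A_\tau$ and $A'_\tau$ of $CFK^{\infty}$, and it is not determined by $\widehat{CFK}$ alone: in a simplified basis you need to know both the vertical \emph{and} the horizontal arrows at the distinguished generator. ``Pushing the calculation one step further to identify the arrows meeting that generator'' would only recover the vertical differential of the cable, which already gives $\tau(K_{p,q})$ and nothing more. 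Relatedly, your claim that the direction of the unstable chain in $\widehat{CFD}(Y_{K,n})$ is governed by $\varepsilon(K)$ is not correct---it is governed by the framing $n$ relative to $2\tau(K)$; what $\varepsilon(K)$ controls is the horizontal arrow at the distinguished vertical generator $x_0$ (Lemmas \ref{lem:basis} and \ref{lem:epsilon0}). A second, more practical gap: you would need $\widehat{CFA}$ of a general $(p,q)$-pattern, whereas the paper deliberately only computes the $(p,1)$-pattern and the iterated pattern $T_{p,1;2,2m+1}$.

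The paper circumvents exactly this issue by detecting $\varepsilon(K_{p,q})$ indirectly through $\tau$ of a \emph{further} cable: by Theorem \ref{thm:Main}, $\tau(K_{p,pn+1;2,-1})=2\tau(K_{p,pn+1})-1$ forces $\varepsilon(K_{p,pn+1})=1$, and that $\tau$ is a hat-level quantity computable from the pairing theorem applied to the iterated cable pattern (Section \ref{sec:epsilon1}). The extension from $(p,pn+1)$ to general $(p,q)$ is then done not by bordered Floer at all, but by a Van Cott-style band-move/concordance-genus argument applied to the function $H(q)=\tau(K_{p,q;2,m})-(p-1)q$ (Section \ref{sec:epsilonpq}), with $\varepsilon(K)=-1$ handled by mirroring. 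Your treatment of the $\varepsilon(K)=0$ case and of $\varepsilon(T_{p,q})$ is consistent with the paper (the direct summand $\widehat{CFD}(Y_{U,n})\subset\widehat{CFD}(Y_{K,n})$ and property (\ref{item:g}) of $\varepsilon$), but to make your main argument work you would either have to upgrade the pairing theorem to one computing $CFK^-$ with both filtrations, or reroute through $\tau$ of an iterated cable as the paper does.
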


\noindent Recall that if $K$ and $K'$ are concordant, then $K_{p,q}$ and $K'_{p, q}$ are concordant as well. In particular, if $K$ and $K'$ are concordant, then $\tau(K_{p,q})=\tau(K'_{p, q})$.
One consequence of Theorems \ref{thm:Main} and \ref{thm:ep} is that the additional concordance information about $K$ coming from $\tau$ of iterated cables of $K$ is exactly the invariant $\varepsilon$. In fact, knowing $\tau$ of just two cables of $K$, one positive and one negative, is sufficient to determine $\varepsilon(K)$. Thus, knowing information about the $\Z$-filtered chain complex $\widehat{CFK}(K_{p,q})$, namely $\tau(K_{p,q})$, can tell us information about the $\Z \oplus \Z$-filtered chain complex $CFK^{\infty}(K)$, namely $\varepsilon(K)$.

Since $\tau(K_{p,q})$ depends on strictly more than just $\tau(K)$, it is natural to ask if there exist knots $K$ and $K'$ with $\tau(K)=\tau(K')$ but $\tau(K_{p,q})\neq\tau(K'_{p,q})$. We answer this question in the affirmative:

\begin{Corollary}
\label{cor:B}
For any integer $n$, there exists knots $K$ and $K'$ with $\tau(K)=\tau(K')=n$, such that $\tau(K_{p,q})\neq\tau(K'_{p,q})$, for all pairs of relatively prime integers $p$ and $q$, $p > 1$.
\end{Corollary}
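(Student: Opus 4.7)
The plan is to deduce Corollary \ref{cor:B} from Theorem \ref{thm:Main}. I will show it is enough to produce, for every integer $n$, a pair of knots $K, K'$ with $\tau(K) = \tau(K') = n$, $\varepsilon(K) = 1$, and $\varepsilon(K') = -1$. Granting this, cases (1) and (2) of Theorem \ref{thm:Main} give
\[
\tau(K_{p,q}) - \tau(K'_{p,q}) = \frac{(p-1)(q-1)}{2} - \frac{(p-1)(q+1)}{2} = -(p-1),
\]
which is nonzero for every coprime pair $(p,q)$ with $p > 1$, yielding the conclusion of the corollary. I will also remark why one cannot substitute $\varepsilon = 0$ for one of the two knots: if $\varepsilon(K) = 0$, then case (3) of Theorem \ref{thm:Main} forces $n = 0$ and computes $\tau(K_{p,q}) = \tau(T_{p,q})$, which agrees with case (1) when $q > 0$ and with case (2) when $q < 0$; so whichever nonzero value of $\varepsilon(K')$ one chose, some sign of $q$ would give $\tau(K_{p,q}) = \tau(K'_{p,q})$.

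To construct the required pairs, I would build on the observation that positive torus knots $T_{a,b}$ are L-space knots, so $\varepsilon(T_{a,b}) = 1$ and $\tau(T_{a,b}) = (a-1)(b-1)/2$, while their mirrors satisfy $\varepsilon = -1$ and the negated $\tau$. The additivity of $\tau$ under connected sum, together with the connected-sum behavior of $\varepsilon$ developed earlier in the paper (where $\varepsilon$ of a connect sum is governed by the ``dominant'' summand in the natural partial order on knot Floer complexes), lets one realize any integer $n$ as $\tau$ of a suitable connect sum of torus knots and mirrors of torus knots while arranging $\varepsilon$ to be $+1$ (by making a positive torus knot summand dominant) or $-1$ (by making a mirror summand dominant). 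For negative $n$, one mirrors the whole construction; the case $n = 0$ is analogous but requires genuinely mixed sums in which the $\tau$ contributions of the positive and negative summands cancel.

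The main obstacle is the dominance step: verifying that one can in fact realize both signs of $\varepsilon$ at each prescribed value of $\tau$ using torus-knot building blocks. This is not an independent computation but a consequence of the structural properties of $\varepsilon$ established earlier in the paper, in particular its behavior under connected sum and mirroring. Once those tools are in hand, the displayed equation above finishes the proof of Corollary \ref{cor:B}.
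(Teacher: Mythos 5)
Your reduction is exactly the paper's: find, for each $n$, knots $K$ and $K'$ with $\tau(K)=\tau(K')=n$, $\varepsilon(K)=1$, $\varepsilon(K')=-1$, and then observe that cases (1) and (2) of Theorem \ref{thm:Main} give $\tau(K_{p,q})-\tau(K'_{p,q})=-(p-1)\neq 0$. That computation is correct, and your side remark about why an $\varepsilon=0$ knot cannot serve as one of the pair is also correct.

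The gap is in your construction of the pair. You propose connected sums of positive torus knots and mirrors of torus knots, and you invoke a ``dominance'' principle to control $\varepsilon$ of the sum. But the only connected-sum statements available in this paper are property (6) of the Proposition in Section \ref{sec:epsilon}: $\varepsilon(K\# K')=\varepsilon(K)$ when $\varepsilon(K)=\varepsilon(K')$, and $\varepsilon(K\# K')=\varepsilon(K')$ when $\varepsilon(K)=0$. Neither applies to a sum of a knot with $\varepsilon=1$ and a knot with $\varepsilon=-1$, which is precisely the situation your construction forces. For instance, to get $\tau=n>0$ with $\varepsilon=-1$ you must sum a mirror torus knot (with $\varepsilon=-1$, $\tau<0$) against positive torus knots (with $\varepsilon=1$), and for $n=0$ you say explicitly that you need ``genuinely mixed sums''---exactly the case the paper's results do not cover. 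The value of $\varepsilon$ on such mixed sums is genuinely subtle (it is not determined by the $\varepsilon$'s of the summands alone; sorting this out is the content of the ordered-group machinery in \cite{Homsmooth}, which is not available here). The paper sidesteps all of this by using \emph{cables} rather than connected sums: Theorem \ref{thm:ep} guarantees $\varepsilon(R_{2,2m+1})=1$ and $\varepsilon(L_{2,2m+1})=-1$ for the right- and left-handed trefoils $R$ and $L$, while Theorem \ref{thm:Main} shows $\tau(R_{2,2m+1})$ and $\tau(L_{2,2m+1})$ each sweep out all of $\Z$ as $m$ varies, so one simply chooses $m$ to hit $\tau=n$ on each side. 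Replacing your connected-sum step with this cabling construction closes the gap with no new input.
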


\noindent We also prove the following properties of the concordance invariant $\varepsilon$:

\begin{itemize}
\item If $K$ is slice, then $\varepsilon(K)=0$.
\vspace{5pt}
\item \label{item:mirror} $\varepsilon(\overline{K}) = -\varepsilon(K)$, where $\overline{K}$ denotes the mirror of $K$.
\vspace{5pt}
\item If $\varepsilon(K)=0$,  then $\tau(K)=0$.
\vspace{5pt}
\item There exist knots $K$ with $\tau(K)=0$ but $\varepsilon(K) \neq 0$; that is, $\varepsilon(K)$ is \emph{strictly stronger} than $\tau(K)$ at obstructing sliceness.
\vspace{5pt}
\item If $|\tau(K)|=g(K)$, where $g(K)$ denotes the genus of $K$, then $\varepsilon(K)=\mathrm{sgn} \ \tau(K)$.
\vspace{5pt}
\item If $K$ is \emph{homologically thin} (meaning $\widehat{HFK}(K)$ is supported on a single diagonal with respect to its bigrading), then $\varepsilon(K)=\mathrm{sgn} \ \tau(K)$.
\vspace{5pt}
\item  If $\varepsilon(K)=\varepsilon(K')$, then $\varepsilon(K \# K')=\varepsilon(K)=\varepsilon(K')$.  If $\varepsilon(K)=0$, then $\varepsilon(K \# K')=\varepsilon(K')$.
\end{itemize}

\vspace{2pt}

\noindent In \cite{Homsmooth}, we use these properties of $\varepsilon$ to define a new smooth concordance homomorphism to a totally ordered group, which we denote $\mathcal{F}$, defined in terms of the knot Floer complex. One application of this homomorphism is a new proof that the kernel of the map from the smooth concordance group to the topological concordance group is of infinite rank, a fact first proved by Endo \cite{Endo}; see also the recent paper by Hedden and Kirk \cite{HeddenKirk}.

Recall from \cite{OS4ball} that 
$$g_4(K)\geq |\tau(K)|,$$
where $g_4(K)$ denotes the smooth $4$-ball genus of the knot $K$. The following corollary was suggested to me by Livingston:

\begin{Corollary}[Livingston]
\label{cor:Livingston}
If $\varepsilon(K) \neq \mathrm{sgn}\ \tau(K)$, then $g_4(K) \geq |\tau(K)|+1$.
\end{Corollary}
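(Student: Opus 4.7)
The plan is to combine Theorem \ref{thm:Main} with two classical facts---the Ozsv\'ath--Szab\'o inequality $g_4 \ge \tau$ and the standard cable slice-genus bound $g_4(K_{p,q}) \le p\, g_4(K) + g_4(T_{p,q})$---and then exploit integrality. First I would dispose of the degenerate cases. Since the listed properties of $\varepsilon$ say that slice knots satisfy $\varepsilon = 0$ and $\varepsilon(\overline{K}) = -\varepsilon(K)$, the hypothesis $\varepsilon(K) \ne \mathrm{sgn}\ \tau(K)$ reduces, up to replacing $K$ by its mirror, to two situations: (a) $\tau(K) = 0$ and $\varepsilon(K) \ne 0$, in which $K$ cannot be slice and so $g_4(K) \ge 1 = |\tau(K)| + 1$; and (b) $\tau(K) > 0$ together with $\varepsilon(K) = -1$.

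In case (b), I would fix any $p \ge 2$ and any positive integer $q$ coprime to $p$. By the resolved Milnor conjecture $g_4(T_{p,q}) = (p-1)(q-1)/2$, and Theorem \ref{thm:Main}(2) gives $\tau(K_{p,q}) = p\tau(K) + (p-1)(q+1)/2$. Stringing the cabling bound and the $\tau$-inequality together yields
\[
p\, g_4(K) + \tfrac{(p-1)(q-1)}{2} \;\ge\; g_4(K_{p,q}) \;\ge\; \tau(K_{p,q}) \;=\; p\tau(K) + \tfrac{(p-1)(q+1)}{2}.
\]
The $q$-dependent terms cancel, leaving $g_4(K) \ge \tau(K) + (p-1)/p$. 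Since $g_4(K)$ and $\tau(K)$ are integers and $(p-1)/p > 0$, this forces $g_4(K) \ge \tau(K) + 1 = |\tau(K)| + 1$.

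I do not expect a serious obstacle: the cable inequality is standard, obtained by attaching $p$ parallel pushoffs (with respect to a fixed framing) of a minimal-genus slice surface for $K$ in $B^4$ to a minimal-genus slice surface for the torus-knot pattern $T_{p,q}$, and the remaining manipulation is a one-line algebraic identity. The essential structural content is that the numerators $(q-1)$ and $(q+1)$ appearing in Theorem \ref{thm:Main}(1) and (2) differ by exactly $2$, so that the $\varepsilon(K)=-1$ hypothesis in the $\tau(K) > 0$ setting pushes $\tau(K_{p,q})$ above the cabling bound by just enough to promote the inequality $g_4(K) \ge \tau(K)$ to $g_4(K) \ge \tau(K)+1$ after rounding.
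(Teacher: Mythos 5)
Your argument is correct and is essentially the paper's proof, just run in slightly greater generality: the paper carries out exactly your chain of inequalities for the single cable $K_{2,1}$, using $g_4(K_{2,1})\le 2g_4(K)$ (two pushed-off copies of a slice surface joined by a twisted band) and $\tau(K_{2,1})=2\tau(K)+1$, then finishing with the same integrality step. Your separate treatment of the $\tau(K)=0$ case via non-sliceness is fine but not needed, since the cable inequality already gives $g_4(K)\ge 1$ there.
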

\noindent In particular, if $g_4(K)= |\tau(K)|$, then $\varepsilon(K)=\mathrm{sgn} \ \tau(K)$, generalizing the fifth property of $\varepsilon$ listed above.

The behavior of $\tau$ under cabling has been well-studied, with the results of this paper finally providing a complete answer to the question. Previous results include bounds on $\tau(K_{p,q})$, and, in certain special cases, formulas. The first such result appears in \cite{Heddenthesis}, and is strengthened in \cite{HeddencablingII}, where Hedden proves the following inequality for $\tau$ of the $(p, pn+1)$-cable of a knot $K$:
$$p\tau(K)+\frac{pn(p-1)}{2} \leq \tau(K_{p, pn+1}) \leq p\tau(K)+\frac{pn(p-1)}{2} +p-1.$$
Furthermore, he proves that in the special case when $\tau(K)=g(K)$, we have the equality
$$\tau(K_{p, pn+1})=p\tau(K)+\frac{pn(p-1)}{2},$$
and when $\tau(K)=-g(K)$, we have
$$\tau(K_{p, pn+1})=p\tau(K)+\frac{pn(p-1)}{2}+p-1.$$
He also proves that for $|n|$ sufficiently large, 
\begin{equation*}
\tau(K_{p, pn+1})= \left\{
\begin{array}{ll}
p\tau(K)+\frac{pn(p-1)}{2}+p-1 & \text{or}\\
p\tau(K)+\frac{pn(p-1)}{2}. &
\end{array} \right.
\end{equation*}
These results are obtained by studying the effects of cabling on a Heegaard diagram compatible with the knot in $S^3$. In this context, $(p, pn+1)$-cables are significantly easier to work with than general $(p,q)$-cables. With Hedden's results, Van Cott \cite{VanCott} was able to use classical low-dimensional techniques to extend the results to general $(p, q)$-cables. 

Recently, the bordered Heegaard Floer package of Lipshitz, Ozsv{\'a}th and Thurston \cite{LOT} has shown to be a powerful tool in calculating $\tau$ of satellite knots; see, for example, Levine's work in \cite{Levine}. Regarding cables, Petkova \cite{Petkova} uses bordered Heegaard Floer homology to calculate $HFK^-$ for $(p, pn+1)$-cables of homologically thin knots, including information about absolute gradings, and thus $\tau$. The methods of Van Cott can be used to extend Petkova's formulas for $\tau$ to general $(p,q)$-cables of homologically thin knots.

In this paper, we will use the tools of bordered Heegaard Floer homology to determine our formula for $\tau$ of the $(p,q)$-cable of any knot in terms of $p$, $q$, $\tau(K)$, and the invariant $\varepsilon(K)$, which we define in Section \ref{sec:epsilon}. The results of Hedden, Van Cott, and Petkova concerning $\tau$ of cables can be seen as special cases of Theorem \ref{thm:Main}.

\vspace{.5cm}
\noindent \textbf{Organization.} In Section \ref{sec:background}, we recall the necessary constructions from bordered Heegaard Floer homology and knot Floer homology. In Section \ref{sec:epsilon}, we define $\varepsilon$ in terms of previously defined invariants associated to the knot Floer complex. The main calculation of this paper lies in Section \ref{sec:Main},  where we compute $\tau$ for $(p, pn+1)$-cables using bordered Heegaard Floer homology, by first identifying an element in the tensor product that generates $\widehat{HF}(S^3)$, and then computing its absolute Alexander grading. In Section \ref{sec:pq}, we use the methods of Van Cott to extend these results to all $(p,q)$-cables. To compute $\varepsilon(K_{p,q})$, we consider $\tau$ of certain iterated cables (Section \ref{sec:epsilon1}), and then use a generalization of Van Cott's work, along with various symmetry properties, to obtain the general result (Section \ref{sec:epsilonpq}). Finally, in Section \ref{sec:cor}, we prove Corollary \ref{cor:B} by finding knots $K$ and $K'$ such that $\tau(K)=\tau(K')$, but $\varepsilon(K)\neq\varepsilon(K')$, and Corollary \ref{cor:Livingston} by classical low-dimensional techniques. We work with $\F=\Z / 2\Z$ coefficients throughout.

\vspace{.5cm}
\noindent \textbf{Acknowledgements.} I would like to thank Paul Melvin for his support and encouragement during this project, and Matt Hedden, Robert Lipshitz, Chuck Livingston, Adam Levine, Dylan Thurston, Peter Ozsv\'ath, and Rumen Zarev for helpful conversations. I would also like to thank Shea Vela-Vick for his comments on an earlier draft of this paper, and MSRI for hosting me in the spring of 2010, when much of this work was done. Lastly, I thank the referee for many helpful suggestions.

\section{An overview of bordered Heegaard Floer homology and knot Floer homology}
\label{sec:background}

We begin with a few algebraic preliminaries, before proceding to a brief overview of bordered Heegaard Floer homology and knot Floer homology.

\subsection{Algebraic preliminaries} 
\label{subsec:Ainfty}
For the reader unfamiliar with the algebraic structures involved in bordered Heegaard Floer homology, such as $\mathcal{A}_{\infty}$-modules, the Type $D$ structures of \cite{LOT}, and the ``box'' tensor product, we recount the definitions below. For a more detailed description, we refer the reader to \cite[Chapter 2]{LOT}.

Let $\mathcal{A}$ be a unital (graded) algebra over $\F$ with an orthogonal basis $\{\iota_i\}$ for the subalgebra of idempotents, $\mathcal{I} \subset \mathcal{A}$, such that $\sum \iota_i =1 \in \mathcal{A}$. In what follows, all of the tensor products are over $\mathcal{I}$. We suppress grading shifts for ease of exposition.

A \emph{(right unital) $\mathcal{A}_{\infty}$-module} is an $\F$-vector space $M$ equipped with a right $\mathcal{I}$-action such that
$$M=\bigoplus_i M\iota_i,$$
and a family of maps
$$m_{i}: M \otimes \mathcal{A}^{\otimes i-1} \rightarrow M, \quad i \geq 1$$
satisfying the $\mathcal{A}_{\infty}$ conditions
\begin{align*}
0=&\sum_{i=1}^{n} m_{n-i+1}(m_i(x \otimes a_1 \otimes \ldots \otimes a_{i-1}) \otimes \ldots \otimes  a_{n-1}) + \sum_{i=1}^{n-2} m_{n-1}(x \otimes a_1 \otimes \ldots \otimes a_i a_{i+1} \otimes \ldots a_{n-1})
\end{align*}
and the unital conditions
\begin{align*}
m_2 (x, 1) &= x\\
m_i(x, \ldots, 1, \ldots)&=0, \quad i>2.
\end{align*}

\noindent We say that $M$ is \emph{bounded} if there exists an integer $n$ such that $m_i=0$ for all $i>n$.

A \emph{Type $D$ structure over $\mathcal{A}$} is an $\F$-vector space $N$ equipped with a left $\mathcal{I}$-action such that
$$N=\bigoplus_i \iota_i N,$$
and a map
$$\delta_1: N \rightarrow \mathcal{A} \otimes N$$
satisfying the Type $D$ condition
$$(\mu \otimes \mathbb{I}_N) \circ (\mathbb{I}_{\mathcal{A}} \otimes \delta_1) \circ \delta_1
 =0,$$
where $\mu: \mathcal{A} \otimes \mathcal{A} \rightarrow \mathcal{A}$ denotes the multiplication on $\mathcal{A}$.

On the Type $D$ structure $N$, we define maps
$$\delta_k: N \rightarrow \mathcal{A}^{\otimes k} \otimes N$$
inductively by
\begin{align*}
\delta_0 &= \mathbb{I}_N \\
\delta_i &= (\mathbb{I_{\A}}^{\otimes i-1} \otimes \delta_1) \circ \delta_{i-1}.
\end{align*}
We say that $N$ is \emph{bounded} if there exists an integer $n$ such that $\delta_i=0$ for all $i >n$. 

Given $M$ and $N$ as above, the \emph{box tensor product} $M \boxtimes N$ is the $\F$-vector space
$$M \otimes N,$$
endowed with the differential
$$\partial^{\boxtimes} ({x} \otimes {y}) = \sum_{k=0}^{\infty} (m_{k+1} \otimes \mathbb{I}_N)({x} \otimes \delta_k({y})).$$
If at least one of $M$ or $N$ is bounded, then the above sum is guaranteed to be finite.

The above definitions can be suitably modified if one would like to work over a \emph{differential} graded algebra instead of merely a graded algebra; see \cite[Chapter 2]{LOT} or \cite[Section 2.1]{Levine}.

\subsection{Bordered Heegaard Floer homology}
We assume the reader is familiar with Heegaard Floer homology for closed $3$-manifolds. See, for example, the expository overview \cite{OSsurvey}. We begin with an overview of the invariants associated to $3$-manifolds with parameterized boundary, as defined by Lipshitz, Ozsv{\'a}th and Thurston in \cite{LOT}. Let $Y$ be a closed $3$-manifold and let $F$ be an abstract closed surface together with a null homologous embedding in $Y$. Decompose $Y$ along $F$ into pieces $Y_1$ and $Y_2$ such that $\partial Y_1 \cong - \partial Y_2 \cong F$. 
In particular, we have an orientation preserving diffeomorphism from $F$ to $\partial Y_1$, and an orientation reversing diffeomorphism from $F$ to $\partial Y_2$. 
A $3$-manifold with a diffeomorphism (up to isotopy) from a standard surface to its boundary is called a \emph{bordered $3$-manifold}, and we call this isotopy class of diffeomorphisms a \emph{marking} of the boundary. To the closed surface $F$, we associate a differential graded algebra $\A (F)$. To $Y_1$, we associate the invariant $\widehat{CFA}(Y_1)$, which will be a right $\A_{\infty}$-module over the algebra $\A(F)$, while to $Y_2$ we associate the invariant $\widehat{CFD}(Y_2)$, which will be a Type $D$ structure. To a knot $K_1$ in $Y_1$, we may associate either $\widehat{CFA}(Y_1, K_1)$, a filtered $\A_{\infty}$-module, or $CFA^-(Y_1, K_1)$, an $\A_\infty$-module over $\A(F)$ with ground ring $\F[U]$, where $U$ is a formal variable.

The pairing theorems of \cite[Theorems 1.3 and 11.21]{LOT} state that there exists a homotopy equivalence between $\widehat{CF}(Y)$ and the box tensor product of $\widehat{CFA}(Y_1)$ and $\widehat{CFD}(Y_2)$:
$$\widehat{CF}(Y) \simeq \widehat{CFA}(Y_1) \boxtimes \widehat{CFD}(Y_2).$$
We may also consider the case where we have a knot $K_1 \subset Y_1$ such that upon gluing $Y_1$ and $Y_2$, we obtain a null-homologous knot $K \subset Y=Y_1 \cup_F Y_2$.
In this case, we have the following homotopy equivalence of $\Z$-filtered chain complexes:
$$\widehat{CFK}(Y, K) \simeq \widehat{CFA}(Y_1, K_1) \boxtimes \widehat{CFD}(Y_2),$$
and the following homotopy equivalence of $\F[U]$-modules:
$$gCFK^-(Y, K) \simeq CFA^-(Y_1, K_1) \boxtimes \widehat{CFD}(Y_2),$$
where $gCFK^-(K)$ denotes the associated graded object of $CFK^-(K)$. 
Note that the information contained in the $\Z$-filtered chain complex $\widehat{CFK}(Y, K)$ is equivalent to that in the $\F[U]$-module $gCFK^-(K)$; we discuss these invariants in more detail in Subsection \ref{subsec:CFK}. Similar pairing theorems hold when we have a knot $K_2 \subset Y_2$. 

In this paper, we will use these tools to study cabling. Thus, we will restrict ourselves to the case where $F$ is a torus. To use the bordered Heegaard Floer package to study the $(p, pn+1)$-cable of a knot $K$, we will let $Y_1$ be a solid torus equipped with a $(p, 1)$-torus knot, and let $Y_2$ be the bordered manifold $S^3 - \mathrm{nbd } \ K$ with the parametrization specified by the meridian and an $n$-framed longitude. 

We will now describe the algebra $\A(F)$, the modules $\widehat{CFA}(Y_1)$ and $\widehat{CFD}(Y_2)$, and the box tensor product, all in case of $F=T^2$. When $F=T^2$, $\A(F)$ is merely a graded algebra, while when $g(F) \geq 2$, it is a differential graded algebra. At the end of this subsection, we note the modifications needed in the more general case.

To specify the identification of $T^2$ with $\partial Y_1$ and $-\partial Y_2$, we need to identify a meridian and a longitude of the torus. One way to do this is to specify a handle decomposition for the surface, that is, a disk with two $1$-handles attached such that the resulting boundary is connected and can be capped off with a disk. For technical reasons, we also place a basepoint somewhere along the boundary of the disk.

Schematically, we represent this information by a \emph{pointed matched circle} $(\mathcal{Z}, z, \{a_1, a_3\}, \{a_2, a_4\})$, which we think of as the boundary of the disk with labeled points at the feet of the $1$-handles. In this case, the pointed matched circle $\mathcal{Z}$ consists of a circle with five marked points: $a_1$, $a_2$, $a_3$, $a_4$, and $z$, in that order as we traverse the circle in the clockwise direction.
The points $a_1$ and $a_3$ are the endpoints of the arc $\alpha^a_1$, and the points $a_2$ and $a_4$ are the endpoints of the arc $\alpha^a_2$. The $\alpha$-arcs represent the cores of the $1$-handles.

The $\alpha$-arcs represent the cores of the $1$-handles, where the arc $\alpha^a_1$ has endpoints at $a_1$ and $a_3$, and the arc $\alpha^a_2$ has endpoints at $a_2$ and $a_4$.

\begin{figure}[htb!]
\subfigure[]
{
\labellist
\small \hair 2pt
\pinlabel $z$ at 59 2
\pinlabel $\mathcal{Z}$ at 8 30
\pinlabel $\alpha^a_1$ at 24 99
\pinlabel $\alpha^a_2$ at 97 97
\pinlabel $a_1$ at 35 72
\pinlabel $a_2$ at 52 80
\pinlabel $a_3$ at 72 80
\pinlabel $a_4$ at 86 72
\endlabellist
\centering
\includegraphics[scale=1.2]{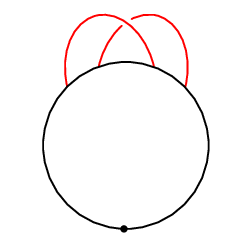}
\label{fig:pointedmatchedcirclea}
}
\hspace{10pt}
\subfigure[]
{
\labellist
\small \hair 2pt
\pinlabel $z$ at 54 111
\pinlabel $\alpha^a_1$ at 14 34
\pinlabel $\alpha^a_2$ at 13 82
\pinlabel $\rho_1$ at 57 38
\pinlabel $\rho_2$ at 57 60
\pinlabel $\rho_3$ at 57 82
\endlabellist
\centering
\includegraphics[scale=1.2]{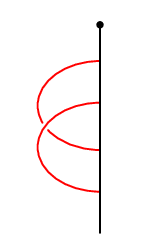}
\label{fig:pointedmatchedcircleb}
}
\vspace{4pt}
\caption{Above left, the pointed matched circle for the surface $T^2$. Above right, the same pointed matched circle cut open at $z$.}
\label{fig:pointedmatchedcircle}
\end{figure}

To the surface $T^2$ parametrized by the pointed matched circle $\mathcal{Z}$, we associate a graded algebra, $\mathcal{A}(T^2)$. The algebra $\mathcal{A}(T^2)$ is generated over $\F$ by the two idempotents
$$\iota_1 \quad \textup{and} \quad \iota_2,$$
and the six ``Reeb'' elements
$$\rho_1, \ \rho_2, \ \rho_3, \ \rho_{12}, \ \rho_{23}, \ \rho_{123}.$$

\begin{figure}[htb!]
\centering
\labellist
\hair 2pt
\pinlabel $\iota_1:$ at 105, 290
\pinlabel $\iota_2:$ at 250, 290
\pinlabel $\rho_1:$ at 35, 170
\pinlabel $\rho_2:$ at 175, 170
\pinlabel $\rho_3:$ at 315, 170
\pinlabel $\rho_{12}:$ at 35, 50
\pinlabel $\rho_{23}:$ at 175, 50
\pinlabel $\rho_{123}:$ at 315, 50
\endlabellist
\includegraphics[scale=0.9]{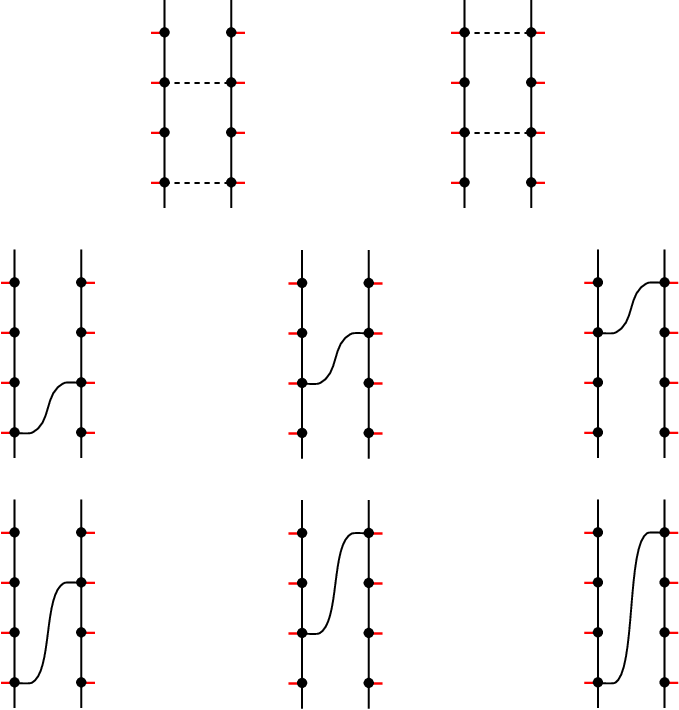}
\vspace{4pt}
\caption[The idempotents and algebra elements]{The idempotents and algebra elements.}
\label{fig:algebra}
\vspace{10pt}
\end{figure}

The idempotents correspond to $\alpha^a_1$ and $\alpha^a_2$, respectively. We will often need to consider the \emph{ring of idempotents},
$$\cI=\F\langle \iota_1 \rangle \oplus \F \langle \iota_2 \rangle.$$
We have the following compatibility conditions with the idempotents:
\begin{equation*}
\left.
\begin{array}{cccc}
&\rho_1=\iota_1 \rho_1 =\rho_1 \iota_2 \qquad &\rho_2=\iota_2 \rho_2 = \rho_2 \iota_1 \qquad &\rho_3=\iota_1 \rho_3 = \rho_3 \iota_2 \\
&\rho_{12}=\iota_1 \rho_{12} = \rho_{12} \iota_1 \qquad &\rho_{23}=\iota_2 \rho_{23} = \rho_{23} \iota_2 \qquad &\rho_{123}=\iota_1 \rho_{123} = \rho_{123} \iota_2,
\end{array} \right.
\end{equation*}
and the following non-zero products:
$$\rho_1\rho_2=\rho_{12} \qquad \rho_2\rho_3=\rho_{23} \qquad \rho_1\rho_2\rho_3=\rho_1\rho_{23}=\rho_{12}\rho_3=\rho_{123}.$$
\noindent These algebra elements may be understood pictorially, as in Figure \ref{fig:algebra}, where multiplication is understood to correspond to concatenation.
We will let $\rho_1$  refer to the arc of $\mathcal{Z} - \{z\}$ between $a_1$ and $a_2$, $\rho_2$ the arc between $a_2$ and $a_3$, and $\rho_3$ the arc between $a_3$ and $a_4$. Similarly, $\rho_{12}$, $\rho_{23}$ and $\rho_{123}$ will refer to the appropriate concatenations.

This completes the description of the algebra $\mathcal{A}(T^2)$. For the full description of the algebra associated to a surface of genus $g$, see Chapter 3 of \cite{LOT}.

A \emph{bordered Heegaard diagram} for a $3$-manifold $Y$ with $\partial Y =T^2$ is a tuple $(\overline{\Sigma}, \boldsymbol{\alpha}^c, \boldsymbol{\alpha}^a, \boldsymbol{\beta}, z)$ consisting of the following:
\begin{itemize}
	\item a compact, oriented surface $\overline{\Sigma}$ of genus $g$ with a single boundary component, $\partial \overline{\Sigma}$
	\item a $(g-1)$-tuple of pairwise disjoint circles $\boldsymbol{\alpha}^c=(\alpha_1^c, \ldots, \alpha_{g-1}^c)$ in the interior of $\overline{\Sigma}$
	\item a pair of disjoint arcs  $\boldsymbol{\alpha}^a=(\alpha_1^a, \alpha_2^a)$ in $\overline{\Sigma} \backslash \boldsymbol{\alpha}^c$ with endpoints on $\partial \overline{\Sigma}$
	\item a $g$-tuple of pairwise disjoint circles  $\boldsymbol{\beta}=(\beta_1, \ldots, \beta_g)$ in the interior of $\overline{\Sigma}$
	\item a basepoint $z$ on $\partial \overline{\Sigma} \backslash \partial{\boldsymbol{\alpha}}^a$.
\end{itemize}
Let $\boldsymbol{\alpha}$ denote $\boldsymbol{\alpha}^c \cup \boldsymbol{\alpha}^a$. 
We further require that all intersections of $\alpha$-curves with $\beta$-curves be transverse and that $\overline{\Sigma} \backslash \boldsymbol{\alpha}$ and $\overline{\Sigma} \backslash \boldsymbol{\beta}$ are connected. We also require that the basepoint is chosen so that the data $(\partial \overline{\Sigma},\ z,\ \partial \alpha^a_1,\ \partial \alpha^a_2)$ describes a pointed matched circle.

For bordered Heegaard diagrams, there are two types of periodic domains, and hence two notions of admissibility. Consider closed domains $\mathcal{P}$ in $\overline {\Sigma}$ whose interiors consist of linear combinations of connected components in $\overline{\Sigma} \backslash (\boldsymbol{\alpha}, \boldsymbol{\beta})$. We call $\mathcal{P}$ a \emph{periodic domain} if $\partial \mathcal{P}$ consists of a collection of $\alpha$-arcs, $\alpha$-circles, $\beta$-circles, and arcs in $\partial \overline{\Sigma}$, with $z \notin \partial \mathcal{P}$.
We call $\mathcal{P}$ a \emph{provincial periodic domain} if $\partial \mathcal{P}$ consists of a collection of full $\alpha$-circles and $\beta$-circles, with $z \notin \partial \mathcal{P} $.  Notice that this implies that $\mathcal{P}$ is not adjacent to $\partial \overline{\Sigma}$.

We say a bordered Heegaard diagram is \emph{provincially admissible} if every provincial periodic domain has both positive and negative multiplicities. A bordered Heegaard diagram is \emph{admissible} if every periodic domain has both positive and negative multiplicities. Note that every admissible bordered Heegaard diagram is provincially admissible. Provincial admissibility is sufficient for the bordered invariants to be well-defined, and admissibility is sufficient for the bordered invariants to be bounded. We will return to this point in more detail later in this section.

To construct a $3$-manifold with parameterized boundary, $Y$, from the data $(\overline{\Sigma}, \boldsymbol{\alpha}^c, \boldsymbol{\alpha}^a, \boldsymbol{\beta}, z)$, we attach $2$-handles to $\overline{\Sigma} \times [0, 1]$ along $\boldsymbol{\alpha}^c \times \{0 \}$ and $\boldsymbol{\beta} \times \{1 \}$. The parametrization of the boundary of $Y$ is given by the identification of $( \partial \overline{\Sigma},\ z,\ \partial \alpha^a_1,\ \partial \alpha^a_2 ) \times \{\frac{1}{2}\}$ with the pointed matched circle $\mathcal{Z}$.

Let $\H$ be a bordered Heegaard diagram for $Y$. We will now describe the invariants $\widehat{CFD}(\H)$ and $\widehat{CFA}(\H)$. Both modules are generated over $\F$ by $\mathfrak{S}(\H)$, the set of unordered $g$-tuples of intersection points of $\alpha$- and $\beta$-curves such that
\begin{itemize}
	\item each $\beta$-circle is occupied exactly once
	\item each $\alpha$-circle is occupied exactly once
	\item each $\alpha$-arc is occupied at most once.
\end{itemize}
In the case we are considering, where $\partial Y = T^2$, notice that these conditions imply that exactly one of the $\alpha$-arcs is occupied.

To define the Type $D$ structure $\widehat{CFD}(\H)$, we identify $(\partial \overline{\Sigma},\ z,\ \partial \alpha^a_1,\ \partial \alpha^a_2 )$ with $-\mathcal{Z}$.
Let $\widehat{CFD}(\H)$, or simply $\widehat{CFD}$, denote the $\F$-vector space generated by $\mathfrak{S}(\H)$, with the left $\cI$-action on $\bfx \in \mathfrak{S}(\mathcal{H})$ defined to be
 \begin{equation*}
 \iota_1 \cdot \mathbf{x}= \left\{
\begin{array}{ll}
\mathbf{x} & \text{if } \mathbf{x} \text{ does \emph{not} occupy the arc } \alpha^a_1\\
0 & \text{otherwise}
\end{array} \right.
\end{equation*}
\begin{equation*}
\iota_2 \cdot  \mathbf{x}= \left\{
\begin{array}{ll}
\mathbf{x} & \text{if } \mathbf{x} \text{ does \emph{not} occupy the arc } \alpha^a_2\\
0 & \text{otherwise.}
\end{array} \right.
\end{equation*}
We define maps
$$\delta_1: \widehat{CFD} \rightarrow \A \otimes \widehat{CFD}$$
by counting certain pseudo-holomorphic curves. Let $\Sigma$ denote $\mathrm{Int}\ \overline{\Sigma}$. Define a \emph{decorated source} $S^{\triangleright}$ to be a topological type of smooth surface $S$ with boundary and a finite number of boundary punctures endowed with
\begin{itemize}
	\item a labeling of each puncture by one of $-$, $+$, or $e$
	\item  a labeling of each $e$ puncture of $S$ by a Reeb chord $\rho$.
\end{itemize}
Consider the $4$-manifold $\Sigma \times [0,1] \times \R$, with the following projection maps:
\begin{align*}
\pi_{\Sigma}&: \Sigma \times [0,1] \times \R \rightarrow \Sigma \\
\pi_{D}&: \Sigma \times [0,1] \times \R \rightarrow [0,1] \times \R \\
\pi_I &: \Sigma \times [0,1] \times \R \rightarrow [0,1]\\
\pi_{\R} &: \Sigma \times [0,1] \times \R \rightarrow \R.
\end{align*}
Let $\Sigma_{\overline{e}}$ denote $\Sigma$ with its puncture filled in. Similarly, let $S_{\overline{e}}$ denote $S$ with its $e$ punctures filled in.

We are interested in proper maps 
$$u: (S, \partial S) \rightarrow \big(\Sigma \times [0, 1] \times \R, (\boldsymbol{\alpha} \times \{ 1\} \times \R) \cup (\boldsymbol{\beta} \times \{ 0\} \times \R) \big)$$
such that 
\begin{itemize}
	\item At each $-$-puncture $q$ of $S$, $\mathrm{lim}_{z\rightarrow q}(\pi_{\R} \circ u)(z)=-\infty$.
	\item At each $+$-puncture $q$ of $S$, $\mathrm{lim}_{z\rightarrow q}(\pi_{\R} \circ u)(z)=+\infty$.
	\item At each $e$ puncture $q$ of $S$, $\mathrm{lim}_{z\rightarrow q}(\pi_{\Sigma} \circ u)(z)$ is the Reeb chord $\rho$ labeling $q$.
	\item $\pi_{\Sigma} \circ u$ does not cover the region of $\Sigma$ adjacent to $z$. 
	\item The map $u$ is proper and extends to a proper map $u_{\overline{e}} : S_{\overline{e}} \rightarrow \Sigma_{\overline{e}} \times [0, 1] \times \R$.
	\item The map $\pi_D \circ u_{\overline{e}}$ is a $g$-fold branched cover.
	\item For each $t \in \R$ and each $i=1, \ldots, g$, there is exactly one point in $u^{-1}( \beta_i \times \{0\} \times \{t \})$. Similarly, for each  $t \in \R$ and each $i=1, \ldots, g-1$, there is exactly one point in $u^{-1}( \alpha^c_i \times \{1\} \times \{t \})$. For each $t \in \R$ and $i=1,2$, there is at most one point in $u^{-1}( \alpha^a_i \times \{1\} \times \{t \})$.
\end{itemize}
We also require the map $u$ to be $J$-holomorphic and of finite energy in the appropriate sense. See Chapter $5$ of \cite{LOT}.

The map $\pi_{\R} \circ u_{\overline{e}}$ gives an ordering on the $e$ punctures, and their respective labels; this is induced by the $\R$-coordinate of their images. We denote the resulting sequence of Reeb chords by $\overrightarrow{\boldsymbol{\rho}}$.

We let $\mathcal{M}^B (\bfx, \bfy, \overrightarrow{\boldsymbol{\rho}}) $ denote a certain reduced moduli space. Roughly, this moduli space consists of curves from a decorated source $S^{\triangleright}$ with asymptotics corresponding to $\overrightarrow{\boldsymbol{\rho}}$ and in the homology class $B \in \pi_2(\mathbf{x}, \mathbf{y})$, where $\pi_2(\mathbf{x}, \mathbf{y})$ is the set of homology classes of curves connecting $\mathbf{x}$ to $\mathbf{y}$. The expected dimension  of $\mathcal{M}^B (\bfx, \bfy, \overrightarrow{\boldsymbol{\rho}})$ is given by the formula $\textup{dim}\, \mathcal{M}^B (\bfx, \bfy, \overrightarrow{\boldsymbol{\rho}}) = \textup{ind} (B,  \overrightarrow{\boldsymbol{\rho}})-1$.

The map $\delta_1$ is defined as

$$\delta_1(\mathbf{x})= \sum_{\mathbf{y} \in \mathfrak{S}(\mathcal{H})} \sum_{\substack{B \in \pi_2(\mathbf{x}, \mathbf{y}) \\ \{\overrightarrow{\boldsymbol{\rho}} | \ \mathrm{ind}(B, \overrightarrow{\boldsymbol{\rho}})=1\} }} \# \big( \mathcal{M}^B (\bfx, \bfy, \overrightarrow{\boldsymbol{\rho}})\big)  \rho_{i_1}\cdot \ldots \cdot \rho_{i_n} \otimes \mathbf{y},$$
where $\overrightarrow{\boldsymbol{\rho}}=(-\rho_{i_1}, \ldots, -\rho_{i_n})$ and $\# \big( \mathcal{M}^B (\bfx, \bfy, \overrightarrow{\boldsymbol{\rho}})\big)$ is the number of points, modulo $2$, in the zero-dimensional moduli space $ \mathcal{M}^B (\bfx, \bfy, \overrightarrow{\boldsymbol{\rho}})$.
Provincial admissibility implies that the sum is well-defined.

As in the theory for closed $3$-manifolds, there is a combinatorial formula to compute the index of a map, in terms of the Euler measure and local multiplicities of $B$, as well as the behavior at the boundary. See Proposition $5.62$ of \cite{LOT}.

Recall the definition of the maps $\delta_k$ for $k>1$ from Section \ref{subsec:Ainfty}.
The Type $D$ structure $\widehat{CFD}$ is \emph{bounded} if there exists an integer $N$ such that $\delta_i=0$ for all $i >N$. Lemma $6.5$ of \cite{LOT} tells us that if $\mathcal{H}$ is admissible, then $\widehat{CFD}(\mathcal{H})$ is bounded.

We now define $\widehat{CFA}$. 
We identify $(\partial \overline{\Sigma},\ z,\ \partial \alpha^a_1,\ \partial \alpha^a_2 )$ with $\mathcal{Z}$. As an $\F$-vector space, $\widehat{CFA}$ is generated by $\mathfrak{S}(\mathcal{H})$, with the right $\cI$-action defined to be
 \begin{equation*}
 \mathbf{x} \cdot \iota_1= \left\{
\begin{array}{ll}
\mathbf{x} & \text{if } \mathbf{x} \text{ \emph{does} occupy the arc } \alpha^a_1\\
0 & \text{otherwise}
\end{array} \right.
\end{equation*}
\begin{equation*}
 \mathbf{x} \cdot \iota_2= \left\{
\begin{array}{ll}
\mathbf{x} & \text{if } \mathbf{x} \text{ \emph{does} occupy the arc } \alpha^a_2\\
0 & \text{otherwise.}
\end{array} \right.
\end{equation*}
The $\A_{\infty}$-structure on $\widehat{CFA}$ is defined by counting certain pseudoholomorphic curves, giving maps
$$m_{j+1}: \widehat{CFA} \otimes \A^{\otimes j} \rightarrow \widehat{CFA},$$
defined to be
\begin{align*}
m_{j+1}(\mathbf{x}, \rho_{i_1}, \ldots, \rho_{i_j}) &= \sum_{\mathbf{y} \in \mathfrak{S}(\mathcal{H})} \sum_{\substack{B \in \pi_2(\mathbf{x}, \mathbf{y}) \\ \mathrm{ind}(B, \overrightarrow{\boldsymbol{\rho}})=1}} \# \big( \mathcal{M}^B (\bfx, \bfy, \overrightarrow{\boldsymbol{\rho}})\big)  \mathbf{y}\\
m_2(\mathbf{x}, 1) &= \mathbf{x} \\
m_{j+1}(\mathbf{x}, \ldots, 1, \ldots) &= 0, \quad j>1,
\end{align*}
where $\overrightarrow{\boldsymbol{\rho}}=(\rho_{i_1}, \ldots, \rho_{i_j})$. As in the case of $\widehat{CFD}$, provincial admissibility of the Heegaard diagram guarantees that the above sum is well-defined. Recall that $\widehat{CFA}$ is \emph{bounded} if there exists an integer $N$ such that $m_{j}=0$ for all $j > N$. If $\H$ is admissible, then $\widehat{CFA}$ is bounded.

The tensor product $\widehat{CFA} \boxtimes \widehat{CFD}$ is the $\F$-vector space
$$\widehat{CFA} \otimes_\mathcal{I} \widehat{CFD},$$
equipped with the differential
$$\partial^{\boxtimes} (\mathbf{x} \otimes \mathbf{y}) = \sum_{k=0}^{\infty} (m_{k+1} \otimes \mathbb{I}_{\widehat{CFD}})(\mathbf{x} \otimes \delta_k(\mathbf{y})).$$
If at least one of $\widehat{CFA}$ and $\widehat{CFD}$ is bounded, then the above sum is guaranteed to be finite.
The pairing theorem of \cite[Theorem 1.3]{LOT} states that we have the following homotopy equivalence
\[ \widehat{CF}(Y_1 \cup Y_2 ) \simeq \widehat{CFA}(Y_1) \boxtimes \widehat{CFD}(Y_2), \]
where $\partial Y_1$ and $-\partial Y_2$ are identified via their respective markings.

The following description of the tensor product in terms of a basis for $\A(T^2)$ is often useful for calculations. Define $\rho_{\emptyset}$ to be $\iota_1 + \iota_2=1$. Then we can rewrite $\delta_1$ as
$$\delta_1=\sum_i \rho_i \otimes D_i,$$
where the sum is taken over $i \in \{ \emptyset, 1, 2, 3, 12, 23, 123 \}$, and  the $D_i$ are coefficient maps
$$D_i: \widehat{CFD} \rightarrow \widehat{CFD}.$$
The tensor product $\widehat{CFA} \boxtimes \widehat{CFD}$ is still the $\F$-vector space 
$$\widehat{CFA} \otimes_\mathcal{I} \widehat{CFD},$$
with the differential now given by
$$\partial^{\boxtimes}(\mathbf{x}\otimes \mathbf{y})=\sum m_{k+1}(\mathbf{x}, \rho_{i_1}, \ldots, \rho_{i_k})D_{i_k}\circ \ldots \circ D_{i_1}(\mathbf{y}),$$
where the sum is taken over all $k$-element sequences $i_1, \ldots, i_k$ (including the empty sequence when $k=0$) of elements in $\{ \emptyset, 1, 2, 3, 12, 23, 123 \}$. We will use this form of the box tensor product in the proof of our main theorem.

We conclude this subsection by highlighting a few of the differences that occur when $\partial Y$ has genus at least $2$.

A bordered Heegaard diagram for a $3$-manifold $Y$ with parameterized boundary $F$ of genus $k$ consists of a punctured surface $\overline \Sigma$ of genus $g$, a $g$-tuple of $\beta$-circles, a $(g-k)$-tuple of $\alpha$-circles, a $2k$-tuple of $\alpha$-arcs, and a basepoint $z$ on $\partial \overline \Sigma$, such that $\overline \Sigma \backslash \boldsymbol {\alpha}$ and $\overline \Sigma \backslash \boldsymbol {\beta}$ are connected (where $\boldsymbol{\alpha}$ denotes the collection of $\alpha$-arcs and -circles, and $\boldsymbol{\beta}$ denotes the collection of $\beta$-circles). Notice that when 
$g(F)\geq 2$, there is a not a unique parametrization (i.e. handle decomposition) of the diffeomorphism type of $F$, hence there is not a unique algebra associated to the diffeomorphism type of the surface $F$. In other words, for different handle decompositions of $\F$, we get different algebras.

Furthermore, while in the case of torus boundary, $\A(T^2)$ is a simply a graded algebra, the algebra associated to a parameterized surface of genus $2$ or higher is a differential graded algebra. See Chapter 3 of \cite{LOT} for a detailed description of these algebras.

In the case of torus boundary, each generator $\bfx$ occupies exactly one $\alpha$-arc, hence for a map $u$,  we have at most one Reeb chord occurring at any given time (i.e., the $\R$-coordinate of its image). Thus, the map $u$ allows us to consider a sequence of Reeb chords. In the general case, it is possible for multiple Reeb chords to occur at the same time, so $u$ induces a sequence of \emph{sets} of Reeb chords instead. See Chapter $5$ of \cite{LOT} for a complete description, or Section $5$ of Zarev \cite{Zarev} for a description of the analogous construction in the bordered sutured case.

\subsection{The knot Floer complex}
\label{subsec:CFK}
We assume the reader is familiar with the various flavors of the knot Floer complex, defined by Ozsv\'ath and Szab\'o in \cite{OSknots} and independently by Rasmussen in \cite{R}. For an expository overview of these invariants, we again refer the reader to \cite{OSsurvey}. We specify a knot $K \subset S^3$ by a doubly pointed Heegaard diagram, $\mathcal{H}=(\Sigma, \boldsymbol{\alpha}, \boldsymbol{\beta}, w, z)$, where $w$ and $z$ are each basepoints in the complement of the $\alpha$- and $\beta$-circles. The chain complex $CFK^-(K)$ is freely generated over $\F[U]$ by the set of $g$-tuples of intersection points between the $\alpha$- and $\beta$-circles, where each $\alpha$- and each $\beta$-circle are used exactly once, and $g$ is the genus of the surface $\Sigma$. The differential is defined as
$$\partial \mathbf{x} = \sum_{\mathbf{y} \in \mathfrak{S}(\mathcal{H})} \sum_{\substack{\phi \in \pi_2(\mathbf{x}, \mathbf{y}) \\ \mathrm{ind}(\phi)=1}} \#\widehat{\mathcal{M}}(\phi) \  U^{n_w(\phi)} \cdot {\mathbf{y}}.$$

This complex has a homological $\Z$-grading, called the \emph{Maslov grading M}, as well as a $\Z$-filtration, called the \emph{Alexander filtration A}. The relative Maslov and Alexander gradings are defined as follows. Given $\mathbf{x}$ and $\mathbf{y}$ in $\mathfrak{S}(\mathcal{H})$, and a domain $\phi \in \pi_2(\mathbf{x}, \mathbf{y})$, we have
$$M(\mathbf{x})-M(\mathbf{y})=\mathrm{ind}(\phi)-2n_w(\phi) \qquad \mathrm{and}  \qquad A(\mathbf{x})-A(\mathbf{y})=n_z(\phi)-n_w(\phi).$$
The relative Alexander grading of a linear combination $\sum_i \mathbf{x}_i$ of generators is defined to be $ \textup{max}\{ A(\mathbf{x}_i) \}$.
The differential, $\partial$, decreases the Maslov grading by one, and respects the Alexander filtration; that is,
$$M(\partial \mathbf{x})=M(\mathbf{x})-1\qquad \mathrm{and} \qquad A(\partial \mathbf{x}) \leq A(\mathbf{x})$$
Multiplication by $U$ shifts the Maslov grading and respects the Alexander filtration as follows:
$$M(U \cdot \mathbf{x})=M(\mathbf{x})-2\qquad \mathrm{and} \qquad A(U \cdot \mathbf{x}) = A(\mathbf{x})-1.$$
Setting $U=0$, we obtain the filtered chain complex $\widehat{CFK}(K)=CFK^-(K)/(U=0)$. The total homology of $\widehat{CFK}(K)$ is isomorphic to $\widehat{HF}(S^3)\cong \F$. The normalization for the Maslov grading is chosen so that the generator for $\widehat{HF}(S^3)$ lies in Maslov grading zero. We denote the homology of the associated graded object of $\widehat{CFK}(K)$ by 
$$\widehat{HFK}(K)=\bigoplus_s \widehat{HFK}(K,s),$$
where $s$ indicates the Alexander grading induced by the filtration. We denote the associated graded object of $CFK^-(K)$ by $gCFK^-(K)$, and the homology of the associated graded object by
$$HFK^-(K)=\bigoplus_s HFK^-(K, s).$$
We normalize the Alexander grading so that
$$\mathrm{min} \{ s \ | \ \widehat{HFK}(K, s) \neq 0\} = - \mathrm{max} \{ s \ | \ \widehat{HFK}(K, s) \neq 0 \}.$$
Equivalently, we can define the absolute Alexander grading of a generator $\mathbf x$ to be
$$A(\mathbf{x}) = \tfrac12 \langle c_1(\underline{\mathfrak{s}}(\mathbf{x})), [\widehat{F}] \rangle,$$
where $\widehat{F}$ is obtained by capping off a Seifert surface for $K$ in the $0$-surgery, $\underline{\mathfrak{s}}\big(\mathbf{x}) \in {\mathrm{Spin}^c}(S^3_0(K)\big)$ denotes the $\mathrm{Spin}^{c}$ structure over $S^3_0(K)$ associated to the generator $\mathbf{x}$ by the basepoints $w$ and $z$, and $ c_1(\underline{\mathfrak{s}}(\mathbf{x}))$ is the  first Chern class of $\underline{\mathfrak{s}}(\mathbf{x})$; see \cite[Section 3]{OSknots}. 

At times, we will consider the closely related complex
$$CFK^{\infty}(K)=CFK^{-}(K)\otimes_{\F[U]} \F[U, U^{-1}],$$
which is naturally a $\Z \oplus \Z$-filtered chain complex, with one filtration induced by the Alexander filtration and the other, which we call the $w$-grading, given by the negative of the $U$-exponent, i.e., $w(U^n \cdot \bfx)=-n$. The Alexander filtration is induced by the basepoint $z$ while the $w$-filtration is induced by the basepoint $w$.
 It is often convenient to view $CFK^{\infty}(K)$ and $CFK^-(K)$ graphically in the $(i, j)$-plane, suppressing the homological grading from the picture, where the $i$-coordinate corresponds to the $w$-grading, and the $j$-coordinate corresponds to the Alexander grading. An element of the form $U^{n} \cdot \bfx$ is plotted at the coordinate $(-n, A(U^{n} \cdot \bfx))$, or equivalently, $(-n, A(\bfx)-n)$. In particular, the complex $CFK^-(K)$ is contained in the part of the $(i, j)$-plane with $i \leq 0$, and a generator $\bfx$ of $CFK^{-}(K)$ has coordinates $(0, A(\bfx))$.

We may denote the differential by arrows which will necessarily point non-strictly downwards and to the left; that is, if $\bfy$ appears in the $\partial \bfx$ with non-zero multiplicity, we place an arrow from $\bfx$ to $\bfy$. We say that $(i' , j') \leq (i, j)$ if $i' \leq i$ and $j' \leq j$. Given $S \subset \Z \oplus \Z$, we let $C\{S\}$ denote the set of elements in $CFK^{\infty}(K)$ whose $(i, j)$-coordinates are in $S$. If $S$ has the property that $(i, j) \in S$ implies that $(i', j')\in S$ for all $(i' , j') \leq (i, j)$, then $C\{S\}$ is naturally a subcomplex. Similarly, for appropriate $S$, $C\{S\}$ may inherit the structure of a quotient complex, or of a subquotient complex. For example, $\widehat{CFK}(K)$ is the subquotient complex $C\{i=0\}$, that is, the quotient of the complex $C\{i \leq 0\}$ by $C\{i < 0 \}$.

The integer-valued smooth concordance invariant $\tau(K)$ is defined in \cite{OS4ball} to be
$$\tau(K) = \mathrm{min} \{ s \ | \ \iota: C\{ i=0, j \leq s \} \rightarrow C\{ i=0 \} \textup{ induces a non-trivial map on homology} \},$$
where $\iota$ is the natural inclusion of chain complexes. Alternatively, $\tau(K)$ may be defined in terms of the $U$-action on $HFK^-(K)$, as in \cite[Appendix A]{OST}:
$$\tau(K)= -\textup{max} \{ s \ | \  \exists \ [\xi] \in HFK^-(K, s) \textup{ such that } \forall \ d \geq 0, \ U^d\cdot[\xi]\neq 0 \}.$$
(More generally, the $\Z$-filtered chain complex $\widehat{CFK}(K)$ contains the same information at the $U$-module $HFK^-(K)$.)

Recall that the complex $CFK^{\infty}(K)$ is doubly filtered, by the Alexander filtration, and by powers of $U$. Taking the degree zero part of the associated graded object with respect to the Alexander filtration, we define the \emph{horizontal complex}, 
$$C^{\mathrm{horz}}=C\{j=0\},$$
equipped with the induced differential, $\partial^{\mathrm{horz}}$. Graphically, this can be viewed as the subquotient complex of $CFK^{\infty}(K)$ consisting of elements with $j$-coordinate equal to zero, with the induced differential consisting of horizontal arrows pointing non-strictly to the left. The horizontal complex inherits the structure of a $\Z$-filtered chain complex, with the filtration induced by the $w$-grading. Similarly, we may consider the degree zero part of the associated graded object with respect to the filtration by powers of $U$, and define the \emph{vertical complex},
$$C^{\mathrm{vert}}=C\{i=0\},$$
equipped with a differential, $\partial^{\mathrm{vert}}$. Note that this is equivalent to $CFK^-(K)/(U \cdot CFK^-(K))\cong \widehat{CFK}(K).$ In the vertical complex, the induced differential may be graphically depicted as vertical arrows pointing non-strictly downwards. The vertical complex inherits the structure of a $\Z$-filtered chain complex, with the filtration induced by the Alexander filtration.

Symmetry properties of $CFK^{\infty}(K)$ from \cite[Section 3.5]{OSknots} show that both $C^{\mathrm{horz}}$ and $C^{\mathrm{vert}}$ are filtered chain homotopy equivalent to $\widehat{CFK}(K)$. (In fact, if we ignore grading and filtration shifts, any row or column is filtered chain homotopic to $\widehat{CFK}(K)$.) More generally, $CFK^{\infty}(K)$ is filtered chain homotopic to the complex obtained by reversing the roles of $i$ and $j$. The filtered chain homotopy type of $\widehat{CFK}(K)$, $CFK^-(K)$, and $CFK^{\infty}(K)$ are all invariants of the knot $K$.

The chain complex $CFK^-(K)$ is called \emph{reduced} if the differential $\partial$ strictly drops either the Alexander filtration or the filtration by powers of $U$. Graphically, this means that each arrow points strictly downwards or to the left (or both). A filtered chain complex is always filtered chain homotopic to a reduced complex, i.e., it is filtered chain homotopic to the $E_1$ page of its associated spectral sequence.

Let $C_{a,b}$ denote the subcomplex $C\{ i \leq a, j \leq b \}$.
A basis $\{x_i \}$ for a $\Z \oplus \Z$-filtered chain complex $(C, \partial)$ is called a \emph{filtered basis} if the set $\{x_i \ | \ x_i \in C_{a,b} \}$ is a basis for $C_{a,b}$ for all pairs $(a, b)$. Two filtered bases can be related by a filtered change of basis. For example, given a filtered basis $\{ x_i \}$, replacing $x_j$ with $x_j +x_k$, where the filtration level of $x_k$ is less than or equal to that of $x_j$, is a filtered change of basis. More generally, we may consider a doubly filtered chain complex with two doubly filtered bases, related by a doubly filtered change of basis.

We say a filtered basis $\{ x_i \}$ over $\F[U]$ for a reduced complex $CFK^-(K)$ is \emph{vertically simplified} if for each basis element $x_i$, exactly one of the following holds:
\begin{itemize}
	\item $x_i$ is in the image of $\partial^{\textup{vert}}$ and there exists a unique basis element $x_{i-1}$ such that $\partial^{\textup{vert}} x_{i-1} =x_i$.
	\item $x_i$ is in the kernel, but not the image, of $\partial^{\textup{vert}}$.
	\item $x_i$ is not in the kernel of $\partial^{\textup{vert}}$, and $\partial^{\textup{vert}} x_i=x_{i+1}$.
\end{itemize}
(In the statements above, we are considering the basis that $\{ x_i \}$ naturally induces on $C^{\textup{vert}}$; that is, $\{x_i  \ \mathrm{mod} \big(U\cdot CFK^-(K)\big)\}$. For ease of exposition, we suppress this from the notation.)
When $\partial^{\textup{vert}} x_i=x_{i+1}$, we say that there is a vertical arrow from $x_i$ to $x_{i+1}$, and the length of this arrow is $A(x_i)-A(x_{i+1})$. Notice that upon taking homology, the differential $\partial^{\mathrm{vert}}$ cancels basis elements in pairs.
Since $H_*(C^{\mathrm{vert}}) \cong \F$, there is a distinguished element, which after reordering we denote $x_0$, with the property that it has no incoming or outgoing vertical arrows.

Similarly, we define what it means for a filtered basis $\{ x'_i \}$ over $\F[U]$ for the reduced complex $CFK^-(K)$ to be horizontally simplified. Notice that $\{ U^{m_i} \cdot  x'_i \}$, where $m_i = A(x'_i)$, naturally induces a basis on $C^{\textup{horz}}$. We say the basis  $\{ x'_i \}$ is \emph{horizontally simplified} if for each basis element $x'_i$, exactly one of the following holds:
\begin{itemize}
	\item $U^{m_i} \cdot   x'_i$ is in the image of $\partial^{\textup{horz}}$ and there exists a unique basis element $x'_{i-1}$ such that $\partial^{\textup{horz}} U^{m_{i-1}} \cdot  x'_{i-1} =U^{m_i} \cdot  x'_i$.
	\item $U^{m_i} \cdot  x'_i$ is in the kernel, but not the image, of $\partial^{\textup{horz}}$.
	\item $U^{m_i} \cdot  x'_i$ is not in the kernel of $\partial^{\textup{horz}}$, and $\partial^{\textup{horz}} U^{m_i} \cdot x'_i=U^{m_{i+1}} \cdot x'_{i+1}$.
\end{itemize}
When $\partial^{\textup{horz}} U^{m_i} \cdot  x'_i= U^{m_{i+1}} \cdot  x'_{i+1}$, we say that there is a horizontal arrow from $x'_i$ to $x'_{i+1}$, and the length of this arrow is $A(x'_{i+1})-A(x'_{i})$. When taking homology, the differential $\partial^{\mathrm{horz}}$ cancels basis elements in pairs.
Since $H_*(C^{\mathrm{horz}}) \cong \F$, there is a distinguished element, which after reordering we denote $x'_0$, with the property that it has no incoming or outgoing horizontal arrows.

The following technical fact, proven at the end of this section, will be of use to us:

\begin{lemma}
\label{lem:simplifiedbasis}
$CFK^-(K)$ is $\Z \oplus \Z$-filtered, $\Z$-graded homotopy equivalent to a chain complex $C$ that is reduced. Moreover, one can find a vertically simplified basis over $\F[U]$ for $C$, or, if one would rather, a horizontally simplified basis over $\F[U]$ for $C$.
\end{lemma}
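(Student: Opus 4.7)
The plan is to prove the lemma in two stages: first, reduce $CFK^-(K)$ to a $\Z \oplus \Z$-filtered, $\Z$-graded chain homotopy equivalent complex $C$ with no length-zero arrows; and second, perform a doubly filtered change of basis on $C$ to produce a vertically (or horizontally) simplified $\F[U]$-basis.

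For the reduction, I would apply the standard cancellation lemma in the $\Z \oplus \Z$-filtered category. If the differential on $CFK^-(K)$ contains a term $x \mapsto y$ that strictly drops neither filtration, then $x$ and $y$ occupy the same $(i,j)$-bidegree with Maslov gradings differing by $1$; one may delete the pair $\{x,y\}$ and compose the remaining differentials to obtain a smaller filtered chain homotopy equivalent complex. Since the total number of $\F[U]$-generators is finite and strictly decreases at each cancellation, the process terminates after finitely many steps, producing a reduced complex $C$.

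For the vertical simplification, I would fix a filtered $\F[U]$-basis $\{y_i\}$ of $C$ and consider the induced $\F$-basis on the vertical complex $C^{\mathrm{vert}} = C\{i=0\}$, whose differential $\partial^{\mathrm{vert}}$ strictly drops the Alexander filtration. Over the field $\F$, any chain complex decomposes as a direct sum of trivial summands $\F$ and acyclic two-step summands $\F \xrightarrow{\cong} \F$. I would execute this decomposition by Gaussian elimination on the matrix of $\partial^{\mathrm{vert}}$, proceeding inductively from the highest Alexander grading downward. Each elementary move replaces a basis element $y_i$ by $y_i + y_j$ for some $y_j$ with $A(y_j) \leq A(y_i)$, which is a valid doubly filtered change of basis on $C$ when lifted by $\F[U]$-linearity, since $y_j$ lives at $(0, A(y_j))$, componentwise no larger than $(0, A(y_i))$. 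After finitely many moves, $\partial^{\mathrm{vert}}$ is in the prescribed normal form, yielding a vertically simplified basis for $C$. The horizontally simplified case follows by the symmetric argument, interchanging the roles of $i$ and $j$ (or, if preferred, via the $\Z \oplus \Z$-filtered symmetry of $CFK^{\infty}(K)$ recalled earlier).

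The main obstacle is to verify that the simplification on $C^{\mathrm{vert}}$ can be arranged purely via $U=0$ changes of basis (so the lift is $\F[U]$-linear and doubly filtered on $C$) and that it does not reintroduce length-zero arrows into $\partial$. The first point is handled by the filtered Gaussian elimination above. For the second, observe that if $y_i$ is replaced by $y_i + y_j$ with $A(y_j) \leq A(y_i)$, then each arrow emanating from $y_j$ already strictly drops some filtration, and when reinterpreted as an arrow from $y_i$ it continues to strictly drop the same filtration, so reducedness is preserved throughout the basis-change process.
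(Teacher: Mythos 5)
Your overall architecture (filtered cancellation to get a reduced complex, then filtered Gaussian elimination on $C^{\mathrm{vert}}$, lifted $\F[U]$-linearly) matches the paper's in spirit, and your attention to the lift and to preserving reducedness is fine. But there is a genuine gap in the second stage: the order in which you run the Gaussian elimination. You prescribe ``proceeding inductively from the highest Alexander grading downward'' and then assert that every elementary move has the form $y_i \mapsto y_i + y_j$ with $A(y_j) \leq A(y_i)$. That assertion is not delivered by your ordering. Consider a reduced vertical complex with three generators $a$, $c$, $b$ at Alexander gradings $2$, $1$, $0$ and $\partial^{\mathrm{vert}} a = \partial^{\mathrm{vert}} c = b$. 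Starting from the top means cancelling the pair $(a,b)$ first, and clearing the $b$-entry of $\partial^{\mathrm{vert}} c$ then forces the move $c \mapsto c + a$ with $A(a) > A(c)$, which is not a filtered change of basis. (The filtered solution here is to cancel $(c,b)$ first and replace $a$ by $a+c$.)

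The correct ordering, which is the actual content of the paper's proof, is by \emph{arrow length} rather than by the grading of the source: one inducts on $n$ over the sets $B_n = \{x_i \mid A(\partial x_i) = A(x_i) - n\}$, splitting off all length-$n$ arrows only after every $B_m$ with $m<n$ has been made into a direct summand. With that inductive hypothesis, whenever $\partial_k x_\ell \neq 0$ for the target $x_k$ of a length-$n$ arrow from $x_j$, the generator $x_\ell$ must itself carry an arrow of length at least $n$ into $x_k$ (all shorter arrows into $x_k$ having already been isolated in summands), so $A(x_\ell) \geq A(x_k) + n = A(x_j)$ and the move $x_\ell \mapsto x_\ell + x_j$ is filtered. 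Your write-up is missing exactly this mechanism; you also never specify, when several generators hit the same target, which pair gets cancelled (it must be the shortest arrow). Incidentally, the paper folds your first stage into the same induction: the length-zero arrows are the set $B_0$, which is split off as an acyclic summand and discarded, so no separate reduction step is needed. Replacing your top-down sweep with the induction on arrow length repairs the argument.
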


\subsection{From the knot Floer complex to the bordered invariant}
Theorems $11.27$ and $A.11$ of \cite{LOT} give an algorithm for computing $\widehat{CFD}(Y)$ for an $n$-framed knot complement $Y=S^3 - \mathrm{nbd} \ K$ from $CFK^-(K)$. More precisely, we frame the knot complement by letting $\alpha^a_1$ correspond to an $n$-framed longitude, and $\alpha^a_2$ to a meridian. We recount the algorithm from $CFK^-$ to $\widehat{CFD}$ here.

Let $\{ x_i \}$ be a vertically simplified basis for $CFK^-(K)$. We identify $\iota_1 \widehat{CFD}(Y)$ with $\widehat{CFK}(K)$.
 For each arrow of length $\ell$ from $x_i$ to $x_{i+1}$ we introduce a string of basis elements $y^i_1, \ldots, y^i_{\ell}$ for $\iota_2\widehat{CFD}(Y)$ and differentials
$$x_i \overset{D_{1}}{\longrightarrow} y^i_1 \overset{D_{23}}{\longleftarrow} \ldots  \overset{D_{23}}{\longleftarrow} y^i_k  \overset{D_{23}}{\longleftarrow}  y^i_{k+1}  \overset{D_{23}}{\longleftarrow} \ldots  \overset{D_{23}}{\longleftarrow} y^i_{\ell}  \overset{D_{123}}{\longleftarrow} x_{i+1}.$$
Note the directions of the arrows.
Similarly, let $\{x'_i\}$ be a horizontally simplified basis for $CFK^-(K)$. Then we again identify $\iota_1 \widehat{CFD}(Y)$ with $\widehat{CFK}(K)$.
 For each arrow of length $\ell$ from $x'_i$ to $x'_{i+1}$ we introduce a string of basis elements $w^i_1, \ldots, w^i_{\ell}$ for $\iota_2\widehat{CFD}(Y)$ and differentials
$$x'_i \overset{D_{3}}{\longrightarrow} w^i_1 \overset{D_{23}}{\longrightarrow} \ldots  \overset{D_{23}}{\longrightarrow} w^i_k  \overset{D_{23}}{\longrightarrow}  w^i_{k+1}  \overset{D_{23}}{\longrightarrow} \ldots  \overset{D_{23}}{\longrightarrow} w^i_{\ell}  \overset{D_{2}}{\longrightarrow} x'_{i+1}.$$ 
Finally, there is the \emph{unstable chain}, consisting of generators $z_1, \ldots, z_m$ connecting $x_0$ and $x'_0$. The form of the unstable chain depends on the framing $n$ relative to $2\tau(K)$. 
When $n<2\tau(K)$, we introduce a string of basis elements $z_1, \ldots, z_m$ for $\iota_1\widehat{CFD}(Y)$, where $m=2\tau(K)-n$, and differentials
$$x_0  \overset{D_{1}}{\longrightarrow} z_1 \overset{D_{23}}{\longleftarrow} z_2 \overset{D_{23}}{\longleftarrow} \ldots \overset{D_{23}}{\longleftarrow} z_m \overset{D_{3}}{\longleftarrow} x'_0.$$
When $n=2\tau(K)$, the unstable chain has the form
$$x_0 \overset{D_{12}}{\longrightarrow} x'_0.$$
Lastly, when $n>2\tau(K)$, the unstable chain has the form
$$x_0   \overset{D_{123}}{\longrightarrow} z_1  \overset{D_{23}}{\longrightarrow} z_2 \overset{D_{23}}{\longrightarrow} \ldots \overset{D_{23}}{\longrightarrow} z_m \overset{D_{2}}{\longrightarrow} x'_0,$$
where $m=n-2\tau(K)$.

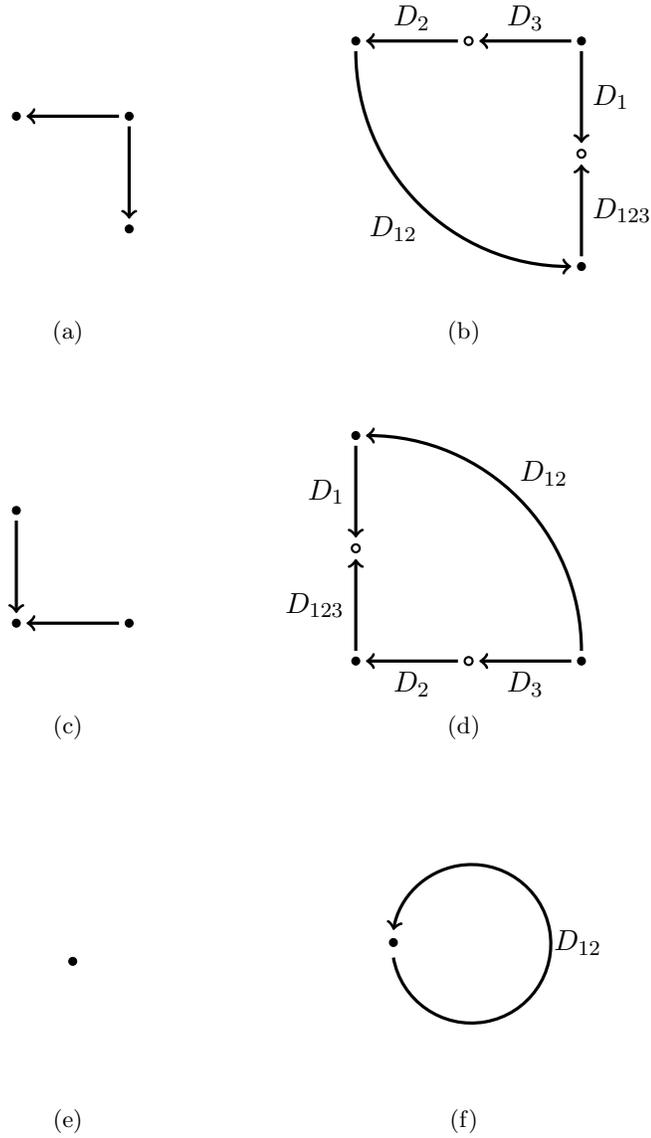
\begin{figure}[htb!]
\centering
\vspace{.5cm}

\subfigure[]{
\begin{tikzpicture}

        \useasboundingbox (-1, -1) rectangle (2.5, 2.5);
        
        \filldraw (1.5, 0) circle (1.5pt) node[] (a){};
        \filldraw (1.5, 1.5) circle (1.5pt) node[] (b){};
        \filldraw (0, 1.5) circle (1.5pt) node[] (c){};

	\draw [very thick, <-] (a) -- (b);
	\draw [very thick, <-] (c) -- (b);
	
\end{tikzpicture}
\label{fig:CFKRHT}
}
\hspace{1cm}
\subfigure[]{
\begin{tikzpicture}

	\useasboundingbox (-0.5, -0.5) rectangle (3.5, 3.5);
	
        \filldraw (0, 3) circle (1.5pt) node[] (a) {};
        \draw [thick] (1.5, 3) circle (1.5pt) node[] (ab) {};
        \filldraw (3, 3) circle (1.5pt) node[] (b) {};
        \draw [thick] (3, 1.5) circle (1.5pt) node[] (bc) {};        
        \filldraw (3, 0) circle (1.5pt) node[]  (c) {};

         	\draw [very thick, <-] (a) -- (ab) node[midway, above] {$D_{2}$};
	\draw [very thick, <-] (ab) -- (b) node[midway, above] {$D_{3}$};
         	\draw [very thick, ->] (b) -- (bc) node[midway, right] {$D_{1}$};
	\draw [very thick, ->] (c) -- (bc) node[midway, right] {$D_{123}$};
	\draw [very thick, ->] (a) to [out=270, in=180] (c);
	\draw (0.5, 0.5) node {$D_{12}$};

\end{tikzpicture}
\label{fig:CFDRHT}
}

\vspace{.5cm}
\subfigure[]{
\begin{tikzpicture}

        \useasboundingbox (-1, -1) rectangle (2.5, 2.5);
        
        \filldraw (1.5, 0) circle (1.5pt) node[] (a){};
        \filldraw (0, 0) circle (1.5pt) node[] (b){};
        \filldraw (0, 1.5) circle (1.5pt) node[] (c){};

	\draw [very thick, ->] (a) -- (b);
	\draw [very thick, ->] (c) -- (b);
		   
\end{tikzpicture}
\label{fig:CFKLHT}
}
\hspace{1cm}
\subfigure[]{
\begin{tikzpicture}
	\useasboundingbox (-0.5, -0.5) rectangle (3.5, 3.5);
	\filldraw (3, 0) circle (1.5pt) node[] (a){};
	\filldraw (0, 0) circle (1.5pt) node[] (b){};
	\filldraw (0, 3) circle (1.5pt) node[] (c){};
	\draw [thick] (1.5, 0) circle (1.5pt) node[] (ab){};
         \draw [thick] (0, 1.5) circle (1.5pt) node[] (cb){};
         	\draw [very thick, ->] (a) -- (ab) node[midway, below] {$D_{3}$};
	\draw [very thick, ->] (ab) -- (b) node[midway, below] {$D_{2}$};
         	\draw [very thick, ->] (c) -- (cb) node[midway, left] {$D_{1}$};
	\draw [very thick, ->] (b) -- (cb) node[midway, left] {$D_{123}$};
	\draw [very thick, ->] (a) to [out=90, in=0] (c); 
	\draw (2.5, 2.5) node[] {$D_{12}$};
\end{tikzpicture}
\label{fig:CFDLHT}
}

\vspace{.5cm}
\subfigure[]{
\begin{tikzpicture}

        \useasboundingbox (-1, -1) rectangle (2.5, 2.5);
        \filldraw (0.75, 0.75) circle (1.5pt) node[] (a){};
		   
\end{tikzpicture}
\label{fig:CFK41}
}
\hspace{1cm}
\subfigure[]{
\begin{tikzpicture}

	\useasboundingbox (-0.5, -0.5) rectangle (3.5, 3.5);
	
	\filldraw (0.5, 1.5) circle (1.5pt) node[] (a){};
         
         \draw [very thick, ->] (0.5, 1.3) arc (-170:170:30pt);
         \draw (2.5, 1.5) node[right] (b)  {$D_{12}$};

\end{tikzpicture}
\label{fig:CFD41}
}

\caption[Optional caption for list of figures]{$CFK^{\infty}$ and $\widehat{CFD}$ for different knots and their framed complements, respectively. (To be precise, $CFK^{\infty}(K)$ is the above diagram tensored with $\F[U, U^{-1}]$.) Top, $K$ is the right-handed trefoil, and the framing on the complement is $2\tau(K)=2$.  Middle, $K$ is the left-handed trefoil, and the framing on the complement is $2\tau(K)=-2$. Bottom, $K$ is the unknot, and the framing is $2\tau(K)=0$. }
\label{fig:!!!!}
\end{figure}

We conclude this section with the proof of Lemma \ref{lem:simplifiedbasis}.

\begin{proof}[Proof of Lemma \ref{lem:simplifiedbasis}]
We need to show that we can find a basis over $\F[U]$ for $CFK^-(K)$ that is vertically simplified. What follows is essentially the well-known ``cancellation lemma'' for chain complexes in the filtered setting.

Let $\{x_i\}$ be a filtered basis (over $\F$) for $C^{\mathrm{vert}}$. For the remainder of the proof, we will let $\partial$ denote the differential on $C^{\mathrm{vert}}$. Consider the set
$$B_{n}=\{ x_i \ | \ A(\partial x_i)=A(x_i)-n\},$$
i.e., the set of elements in $C^{\mathrm{vert}}$ that have non-zero boundary and satisfy the above degree restriction.
We will prove the lemma by induction. Note that $B_{-1} = \emptyset$, since the differential $\partial$ respects the Alexander filtration. We say that $B_n$ is \emph{simplified with respect to the basis $\{x_i\}$} if $\textup{Span}(B_n \cup \partial B_n )$ is a direct summand of $C^{\mathrm{vert}}$ such that $\{ x_i \ | \ x_i \in B_n\} \cup  \{ \partial x_i \ |\ x_i \in B_n \}$ form a simplified basis for $\textup{Span}(B_n \cup \partial B_n )$.

Assume that  $B_0, B_1  \ldots, B_{n-1}$ are simplified with respect to $\{x_i\}$. We will find a change of basis from $\{x_i \}$ to $\{x'_i \}$ so that $B_{n}$ is simplified as well. If $\partial x_i =\sum c_j x_j$, then define
$$\partial_j x_i = c_j.$$
For $x_j \in B_n$, we would like to perform a change of basis such that $x'_j$ and $\partial x'_j$ are elements in the new basis and form a direct summand. We begin by noticing that
$$x_j \in B_{n}  \textup{ implies }  \exists \ k \textup{ such that } \partial_k x_j=1\textup{ and } A(x_k)=A(x_j)-n.$$
We now choose a new filtered basis $\{ x'_i\}$ as follows:
\begin{align*}
x'_j &= x_j \\
x'_k &= \partial x_j \\
x'_{\ell} &= x_{\ell} + (\partial_k x_{\ell})x_j, \quad \ell \neq j, k.
\end{align*}

\begin{claim}
\emph{This is a filtered change of basis.}

 Indeed, we have that $A(\partial x_j)=A(x_k)$ by construction.  Whenever $\partial_k x_{\ell}\neq 0$, we have that $A(x_{\ell})\geq A(x_k)+n$, by the assumption that $B_0, \ldots, B_{n-1}$ are simplified with respect to $\{x_i\}$; then $A(x_{\ell})\geq A(x_j)$, since $A(x_k)=A(x_j)-n$. Note that in this change of basis, whenever $x_\ell \in B_m \cup \partial B_m$ for $m<n$, we have that $x'_\ell = x_\ell$; in particular, the simplicity of the basis of $B_m, m<n$ is preserved. This completes the proof of the claim. See Figure \ref{fig:basisexample} for an example.
\end{claim}

\begin{figure}[bth!]
\centering
\labellist
\small \hair 2pt
\endlabellist
\subfigure
{
\begin{tikzpicture}

        \useasboundingbox (-1, -1) rectangle (2.5, 2.5);
        
        \filldraw (0, 0) circle (1.5pt) node[] (xm){};
        \filldraw (0.2, 1) circle (1.5pt) node[] (xk){};
        \filldraw (0.1, 2) circle (1.5pt) node[] (xj){};
        \filldraw (0.3, 2) circle (1.5pt) node[] (xl){};

	\draw [very thick, <-] (xm) .. controls (-0.1, 0.8) and (-0.1, 1.2) .. (xj);
	\draw [very thick, <-] (xk) -- (xj);
	\draw [very thick, <-] (xk) .. controls (0.38, 1.4) and (0.38, 1.6) .. (xl);
	
	\node [left] at (xm) {$x_m$};
	\node [right] at (xk) {$x_k$};
	\node [left] at (xj) {$x_j$};
	\node [right] at (xl) {$x_\ell$};
	
\end{tikzpicture}
}
\hspace{10pt}
\subfigure
{
\begin{tikzpicture}

        \useasboundingbox (-1, -1) rectangle (2.5, 2.5);
        
        \filldraw (0.2, 0) circle (1.5pt) node[] (xm){};
        \filldraw (0, 1) circle (1.5pt) node[] (xk){};
        \filldraw (0.1, 2) circle (1.5pt) node[] (xj){};
        \filldraw (0.3, 2) circle (1.5pt) node[] (xl){};

	\draw [very thick, <-] (xk) -- (xj);
	\draw [very thick, <-] (xm) .. controls (0.38, 0.8) and (0.38, 1.2) .. (xl);
	
	\node [left] at (xm) {$x'_m$};
	\node [left] at (xk) {$x'_k$};
	\node [left] at (xj) {$x'_j$};
	\node [right] at (xl) {$x'_\ell$};
	
\end{tikzpicture}
}
\caption{An example of the filtered change of basis from Lemma \ref{lem:simplifiedbasis}. Left, the basis before simplifying the arrow between $x_j$ and $x_k$. Right, after, where $x'_j=x_j$, $x'_k=x_k+x_m$, $x'_\ell=x_\ell+x_j$, and $x'_m=x_m$. The vertical height of the basis elements is meant to represent their filtration levels.}
\label{fig:basisexample}
\end{figure}
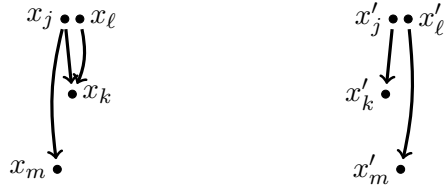

\begin{claim}
\emph{The pair $\{x'_j, \partial x'_j\}$ splits as a direct summand.}

If $\partial x'_i = \sum c'_j x'_j$, then similarly define $\partial'_j x'_i = c'_j$. We notice that
\begin{align*}
\partial '_j x'_{\ell} &= 0 \quad \forall \ \ell \\
\partial '_k x'_{\ell} &= 0 \quad \forall \ \ell \neq j.
\end{align*}
Indeed, that first equation follows from the facts that 
\begin{align*}
	\partial x'_j&=x'_k \\
	\partial_k'x'_m&=0 \ \forall \ m\neq j \\
	\partial^2&=0. 
\end{align*}
The second equation is true by construction of the basis $\{x'_i\}$. Notice that in this process, we have left $B_0, \ldots, B_{n-1}$ unchanged, and that we have not increased the size of $B_n$.
We have shown that $\{x'_j, x'_k\}$ splits as a direct summand, completing the proof of the claim.
\end{claim}

Iterating this process, we can continue to change bases until $B_n$ is simplified with respect to our new basis. 
By induction, we can construct a simplified basis for all of $C^{\mathrm{vert}}$, discarding the acyclic summand $B_0$ to obtain a reduced complex. Up to possible reordering, this basis consists of
\[  \{x_0\}  \cup \Big( \bigcup_{n\geq 1} \{ x_i \ | \ x_i \in B_n\} \cup  \{ \partial x_i \ |\ x_i \in B_n \}\Big) \]
where $x_0$ is a generator for $H_*(C^\mathrm{vert})$, i.e., an element that is in the kernel of the differential but not in the image.
This basis is also a basis for $CFK^-(K)$.

Similarly, we may find a simplified basis for $C^{\mathrm{horz}}$. This, too, will be a basis for $CFK^-(K)$.
\end{proof}

\section{Definition and properties of $\varepsilon(K)$}
\label{sec:epsilon}

We will begin by defining the invariant $\varepsilon(K)$, in terms of $\tau(K)$ and the invariant $\nu(K)$, defined by Ozsv{\'a}th and Szab{\'o} in \cite[Definition 9.1]{OSrational}. Let $(A_s, \partial_s)$ and $(A'_s, \partial'_s)$ be the following subquotient complexes of $CFK^{\infty}(K)$:
\begin{align*}
A_s &= C\{\mathrm{max}(i, j-s)=0\}\\
A'_s &= C\{\mathrm{min}(i, j-s)=0\},
\end{align*}
with induced differentials $\partial_s$ and $\partial'_s$, respectively. We consider both $A_s$ and $A'_s$ as complexes over $\F$. Notice that if $\{x_i\}$ is a homogeneous basis for $CFK^-(K)$, then $\{U^{n_i} \cdot x_i\}$ is a basis for $A_s$, where $n_i=\textup{max} (0, A(x_i)-s)$. Similarly, $\{U^{m_i}\cdot x_i\}$ will be a basis for $A'_s$, where $m_i=\textup{min}(0, A(x_i)-s)$. We have a map $v_s: A_s \rightarrow \widehat{CF}(S^3)$ defined as the following composition:
$$A_s \rightarrow C\{i=0, j \leq s\} \rightarrow C\{i=0\} \simeq \widehat{CF}(S^3)$$
where the first map consists of quotienting by $C\{ i<0, j=s \}$ and the second is inclusion. Similarly, we have a map $v'_s: \widehat{CF}(S^3) \rightarrow A'_s$ defined as the following composition:
$$\widehat{CF}(S^3) \simeq C\{i=0\} \rightarrow C\{i=0, j \geq s\} \rightarrow A'_s$$
where the first map is projection and the second is inclusion.

These definitions have geometric significance. Recall from \cite[Section 4]{OSknots} that for $N \in \mathbb{N}$ sufficiently large,
$$A_s\simeq \widehat{CF}(S_N^3(K), \mathfrak{s}_s)$$
where $S_N^3(K)$ denotes $N$-surgery along $K$, $|s|\leq N/2$, and $\mathfrak{s}_s$ denotes the restriction to $S^3_N(K)$ of the $\textup{Spin}^c$ structure  $\mathfrak{t}_s$ on the corresponding $2$-handle cobordism with the property that
$$\langle c_1(\mathfrak{t}_s), [\widehat{F}]\rangle +N=2s,$$
where $\widehat{F}$ denotes the capped off Seifert surface in the $4$-manifold. 
The $2$-handle cobordism from $S^3_N(K)$ to $S^3$ endowed with the $\textup{Spin}^c$ structure above induces a map
$$  \widehat{HF}(S_N^3(K), \mathfrak{s}_s) \rightarrow \widehat{HF}(S^3).$$
which is the same as the map induced on homology by $v_s$.

Similarly,
$$A'_s\simeq \widehat{CF}(S_{-N}^3(K), \mathfrak{s}_s),$$
and the map induced on homology by
$$v'_s:  \widehat{CF}(S^3) \rightarrow \widehat{CF}(S_{-N}^3(K), \mathfrak{s}_s)$$
agrees with the map induced by the $2$-handle cobordism from $S^3$ to $S_{-N}^3(K)$.

\begin{definition}
Define $\nu(K)$ by
$$\nu(K)=\mathrm{min} \{s \in \Z \ | \ v_s: A_s \rightarrow \widehat{CF}(S^3) \textrm{ induces a non-trivial map on homology}\}.$$
Similarly, define $\nu'(K)$ by
$$\nu'(K)=\mathrm{max} \{s \in \Z \ | \ v'_s: \widehat{CF}(S^3) \rightarrow A'_s \textrm{ induces a non-trivial map on homology}\}.$$
\end{definition}

\noindent The invariant $\nu(K)$ was first defined in \cite[Section 9]{OSrational}.

For ease of notation, we will often write $\tau$ for $\tau(K)$ when the meaning is clear from context.  Recall from \cite[Proposition 3.1]{OS4ball} that $\nu'(K)$ is equal to either $\tau-1$ or $\tau$. The idea is that if $s>\tau$, then $v'_s$ induces the trivial map on homology, because quotienting by $C\{i=0, j<s\}$ gives the zero map, and if $s<\tau$, then $v'_s$ induces a non-trivial map on homology, because a generator $x$ for $\widehat{HF}(S^3)$ must be supported in the $(i,j)$-coordinate $(0, \tau)$, thus $x$ is not a boundary in $A'_s$. Thus, we should focus on the map $v'_{\tau}$. In particular, 
\begin{itemize}
	\item $\nu'(K)=\tau -1$ if and only  if $v'_{\tau}$ is trivial on homology
	\item $\nu'(K)=\tau $ if and only if $v'_{\tau}$ is non-trivial on homology.
\end{itemize}
A similar argument shows that $\nu(K)$ is equal to either $\tau(K)$ or $\tau(K)+1$; in particular,
\begin{itemize}
	\item $\nu(K)=\tau$ if and only  if $v_{\tau}$ is non-trivial on homology
	\item $\nu(K)=\tau+1 $ if and only if $v_{\tau}$ is trivial on homology.
\end{itemize}
We now proceed to show that $v_\tau$ and $v'_\tau$ cannot both be trivial on homology. Roughly, the idea is that $v'_\tau$ is trivial on homology when the class $[x]$ generating $\widehat{HF}(S^3) \cong H_*(C\{i=0\})$ is in the image of the horizontal differential. Thus, $[x]$ must also be in the kernel of the horizontal differential, implying that $v_\tau$ is non-trivial. Similarly, $v_\tau$ is trivial on homology when $[x]$ is not in the kernel of the horizontal differential, hence $[x]$ is not in the image of the horizontal differential, implying that $v'_\tau$ is non-trivial.

The following lemma, and its proof, make this precise:

\begin{lemma}
\label{lem:basis}
If $\nu'(K)=\tau(K)-1$, then $\nu(K)=\tau(K)$, and there exists a horizontally simplified basis $\{ x_i \}$ for $CFK^-(K)$ such that, after possible reordering, there is a pair  of basis elements, $x_1$ and $x_2$, with the property that:
\begin{enumerate}
\item $x_2$ is the distinguished element of some vertically simplified basis.
\item $\partial^{\mathrm{horz}} U^{A(x_1)-A(x_2)}\cdot x_1= x_2$.
\end{enumerate}
Similarly, if $\nu(K)=\tau(K)+1$, then $\nu'(K)=\tau(K)$, and there exists a horizontally simplified basis $\{ x_i \}$ for $CFK^-(K)$ such that, after possible reordering, there is a pair  of basis elements, $x_1$ and $x_2$, with the property that:
\begin{enumerate}
\item $x_1$ is the distinguished element of some vertically simplified basis.
\item $\partial^{\mathrm{horz}} U^{A(x_1)-A(x_2)} \cdot x_1= x_2$.
\end{enumerate}
\end{lemma}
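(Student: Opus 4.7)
My plan is to translate the hypothesis $\nu'(K) = \tau(K) - 1$ into the algebraic statement that $\xi_0$ lies in the image of the horizontal differential on $CFK^-(K)$, and then to apply the filtered cancellation technique from the proof of Lemma~\ref{lem:simplifiedbasis} to assemble the desired horizontally simplified basis. I would begin by fixing a vertically simplified basis $\{\xi_i\}$ for $CFK^-(K)$ with distinguished element $\xi_0$, so that $\xi_0$ lies at bifiltration $(0,\tau)$ and $[\xi_0]$ generates $\widehat{HF}(S^3)$. The hypothesis is equivalent to the map $v'_\tau$ being trivial on homology, i.e., $\xi_0 = \partial'_\tau y$ for some $y$ in the L-shape $A'_\tau = C\{i=0,\, j \geq \tau\} \cup C\{j=\tau,\, i \geq 0\}$.

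Next I would exploit the structure of $A'_\tau$. Because arrows in $CFK^\infty$ weakly decrease both filtrations, the induced differential on $A'_\tau$ splits into purely vertical arrows inside the vertical strip and purely horizontal arrows inside the horizontal strip, with any arrival at the corner $(0,\tau)$ inherited from whichever strip contains the tail. Decomposing $y = y_V + y_H$ along this splitting, the requirement that $\partial'_\tau y$ vanish away from the corner forces $y_V$ to be a cycle of the vertical strip modulo the corner. Since $\xi_0$ is vertically distinguished, it does not lie in the image of $\partial^{\mathrm{vert}}$, so the corner contribution of $\partial'_\tau y_V$ cannot contain $\xi_0$; the $\xi_0$-component of $\partial'_\tau y$ at the corner must therefore come from $y_H$. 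Unwinding this at the chain level produces a basis element $\xi_a$ with $A(\xi_a) < \tau$ whose $CFK^-(K)$-differential $\partial \xi_a$ contains the purely $j$-preserving term $U^{\tau - A(\xi_a)}\, \xi_0$; that is, there is an honest horizontal arrow from $\xi_a$ to $\xi_0$ in $CFK^\infty$.

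I would then apply the filtered cancellation argument from the proof of Lemma~\ref{lem:simplifiedbasis} in the horizontal direction to construct a horizontally simplified basis $\{x_i\}$ for $CFK^-(K)$ in which $\xi_0$ persists as a basis element $x_2$, with a unique basis element $x_1$ (a controlled modification of $\xi_a$) satisfying $\partial^{\mathrm{horz}} U^{A(x_1) - A(x_2)} \cdot x_1 = x_2$; this gives property~(2). For property~(1), observe that $\xi_0 = x_2$ is a vertical cycle of $CFK^-(K)$ not in the image of $\partial^{\mathrm{vert}}$ (this is a basis-independent property of the chain element), so a second application of Lemma~\ref{lem:simplifiedbasis} in the vertical direction, starting from a basis that contains $\xi_0$, produces a vertically simplified basis having $\xi_0$ as its distinguished element. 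The identity $\nu(K) = \tau(K)$ now follows at no additional cost: because $\xi_0$ lies in the image of $\partial^{\mathrm{horz}}$, the identity $(\partial^{\mathrm{horz}})^2 = 0$ places it in the kernel as well, meaning $\xi_0$ has no outgoing horizontal arrow in $CFK^\infty$; hence $\xi_0$ is already a cycle in $A_\tau$, and its class under $v_\tau$ is the generator of $\widehat{HF}(S^3)$.

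The second half of the lemma, for $\nu(K) = \tau(K)+1$, follows from the completely symmetric argument obtained via the $i \leftrightarrow j$ filtered chain homotopy equivalence of $CFK^\infty$ in \cite[Section~3.5]{OSknots}: the triviality of $v_\tau$ on homology translates to $\xi_0$ not lying in the kernel of $\partial^{\mathrm{horz}}$, i.e., there is an outgoing horizontal arrow from $\xi_0$, and the same cancellation argument produces the basis in which $\xi_0 = x_1$ is the tail. The step I expect to be the main obstacle is the extraction in the second paragraph: carefully tracking which components of $\partial'_\tau y$ sit in which piece of the L-shape, and leveraging the vertically distinguished property of $\xi_0$ to force the $\xi_0$-contribution to come from $y_H$, is the core calculation. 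A secondary subtlety is that arranging $\xi_0$ as an actual basis element of the new horizontally simplified basis (not merely a homotopy representative) is what makes it possible to simultaneously demand that $\xi_0 = x_2$ be the distinguished element of some vertically simplified basis, and this relies on $\xi_0$ lying in the image of $\partial^{\mathrm{horz}}$ at the chain level.
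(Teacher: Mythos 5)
Your overall strategy is the same as the paper's (translate $\nu'(K)=\tau(K)-1$ into triviality of $v'_\tau$, pick a preimage $y$ of the vertically distinguished element in $A'_\tau$, split $y$ into its vertical-strip and horizontal-strip parts, extract a horizontal arrow into the corner, and then run the cancellation algorithm of Lemma \ref{lem:simplifiedbasis}), but there is a genuine gap at exactly the step you flag as a ``secondary subtlety.'' From the corner equation you may conclude only that the \emph{coefficient} of $\xi_0$ in $\partial^{\mathrm{horz}}y_H$ is $1$, i.e.\ that $\xi_0$ appears as \emph{one term} of the horizontal differential of some basis element. You then upgrade this to ``$\xi_0$ lies in the image of $\partial^{\mathrm{horz}}$ at the chain level,'' which does not follow. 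The corner component of $\partial'_\tau y$ is $\xi_0 = \pi\bigl(\partial^{\mathrm{vert}}y_V\bigr) + \pi\bigl(\partial^{\mathrm{horz}}y_H\bigr)$; while $\pi\bigl(\partial^{\mathrm{vert}}y_V\bigr)$ cannot contain $\xi_0$ (no incoming vertical arrows), it can perfectly well contain \emph{other} basis elements at the corner, namely heads of vertical arrows in Alexander grading $\tau$. So what is exactly a horizontal boundary is $\xi_0 + \pi\bigl(\partial^{\mathrm{vert}}y_V\bigr)$, not $\xi_0$ itself. This breaks two things downstream: (i) when you split off the arrow from $\xi_a$ in the cancellation algorithm, the new head basis element is the full chain $\partial^{\mathrm{horz}}U^{\cdot}\xi_a$, which is $\xi_0$ plus the extra corner terms, and these extra terms need not be removable by a ``controlled modification'' of $\xi_a$ (they need not themselves be horizontal boundaries); and (ii) your deduction $\partial^{\mathrm{horz}}\xi_0=0$ from $(\partial^{\mathrm{horz}})^2=0$, used to get $\nu(K)=\tau(K)$, is only valid for the honest boundary $\xi_0+\pi\bigl(\partial^{\mathrm{vert}}y_V\bigr)$.

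The paper's proof avoids this by exploiting the freedom in the statement that $x_2$ need only be the distinguished element of \emph{some} vertically simplified basis: it replaces $x_V$ by $\partial^{\mathrm{horz}}_{\tau}x_{i>0} = x_V + \partial^{\mathrm{vert}}x_{i=0}$, which is a filtered change of basis whose new element is still a vertical cycle generating $H_*\bigl(C\{i=0\}\bigr)$ with no incoming vertical arrows (it differs from $x_V$ by heads of vertical arrows), and only then runs the horizontal cancellation so that this modified element is the head $x_2$. Your proof becomes correct if you make the same move; as written, insisting that the original $\xi_0$ serve as $x_2$ is not justified. (A smaller point you should also address, which the paper handles by choosing $x'$ of minimal $i$-filtration: $\partial^{\mathrm{horz}}_\tau y_H$ could a priori have components at $i<0$, which are invisible in $A'_\tau$ but would obstruct interpreting the replacement as a change of basis over $\F[U]$.)
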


\begin{proof}
Let $x_V$ be the distinguished element of a vertically simplified basis, and let $\partial^{\mathrm{horz}}_s$ be the differential on $C\{ j=s \} \simeq C^{\mathrm{horz}}$. 

Since $\nu'(K)=\tau - 1$, for any chain $x$ that generates $H_*(C\{i=0\})$, we have that $[x]=0 \in H_*(A'_{\tau})$. Thus, there exists $x' \in A'_{\tau}$ such that $\partial'_{\tau} x'=x_V$, where $\partial'_\tau$ is the induced differential on $A'_\tau$, as defined earlier in this section. Moreover, we may choose $x'$ to be the chain of minimal $i$-filtration such that $\partial'_{\tau} x'=x_V$; this will be convenient to us later. (Note that the complex $A'_s$ inherits a natural $i$-filtration as a subquotient complex of $CFK^{\infty}$.)

We can write $x'$ as the sum of chains $x_{i=0}$ and $x_{i>0}$, where $ x_{i=0}\in C\{i=0, j>{\tau}\}$ and $x_{i>0} \in C\{i>0, j={\tau} \}$. Hence, 
\begin{itemize}
	\item $x_V+\partial^{\mathrm{vert}}x_{i=0}$ generates $H_*(C\{i=0\})$ 
	\item $\partial'_s x_{i>0}=x_V+\partial^{\mathrm{vert}}x_{i=0}$.
\end{itemize}
We notice that $[\partial_{\tau}^{\mathrm{horz}} x_{i>0}]$ is non-zero in both $H_*(A_{\tau})$ and $H_*(C\{ i=0 \})$. Thus, $\nu(K) \leq \tau(K)$, which implies that $\nu(K) = \tau(K)$.

We now need to find an appropriate horizontally simplified basis. Replace $x_V$ with $\partial_{\tau}^{\mathrm{horz}} x_{i>0}$. This is a filtered change of basis and this new basis element is still the distinguished element of a vertically simplified basis, since $[\partial_{\tau}^{\mathrm{horz}} x_{i>0}]$ generates $H_*(C\{ i=0 \})$.

Now apply the algorithm in Lemma \ref{lem:simplifiedbasis}, splitting off the arrow of length $n$ from $x_{i>0}$ to the new $x_V$ first when simplifying $B_n$. This will yield a horizontally simplified basis with the desired property.

The proof in the case that $\nu(K)=\tau(K)+1$ follows similarly.
\end{proof}

When both $v_\tau$ and $v'_\tau$ are non-trivial on homology, the class $[x]$ generating $\widehat{HF}(S^3)$, which we identify with $H_*(C\{i=0\})$, is in the kernel of the horizontal differential but not in the image. We make this more precise in the following lemma:

\begin{lemma}
\label{lem:epsilon0}
If $\nu(K)=\nu'(K)$, then there exists a vertically simplified basis $\{x_i\}$ for $CFK^-(K)$ such that the distinguished element, $x_0$, is also the distinguished element of a horizontally simplified basis for $CFK^-(K)$.
\end{lemma}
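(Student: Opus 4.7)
First I would observe that $\nu'(K) \in \{\tau(K)-1, \tau(K)\}$ and $\nu(K) \in \{\tau(K), \tau(K)+1\}$, so the hypothesis $\nu(K) = \nu'(K)$ forces both to equal $\tau(K)$, whence both $v_\tau$ and $v'_\tau$ are non-trivial on homology. The plan is to start with any vertically simplified basis $\{x_i\}$ with distinguished element $x_0$, use non-triviality of $v_\tau$ to replace $x_0$ by a suitable cycle $z \in A_\tau$, use non-triviality of $v'_\tau$ to show $U^\tau z$ is not a boundary in $C^{\mathrm{horz}}$, and finally invoke the horizontal version of Lemma~\ref{lem:simplifiedbasis} to produce the desired basis.

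For the replacement, $v_\tau$ non-trivial yields a cycle $z \in A_\tau$ with $v_\tau(z) = z_v$ representing the unique generator of $H_*(C\{i=0\}) \cong \F$; I would decompose $z = z_v + z_h$ with $z_v \in C\{i=0, j \leq \tau\}$ and $z_h \in C\{i<0, j=\tau\}$. Expanding in the original basis, $z = x_0 + \partial^{\mathrm{vert}}(\tilde z) + z_h$, where $\tilde z \in C\{i=0\}$ satisfies $z_v - x_0 = \partial^{\mathrm{vert}}(\tilde z)$. In a vertically simplified basis, the image of $\partial^{\mathrm{vert}}$ is spanned by basis elements that are \emph{not} $x_0$ (since $x_0$ is distinguished), and $z_h \in U \cdot CFK^-(K)$ is invisible modulo $U$; hence swapping $x_0$ for $z$ is a filtered change of basis that preserves vertical simplification, with $z$ as the new distinguished element. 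Because $z$ is a cycle in $A_\tau$, $\partial z$ must lie in $C\{i<0, j<\tau\}$ (the complement of $A_\tau$ in the region reached by $\partial$), so $\partial z$ has no $j=\tau$ components and $U^\tau z$ is a cycle in $C^{\mathrm{horz}}$.

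The main obstacle is showing $U^\tau z$ is not a boundary in $C^{\mathrm{horz}}$, which I would establish by contradiction. Shifting to $C\{j = \tau\}$, suppose $\partial^{\mathrm{horz}} \tilde w = x_0 + z_h$ (the $j = \tau$ part of $z$) for some $\tilde w \in C\{j=\tau\}$, and split $\tilde w = \tilde w_{i>0} + \tilde w_{i \leq 0}$. Since differentials weakly decrease $i$ and the reduced condition forbids self-arrows at $(0, \tau)$, both the $(0, \tau)$-component and the $\{i > 0, j = \tau\}$-components of $\partial^{\mathrm{horz}} \tilde w_{i \leq 0}$ vanish. Thus $\partial^{\mathrm{horz}} \tilde w_{i > 0}$ must equal $x_0$ at $(0, \tau)$ and $0$ on $\{i > 0, j = \tau\}$. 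But $\tilde w_{i > 0}$ lies in the horizontal arm of $A'_\tau$, so this yields $\partial_{A'_\tau} \tilde w_{i > 0} = x_0$ in $A'_\tau$, contradicting non-triviality of $v'_\tau$. With $U^\tau z$ now known to be a cycle in $C^{\mathrm{horz}}$ that is not a boundary, applying the horizontal analog of Lemma~\ref{lem:simplifiedbasis} to the basis $\{z, x_1, x_2, \ldots\}$ leaves $z$ untouched as the distinguished element of a horizontally simplified basis, completing the construction.
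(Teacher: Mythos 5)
Your proof is correct and takes essentially the same route as the paper's: both replace the distinguished vertical element $x_0$ by a cycle $x_H \in A_\tau$ hitting the generator (using $\nu(K)=\tau$), observe its $j=\tau$ part is a horizontal cycle, use $\nu'(K)=\tau$ to rule out its being a horizontal boundary, and then run the simplification algorithm of Lemma~\ref{lem:simplifiedbasis}. Your splitting of a putative primitive $\tilde w$ into $\tilde w_{i>0}+\tilde w_{i\leq 0}$ just makes explicit the step the paper states tersely as ``$x_H \notin \mathrm{Im}\,\partial'_\tau$ implies $x_H \notin \mathrm{Im}\,\partial^{\mathrm{horz}}_\tau$.''
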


\begin{proof}
Note that $\nu(K)=\nu'(K)$ implies that both are equal to $\tau(K)$. Let $\{x_i \}$ be a vertically simplified basis $\{x_i\}$ for $CFK^-(K)$ with distinguished element $x_0$, and let $\partial_s^\mathrm{horz}$ denote the differential on $C\{j=s\} \simeq C^{\mathrm{horz}}$. We have the following series of implications:
\begin{enumerate}
	\item $\nu(K)=\tau(K)$ implies there exists a chain $\ x_H \in A_{\tau}$ such that  $v_{{\tau}*}([x_H])=[x_0]$.
	\item Hence, $[x_H]\neq 0 \in H_*(A_{\tau} )$, and so $\partial_{\tau}  x_H=0$.
	\item $\partial^{\mathrm{horz}}_{\tau} x_H=0$ as well, since $\partial^{\mathrm{horz}}_{\tau} x_H = \partial_{\tau}  x_H/C\{i=0, j<{\tau} \}$.
\end{enumerate}
Notice that since $v_{{\tau}*}([x_H])=[x_0]$, it follows that $x_H$ must be equal to $x_0$ plus possibly some basis elements in the image of $\partial^{\textup{vert}}$ and some elements in $C\{i<0, j={\tau} \}$. Hence, we may replace our distinguished vertical element $x_0$ with $x_H$.

With $x_H$ as above,  $\nu'(K)=\tau(K)$ implies that $v'_{{\tau} *}([x_H])\neq 0 \in H_*(A'_{\tau})$. Thus, $x_H \notin \mathrm{Im} \ \partial'_{\tau} $.
Since  $x_H \notin \mathrm{Im} \ \partial'_{\tau} $, $x_H$ is not homologous in $C\{ j={\tau} \} \simeq C^\mathrm{horz}$ to anything of strictly lower filtration level. Moreover, $x_H \notin \textup{Im }\partial^{\textup{horz}}_{\tau}$. Therefore, applying the algorithm in Lemma \ref{lem:simplifiedbasis}, we may obtain a horizontally simplified basis for $CFK^-(K)$ with distinguished element $x_H$, which is also the distinguished element of a vertically simplified basis.
\end{proof}

We see that there are three different possibilities for the values of the pair $(\nu(K), \nu'(K))$: $(\tau(K), \tau(K)-1)$, $(\tau(K), \tau(K))$ or $(\tau(K)+1, \tau(K))$. This motivates the following definition: 

\begin{definition}
Define $\varepsilon(K)$ to be
$$\varepsilon(K)=2\tau(K)-\nu(K)-\nu'(K).$$
In particular, $\varepsilon(K)$ can take on the values $-1$, $0$, or $1$.
\end{definition}

\begin{remark} \emph{
By various symmetry properties of $CFK^{\infty}(K)$ (see \cite[Section 3.5]{OSknots}), we may equivalently define $\varepsilon(K)$ as
$$\varepsilon(K)=\big( \tau(K)-\nu(K) \big) - \big(\tau(\overline{K})-\nu(\overline{K}) \big),$$
where $\overline{K}$ denotes the mirror of $K$.
}
\end{remark}

\begin{figure}[thb!]
\centering
\vspace{.5cm}
\subfigure[$\varepsilon(K)=1$]{
\begin{tikzpicture}

	 \useasboundingbox (-0.5, -0.5) rectangle (2, 2);
        
        \filldraw (1.5, 0) circle (1.5pt) node[] (a){};
        \filldraw (1.5, 1.5) circle (1.5pt) node[] (b){};
        \filldraw (0, 1.5) circle (1.5pt) node[] (c){};

	\draw [very thick, <-] (a) -- (b);
	\draw [very thick, <-] (c) -- (b);
\end{tikzpicture}
}
\hspace{1cm}
\subfigure[$\varepsilon(K)=-1$]{
\begin{tikzpicture}
        \useasboundingbox (-0.5, -0.5) rectangle (2, 2);
        
        \filldraw (1.5, 0) circle (1.5pt) node[] (a){};
        \filldraw (0, 0) circle (1.5pt) node[] (b){};
        \filldraw (0, 1.5) circle (1.5pt) node[] (c){};

	\draw [very thick, ->] (a) -- (b);
	\draw [very thick, ->] (c) -- (b);
\end{tikzpicture}
}
\hspace{1cm}
\subfigure[$\varepsilon(K)=0$]{
\begin{tikzpicture}
        \useasboundingbox (-0.5, -0.5) rectangle (2, 2);
        
        \filldraw (1.5, 0) circle (1.5pt) node[] (a){};
        \filldraw (0, 0) circle (1.5pt) node[] (b){};
        \filldraw (0, 1.5) circle (1.5pt) node[] (c){};
        \filldraw (1.5, 1.5) circle (1.5pt) node[] (d){};
        
        \filldraw (1.8, 1.8) circle (1.5pt);

	\draw [very thick, ->] (a) -- (b);
	\draw [very thick, ->] (c) -- (b);
	\draw [very thick, ->] (d) -- (a);
	\draw [very thick, ->] (d) -- (c);	
        
\end{tikzpicture}
}
\caption[]{$CFK^{\infty}(K)$, for different knots $K$. (Technically, $CFK^\infty(K)$ is the above complex tensored with $\F[U, U^{-1}]$.) Above left, $K$ is the right-handed trefoil, which has $\varepsilon(K)=1$, and the unique generator with no incoming or outgoing vertical arrows lies at the head of a horizontal arrow. Center, $K$ is the left-handed trefoil, which has $\varepsilon(K)=-1$, and the unique generator with no incoming or outgoing vertical arrows lies at the tail of a horizontal arrow. Right, $K$ is the figure $8$ knot, which has $\varepsilon(K)=0$, and the unique generator with no incoming or outgoing vertical arrows also has no incoming or outgoing horizontal arrows.}
\label{fig:delta}
\end{figure}
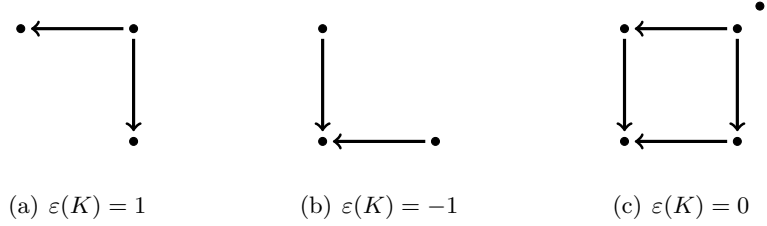

See Figure \ref{fig:delta} for examples of knots $K$ with different values of $\varepsilon(K)$. Recall that a knot $K$ is called \emph{homologically thin} if $\widehat{HFK}(K)$ is supported on a single diagonal with respect to the Alexander and Maslov gradings.

\begin{proposition} The following are properties of $\varepsilon(K)$:

\begin{enumerate}
\item \label{item:slice} If $K$ is smoothly slice, then $\varepsilon(K)=0$.
\vspace{5pt}
\item \label{item:tau0} If  $\varepsilon(K)=0$, then $\tau(K)=0$.
\vspace{5pt}
\item \label{item:mirror} $\varepsilon(\overline{K})=-\varepsilon(K)$.
\vspace{5pt}
\item \label{item:g} If $|\tau(K)|=g(K)$, then $\varepsilon(K)=\textup{sgn } \tau(K)$.
\vspace{5pt}
\item \label{item:homthin} If $K$ is homologically thin, then $\varepsilon(K)=\mathrm{sgn}\ \tau(K)$.
\vspace{5pt}
\item \label{item:connectedsum}
	\begin{enumerate}
		\item If $\varepsilon(K)=\varepsilon(K')$, then $\varepsilon(K \# K')=\varepsilon(K)=\varepsilon(K')$.
		\vspace{5pt}
		\item If $\varepsilon(K)=0$, then $\varepsilon(K \# K')=\varepsilon(K')$.
	\end{enumerate}
\end{enumerate}
\end{proposition}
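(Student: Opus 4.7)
The plan is to deduce properties (1)--(6) by combining the structural Lemmas \ref{lem:basis} and \ref{lem:epsilon0} with the $i\leftrightarrow j$ symmetry of $CFK^{\infty}$ and, for (6), with the K\"unneth description of $CFK^-$ of a connected sum. The unifying observation is that $\varepsilon(K)$ admits a clean bigraded characterization: Lemma \ref{lem:basis} says $\varepsilon(K)=+1$ precisely when the distinguished vertical generator is the head of a horizontal arrow, $\varepsilon(K)=-1$ precisely when it is the tail, and Lemma \ref{lem:epsilon0} says $\varepsilon(K)=0$ precisely when that generator may simultaneously serve as the distinguished horizontal generator.

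I would begin with (2), on which several later parts depend. Assume $\varepsilon(K)=0$ and let $x_0$ be the common distinguished element provided by Lemma \ref{lem:epsilon0}. As the distinguished vertical generator, $x_0\in C\{i=0\}$ has Alexander grading $A(x_0)=\tau(K)$. As the distinguished horizontal generator, $U^{A(x_0)}\cdot x_0\in C\{j=0\}$ sits at $i$-coordinate $-A(x_0)$; by the symmetry of $CFK^{\infty}$ from \cite[Section 3.5]{OSknots} exchanging the roles of $i$ and $j$, the generator of $H_*(C^{\mathrm{horz}})$ must sit at $i$-coordinate $\tau(K)$. Hence $-A(x_0)=\tau(K)$, which combined with $A(x_0)=\tau(K)$ forces $\tau(K)=0$. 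Property (3) then follows instantly from the alternative formula $\varepsilon(K)=\bigl(\tau(K)-\nu(K)\bigr)-\bigl(\tau(\overline{K})-\nu(\overline{K})\bigr)$ recorded in the remark above, which is manifestly antisymmetric under $K\leftrightarrow\overline{K}$. Property (1) follows from $\varepsilon(U)=0$ together with the concordance invariance of $\varepsilon$.

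For (4), item (2) already rules out $\varepsilon(K)=0$ when $\tau(K)=g(K)>0$. If $\varepsilon(K)=-1$, then Lemma \ref{lem:basis} produces a horizontally simplified basis in which the distinguished vertical element $x_0$ (with $A(x_0)=g$) is the source of a positive-length horizontal arrow to some $x_2$ with $A(x_2)>g$; but $g=g(K)$ is the top Alexander grading of $\widehat{HFK}(K)$, a contradiction, so $\varepsilon(K)=+1$. The case $\tau(K)=-g(K)$ follows by applying (3) to the mirror. For (5), I would observe that any horizontal arrow from $x_i$ to $x_{i+1}$ forces $M(x_{i+1})=M(x_i)+2\bigl(A(x_{i+1})-A(x_i)\bigr)-1$, because $\partial^{\mathrm{horz}}$ drops the Maslov grading by one; hence the source and target of any positive-length horizontal arrow differ in $\delta:=A-M/2$ by $\tfrac12$. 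Granting that a reduced $CFK^-$ for a thin knot admits a $\delta$-homogeneous basis, this rules out $\varepsilon=-1$ when $\tau(K)\geq 0$ and $\varepsilon=+1$ when $\tau(K)\leq 0$; combined with (2), this gives $\varepsilon(K)=\mathrm{sgn}\,\tau(K)$. Verifying $\delta$-homogeneity for reduced $CFK^-$ (not merely for $\widehat{HFK}$) is the most delicate bookkeeping and is the main obstacle I anticipate.

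For (6), I would use $CFK^-(K\#K')\cong CFK^-(K)\otimes_{\mathbb{F}[U]}CFK^-(K')$ together with the K\"unneth identification $H_*(\widehat{CFK}(K\#K'))\cong H_*(\widehat{CFK}(K))\otimes H_*(\widehat{CFK}(K'))$. Choose horizontally simplified bases $\{x_i\}$ and $\{y_j\}$ supplied by Lemmas \ref{lem:basis} and \ref{lem:epsilon0}. In case (a) with $\varepsilon(K)=\varepsilon(K')=+1$, the element $x_{-1}\otimes y_0$ maps horizontally to a $U$-multiple of $x_0\otimes y_0$, since the horizontal contribution of $\partial y_0$ vanishes in a simplified basis where $y_0$ is the head (not the tail) of its horizontal arrow; and $x_0\otimes y_0$ represents the distinguished vertical class for $K\#K'$, so $\varepsilon(K\#K')=+1$. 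The sign $-1$ case is parallel. In case (b) with $\varepsilon(K)=0$, Lemma \ref{lem:epsilon0} guarantees that $x_0$ has no horizontal arrows at all, so every horizontal arrow incident to $x_0\otimes y_0$ in the tensor complex is inherited from $y_0$, yielding $\varepsilon(K\#K')=\varepsilon(K')$.
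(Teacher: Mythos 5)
Your arguments for (2), (3), (4), and (6) are essentially sound. Items (2), (3), and (6) track the paper's own reasoning closely, and your proof of (4) is a legitimate alternative: rather than the paper's route (adjunction kills $H_*(C\{i<0,j=g\})$, so $v_{g(K)}$ is a quasi-isomorphism and $\nu(K)=\tau(K)$), you rule out $\varepsilon=-1$ by noting that Lemma \ref{lem:basis} would force a horizontal arrow out of the distinguished vertical element at Alexander grading $g$ into a basis element of Alexander grading $>g$, which cannot exist in a reduced complex since $\widehat{HFK}(K,s)=0$ for $s>g$. That is clean and correct. However, two of your items have genuine gaps.

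First, item (1). You deduce it from ``$\varepsilon(U)=0$ together with the concordance invariance of $\varepsilon$,'' but concordance invariance of $\varepsilon$ has not been established at this point --- it is essentially what properties (1) and (6) are needed to prove (concordance of $K$ and $K'$ gives sliceness of $K\#\overline{K'}$, and one then invokes (1) and (6) to compare $\varepsilon(K)$ with $\varepsilon(K')$). As written your argument is circular. The paper instead argues directly from sliceness: for a slice knot the surgery correction terms agree with those of the unknot, and a commutative diagram relating $v'_\tau$ (resp.\ $v_\tau$) to the corresponding $HF^+$ cobordism maps shows both $v_\tau$ and $v'_\tau$ are nontrivial on homology, so $\nu(K)=\nu'(K)=\tau(K)=0$.

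Second, item (5). Your computation that a horizontal arrow of length $\ell$ satisfies $M(x_{i+1})=M(x_i)+2\ell-1$ is correct, but the quantity you then form, $\delta=A-M/2$, is not the grading in which thin knots are homogeneous; thinness means $M-A$ is constant. In the grading $A-M/2$ even the trefoil is inhomogeneous (its three generators have $A-M/2$ equal to $1$, $\tfrac12$, $0$), so the hypothesis you propose to ``grant'' is false. Worse, if it were true your argument would show a thin knot's reduced complex has \emph{no} horizontal arrows at all, forcing $\varepsilon=0$ for every thin knot --- contradicting the very statement (5) you are proving, since the trefoil is thin with $\varepsilon=\pm1$. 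The correct use of thinness is that constancy of $M-A$ forces every horizontal and vertical arrow to have length exactly $1$ (from your own Maslov relation, $(M-A)(x_{i+1})-(M-A)(x_i)=\ell-1$), which leads to Petkova's model complexes (a single staircase plus square summands); the paper simply cites \cite[Lemma 5]{Petkova}, from which $\tau$ and $\varepsilon$ are read off the staircase.
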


\begin{proof}[Proof of (\ref{item:slice})]
If $K$ is slice, then the surgery correction terms defined in \cite{OSabsgr} vanish (i.e, agree with those of the unknot). We claim that for $N$ sufficiently large, the maps
$$\widehat{HF}(S^3_{N}(K), \mathfrak{s}_0) \rightarrow \widehat{HF}(S^3) \quad \textrm{and} \quad \widehat{HF}(S^3) \rightarrow \widehat{HF}(S^3_{-N}(K), \mathfrak{s}_0)$$
are non-trivial. Indeed, following \cite[Section 2.2]{RGT}, the surgery corrections terms can be defined in terms of the maps
$${HF}^+(S^3) \rightarrow {HF}^+(S^3_{-N}(K), \mathfrak{s}_s),$$
and we have the commutative diagram
$$\begin{CD}
	 \widehat{HF}(S^3) @>v'_s>> \widehat{HF}(S^3_{-N}(K),  \mathfrak{s}_s) \\
	@V\iota VV						@V\iota_{s}VV\\
	 {HF}^+(S^3) @>u'_s>> {HF}^+(S^3_{-N}(K), \mathfrak{s}_s).
\end{CD}$$
Let $N \gg 0$. If the surgery corrections terms vanish (that is, agree with those of the unknot), then $u'_\tau$ is an injection \cite[Section 2.2]{RGT} and so the composition $\iota \circ u'_\tau$ is non-trivial. By commutativity of the diagram, it follows that $v'_\tau$ must be non-trivial. A similar diagram in the case of large positive surgery shows that $v_\tau$ must be non-trivial as well. Thus, $\nu(K)$ and $\nu'(K)$ both equal $\tau(K)=0$, and so $\varepsilon(K)=0$.
\end{proof}

\begin{proof}[Proof of (\ref{item:tau0})]
If $\varepsilon(K)=0$, then by Lemma \ref{lem:epsilon0}, there exists an element $x_0$ that is the distinguished element of both a vertically and horizontally simplified basis. If $A(x_0)$ is the Alexander grading of $x_0$ viewed in $C^{\mathrm{vert}}\simeq \widehat{CFK}(K)$, then $-A(x_0)$ is the Alexander grading of $x_0$ viewed in $C^{\mathrm{horz}}\simeq \widehat{CFK}(K)$. This implies that $\tau(K)=-\tau(K)$, hence $\tau(K)=0$.
\end{proof}

\begin{proof}[Proof of (\ref{item:mirror})]
The symmetry properties of $CFK^{\infty}(K)$ in \cite[Section 3.5]{OSknots} imply that $\tau(\overline{K})=-\tau(K)$ and $\nu(\overline{K})=-\nu'(K)$. Hence, $\varepsilon(\overline{K})=-\varepsilon(K)$.
\end{proof}

\begin{proof}[Proof of (\ref{item:g})]
Without loss of generality, we can consider the case $\tau(K)>0$. By hypothesis, $\tau(K)=g(K)$, and by the adjunction inequality for knot Floer homology \cite[Theorem 5.1]{OSknots}, 
$$H_*(C\{i<0, j=g(K)\})=0.$$ 
Hence by considering the short exact sequence
$$0 \rightarrow C\{i<0, j=g(K)\} \rightarrow A_{g(K)} \rightarrow C\{i=0, j \leq g(K)\} \rightarrow 0$$
and the fact that the inclusion $C\{i=0, j \leq g(K)\} \hookrightarrow C\{i=0\}$ is a quasi-isomorphism (again, by the adjunction inequality), we see that the map $v_{g(K)}: A_{g(K)} \rightarrow C\{i=0\}$ induces an isomorphism on homology. Thus $\nu(K)=\tau(K)$, so $\varepsilon(K)$ is equal to $0$ or $1$. Since $\tau(K)>0$ by (\ref{item:tau0}) above, it follows that $\varepsilon(K)$ is not equal to zero, and hence is equal to $1$.
\end{proof}

\begin{proof}[Proof of (\ref{item:homthin})]
In \cite[Lemma 5]{Petkova}, Petkova constructs model complexes for $CFK^\infty(K)$ of homologically thin knots, and shows that if $\tau(K)=n$, then the model complex contains a direct summand isomorphic to 
\[ CFK^\infty(T_{2, 2n+1}) \textup{ if } n > 0 \qquad \textup{ and } \qquad CFK^\infty(T_{2, 2n-1}) \textup{ if } n \leq 0.\]
This summand supports $H_*(CFK^\infty(K))$ and thus determines the value of $\varepsilon$. The result now follows from (\ref{item:g}).
\end{proof}

\begin{proof}[Proof of (\ref{item:connectedsum})]
Recall from \cite[Theorem 7.1]{OSknots} that $CFK^-(K \# K') \simeq CFK^-(K) \otimes_{\F[U]} CFK^-(K')$. We first consider the case where $\varepsilon(K)=\varepsilon(K')=1$. Then by Lemma \ref{lem:basis}, there exists a horizontally simplified basis $\{x_i\}$ for $CFK^-(K)$ such that
\begin{enumerate}
	\item $x_2$ is the distinguished element of a vertically simplified basis
	\item $\partial^{\mathrm{horz}}U^m \cdot x_1 =  x_2$,
\end{enumerate}
and similarly, a horizontally simplified basis $\{y_i\}$ for $CFK^-(K')$ with $y_2$ the distinguished element of a vertically simplified basis, and $\partial^{\mathrm{horz}}U^n \cdot y_1=y_2$. Note that $A(x_2)=\tau(K)$ and $A(y_2)=\tau(K')$. Then $A(x_2 \otimes y_2)=\tau(K)+\tau(K')=\tau(K \# K')$, and $[x_2 \otimes y_2] \neq 0 \in H_*(C^{\mathrm{vert}}(K \# K'))$. Recall the map
$$v'_s: C^{\mathrm{vert}} \rightarrow A'_s,$$
and notice that $[x_2 \otimes y_2]=0 \in H_*(A'_{\tau(K \#K')}(K \# K'))$ since $\partial'_{\tau(K \#K')}(U^{m}x_1 \otimes y_2)=x_2 \otimes y_2$. Hence $\nu'(K \# K')=\tau(K \# K')-1$, implying that $\varepsilon(K \# K')=1$. The case where $\varepsilon(K)=\varepsilon(K')=-1$ follows similarly.

Finally, if $\varepsilon(K)=0$, then by Lemma \ref{lem:epsilon0}, there exists a basis $\{x_i\}$ for $CFK^-(K)$ such that the element $x_0$ is the distinguished element of both a horizontally simplified basis and  a vertically simplified basis. Then to determine $\nu(K \# K')$ and $\nu'(K \# K')$, it is sufficient to consider just $\{x_0\} \otimes CFK^-(K')$, in which case, $\varepsilon(K \# K')=\varepsilon(K')$.
\end{proof}

Note that when $\varepsilon(K)=1=-\varepsilon(K')$, it is possible for $\varepsilon(K \# K')$ to take on all possible values. Indeed, if $\varepsilon(K)=1$, then $\varepsilon(-K)=-1$ and $\varepsilon(K \# -K)=0$, while $\varepsilon(-2K)=1$ and $\varepsilon(K \# -2K)=-1$. The various types of behavior of $\varepsilon(K \# nK'), n \in \Z$, is studied further in \cite{Homsmooth} and \cite{HancockHomNewman}, with applications to understanding the knot concordance group.

\section{Computation of $\tau$ for $(p, pn+1)$-cables}
\label{sec:Main}

We will first consider $(p, pn+1)$-cables, whose Heegaard diagrams are easier to work with, and prove the following version of Theorem \ref{thm:Main}:

\begin{theorem}
\label{thm:p,pn+1}
$\tau(K_{p, pn+1})$ behaves in one of three ways. If $\varepsilon(K)=1$, then
$$\tau(K_{p, pn+1})=p\tau(K)+\frac{pn(p-1)}{2}.$$
If $\varepsilon(K)=-1$, then
$$\tau(K_{p, pn+1})=p\tau(K)+\frac{pn(p-1)}{2}+p-1.$$
Finally, if $\varepsilon(K)=0$, then
\begin{equation*}
\tau(K_{p, pn+1})= \left\{
\begin{array}{ll}
\frac{pn(p-1)}{2}+p-1 & \text{if } n<0\\
\frac{pn(p-1)}{2} & \text{if } n \geq 0.
\end{array} \right.
\end{equation*}
\end{theorem}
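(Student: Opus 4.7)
The plan is to apply the bordered Heegaard Floer machinery with the decomposition $S^3 = Y_1 \cup_{T^2} Y_2$, where $Y_1$ is a solid torus equipped with a $(p,1)$-torus knot pattern $K_1$ on its boundary, and $Y_2 = S^3 \setminus \mathrm{nbd}\,K$ carries framing $n$ (meridian $= \alpha_2^a$, $n$-framed longitude $= \alpha_1^a$). Under this gluing, $K_1 \subset Y_1$ becomes the $(p,pn+1)$-cable $K_{p,pn+1}$, and the pairing theorem gives
\[
\widehat{CFK}(S^3, K_{p,pn+1}) \simeq \widehat{CFA}(Y_1, K_1) \boxtimes \widehat{CFD}(Y_2)
\]
as $\Z$-filtered chain complexes. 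The strategy, as outlined in the paper, is to (i) produce an explicit description of each of the two bordered invariants, (ii) find a cycle in the box tensor product that represents the generator of $\widehat{HF}(S^3) \cong \F$, and (iii) read off its absolute Alexander grading, which will equal $\tau(K_{p,pn+1})$.

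For $\widehat{CFD}(Y_2)$, I would invoke the algorithm recalled in Section 2.4: start from a vertically simplified basis $\{x_i\}$ for $CFK^-(K)$ (and when convenient, a horizontally simplified one $\{x'_i\}$), attach the length-$\ell$ chains of $y^i_k$'s and $w^i_k$'s dictated by the vertical and horizontal arrows, and append the appropriate unstable chain connecting $x_0$ to $x'_0$ (depending on whether $n<2\tau(K)$, $n=2\tau(K)$, or $n>2\tau(K)$). For $\widehat{CFA}(Y_1, K_1)$, I would draw a genus-one bordered Heegaard diagram for the solid torus in which the $\beta$-circle winds $p$ times around the $\alpha^a_1$-curve and once around $\alpha^a_2$, count the holomorphic disks directly (they are combinatorial at this level of complexity), and tabulate the $\mathcal{A}_\infty$ operations $m_{j+1}$ along with the induced Alexander filtration (this is essentially a bordered analogue of Hedden's cabling calculations from \cite{Heddenthesis,HeddencablingII}, but now computed once and for all, independently of $K$).

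With both factors in hand, I would carry out the box tensor product using the $D_i$ formulation of $\delta_1$. The key geometric input is Lemmas \ref{lem:basis} and \ref{lem:epsilon0}, which let me pick a basis for $CFK^-(K)$ adapted to the value of $\varepsilon(K)$: when $\varepsilon(K)=1$ there is a horizontal arrow $U^{m}\cdot x_1 \to x_2$ with $x_2$ the vertical distinguished element; when $\varepsilon(K)=-1$ there is a horizontal arrow $U^{m}\cdot x_1 \to x_2$ with $x_1$ vertically distinguished; and when $\varepsilon(K)=0$ there is an element $x_0$ that is simultaneously vertically and horizontally distinguished. In each case, I would assemble an explicit cycle in $\widehat{CFA}(Y_1,K_1) \boxtimes \widehat{CFD}(Y_2)$ by pairing a carefully chosen $\widehat{CFA}$-generator with $x_0$ (or with $x_1$, $x_2$, and appropriate $y^i_k$ or $w^i_k$ elements from the adjoined chains and the unstable chain), verify via the $m_{j+1}$ and $D_i$ tables that $\partial^\boxtimes$ annihilates it, and confirm that it survives to $\widehat{HF}(S^3)$. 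The absolute Alexander grading is then computed from the $\tfrac12 \langle c_1, [\widehat F]\rangle$ formula, equivalently by tracking the contribution from the $\widehat{CFA}$-side (a combinatorial quantity depending only on $p$ and $n$ giving the $pn(p-1)/2$ or $pn(p-1)/2 + (p-1)$ term) and the contribution $p\cdot A(x_*)$ from the knot side.

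The main obstacle is the middle step: for each value of $\varepsilon(K)$ (and, in the $\varepsilon=0$ case, each sign of $n$), I must exhibit by hand a representative of the $\widehat{HF}(S^3)$-generator in the tensor product and rule out that its Alexander grading could be lowered by adding a boundary. The arrow structure of $\widehat{CFD}(Y_2)$ near $x_0, x'_0$, and the unstable chain is delicate — in particular the length-one horizontal arrow guaranteed by Lemma \ref{lem:basis} when $\varepsilon \neq 0$ forces a specific pairing with an $\widehat{CFA}$-generator on the cable side, and it is precisely this extra pairing that shifts $\tau$ by $p-1$ in the $\varepsilon=-1$ case versus the $\varepsilon=1$ case. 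Once the correct cycle is pinned down, the Alexander-grading bookkeeping collapses to the stated arithmetic formulas, and the three subcases of the theorem follow directly.
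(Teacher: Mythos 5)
Your proposal follows essentially the same route as the paper: the same solid-torus/knot-complement decomposition, the pairing theorem, the $CFK^-\!\to\widehat{CFD}$ algorithm with the unstable chain, the bases from Lemmas \ref{lem:basis} and \ref{lem:epsilon0} adapted to $\varepsilon(K)$, identification of an explicit generator of $\widehat{HF}(S^3)$ in the box tensor product, and the $\tfrac12\langle c_1,[\widehat F]\rangle$ periodic-domain computation of its Alexander grading (the paper handles $\varepsilon=0$ by observing that the $x_0$-summand of $\widehat{CFD}$ matches that of the unknot complement, reducing to torus knots, which is the same idea as your pairing with $x_0$). One small caution: Lemma \ref{lem:basis} does not guarantee the horizontal arrow has length one, but this does not affect the argument, since what matters is only the local arrow structure of $\widehat{CFD}$ at the distinguished elements.
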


\noindent The proof will proceed as follows. We will determine that only a certain small piece of the Type $D$ bordered invariant associated to the framed knot complement is necessary to determine a suitable generator for $\widehat{HF}(S^3)$. The form of this piece of $\widehat{CFD}$ depends only on the framing parameter relative to $2\tau(K)$, and on $\varepsilon(K)$. We will then determine the absolute Alexander grading of this generator in terms of combinatorial data associated to the Heegaard diagrams for the pattern and companion knots.

\subsection{The case $\varepsilon(K)=1$}
\label{subsec:epsilon1}
\noindent We first consider the case $\varepsilon(K)=1$. By Lemma \ref{lem:basis} and the symmetry properties of $CFK^{\infty}(K)$, we can find a \emph{vertically} simplified basis $\{ x_i\}$ over $\F[U]$ for $CFK^-(K)$ with the following properties, after possible reordering:
\begin{enumerate}
\item $x_2$ is the distinguished element of a \emph{horizontally} simplified basis.
\item $\partial^{\mathrm{vert}} x_1=x_2$.
\item $x_0$ is the vertically distinguished element.
\end{enumerate}

Let $Y_{K,n}$ be the bordered manifold $S^3-\mathrm{nbd} \ K$ with the parametrization specified by the meridian and an $n$-framed longitude. We will use Theorems $11.27$ and $A.11$ of \cite{LOT} to compute $\widehat{CFD}(Y_{K,n})$ from $CFK^{\infty}(K)$. Consider the basis $\{ x_i \}$ as above. Then if $n < 2\tau(K)$, there is a portion of $\widehat{CFD}(Y_{K,n})$ (consisting of the unstable chain and an additional generator $y$ from a vertical chain) of the form
\begin{equation}
\label{eqn:n<2tau}
x_0 \overset{D_1}{\longrightarrow} z _1 \overset{D_{23}}{\longleftarrow} z_2 \overset{D_{23}}{\longleftarrow} \ldots  \overset{D_{23}}{\longleftarrow} z_m  \overset{D_{3}}{\longleftarrow} x_2 \overset{D_{123}}{\longrightarrow}y,
\end{equation}
where $m=2\tau(K)-n$. If $n=2\tau(K)$, there is a portion of $\widehat{CFD}(Y_{K,n})$ of the form
\begin{equation}
\label{eqn:n=2tau}
x_0 \overset{D_{12}}{\longrightarrow} x_2 \overset{D_{123}}{\longrightarrow}y.
\end{equation}
Finally, if $n>2\tau(K)$, there is a portion of $\widehat{CFD}(Y_{K,n})$ of the form
\begin{equation}
\label{eqn:n>2tau}
x_0 \overset{D_{123}}{\longrightarrow} z _1 \overset{D_{23}}{\longrightarrow} z_2 \overset{D_{23}}{\longrightarrow}  \ldots \overset{D_{23}}{\longrightarrow} z_m \overset{D_{2}}{\longrightarrow} x_2 \overset{D_{123}}{\longrightarrow}y,
\end{equation}
where $m=n-2\tau(K)$. The generators $x_0$ and $x_2$ are in the idempotent $\iota_1$, while the generators $z_1, \ldots, z_m$, and $y$ are in the idempotent $\iota_2$.

Let $\widehat{CFA}(p,1)$ be the bordered invariant associated  to the pattern knot in the solid torus that winds $p$ times longitudinally and once meridionally. See Figure \ref{fig:cablepattern}. We will use the pairing theorem for bordered Heegaard Floer homology of \cite[Theorem 11.21]{LOT} to compute $\tau(K_{p, pn+1})$ by studying $\widehat{CFA}(p,1)\boxtimes \widehat{CFD}(Y_{K,n})$, which is filtered chain homotopic to $\widehat{CFK}(K_{p, pn+1})$.

\begin{figure}[htb!]
\labellist
\small \hair 2pt
\pinlabel $w$ at 50 300
\pinlabel $z$ at 180 135
\pinlabel $0$ at 35 330
\pinlabel $1$ at 320 330
\pinlabel $2$ at 320 40
\pinlabel $3$ at 35 40
\pinlabel $a$ at 18 75
\pinlabel $b_1$ at 85 20
\pinlabel $\ldots$ at 108 20
\pinlabel $b_{p-1}$ at 140 20
\pinlabel $b_p$ at 230 20
\pinlabel $\ldots$ at 251 20
\pinlabel $b_{2p-2}$ at 280 20
\endlabellist
\centering
\includegraphics[scale=1]{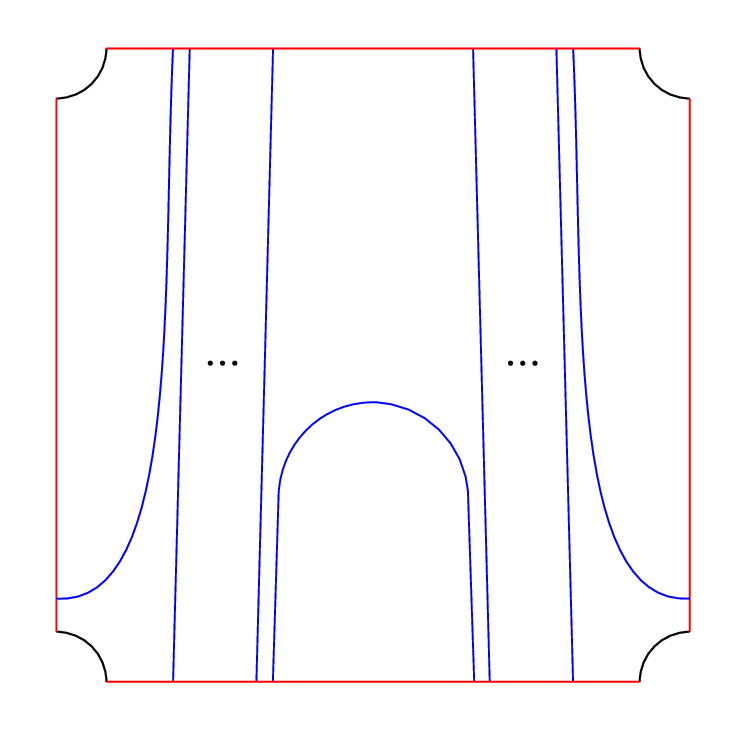}
\caption{A genus one bordered Heegaard diagram $\mathcal{H}(p,1)$ for the $(p,1)$-cable in the solid torus.}
\label{fig:cablepattern}
\end{figure}

\begin{remark}
\emph{
We remark here on the basepoint conventions (similar to those in \cite{Levine}) used henceforth in this paper. We prefer to work with filtered chain complexes, rather than $\F[U]$-modules, and thus compute the filtered chain complex $\widehat{CFK}$, rather than the $\F[U]$-module $gCFK^-$. It follows from \cite[Proposition 3.9]{OSknots} that the $\Z$-filtered chain complex $\widehat{CFK}$ contains exactly the same information as the $\F[U]$-module $gCFK^-$, and corresponds to interchanging the roles of $i$ and $j$ in the $\Z \oplus \Z$-filtration on $CFK^\infty$. In terms of the Heegaard diagram, this corresponds to interchanging the roles of $w$ and $z$, i.e., we now place the basepoint $w$ on $\partial \overline{\Sigma}$ and the basepoint $z$ in the interior of the Heegaard diagram for the pattern knot in the solid torus.}

\emph{
Each term in an algebra relation $m_i$ in $\widehat{CFA}(p, 1)$ contributes a relative filtration shift, denoted $\Delta_A$. A term in an algebra relation is induced by a unique domain on the Heegaard surface and the relative filtration shift is equal to the number of times that domain crosses the basepoint $z$. (Each domain must miss the basepoint $w$ completely.) This relative filtration shift naturally extends to a relative $\Z$-filtration on the tensor product.
}

\emph{
Since switching the roles of $w$ and $z$ induces a chain homotopy equivalence on $CFK^{\infty}(K)$, it does not change the homotopy type of $\widehat{CFD}$ of the framed knot complement. 
}
\end{remark}

The $\mathcal{A}_{\infty}$-module $\widehat{CFA}(p,1)$ is generated by $a, b_1, b_1, \ldots, b_{2p-2}$, where the generator $a$ is in the idempotent $\iota_1$ and the generators $b_1, \ldots, b_{2p-2}$ are in the idempotent $\iota_2$. 
We see that we have the following algebra relations on $\widehat{CFA}(p,1)$, where $\Delta_A$ records the relative filtration shift, i.e., the number of times that the associated domain crosses the basepoint $z$:
\begin{equation*}
\begin{array}{lll}
m_{3+i}(a, \rho_3, \overbrace{\rho_{23}, \ldots, \rho_{23}}^{i}, \rho_2)=a, &\Delta_A=pi+p, &i \geq 0 \\
m_{4+i+j}(a, \rho_3, \overbrace{\rho_{23}, \ldots, \rho_{23}}^{i}, \rho_2, \overbrace{\rho_{12}, \ldots, \rho_{12}}^{j}, \rho_1)=b_{j+1}, &\Delta_A=pi+j+1, & 0\leq j \leq p-2\\
&& i \geq 0\\
m_{2+j}(a, \overbrace{\rho_{12}, \ldots, \rho_{12}}^{j}, \rho_1)=b_{2p-j-2}, &\Delta_A=0, &0\leq j \leq p-2 \\
&&\\
m_1(b_j)=b_{2p-j-1}, &\Delta_A=p-j, &1\leq j \leq p-1 \\
m_{3+i}(b_j, \rho_2, \overbrace{\rho_{12}, \ldots, \rho_{12}}^{i}, \rho_1)=b_{j+i+1}, &\Delta_A=i+1, &1\leq j \leq p-2\\
&& 0\leq i \leq p-j-2 \\
m_{3+i}(b_j, \rho_2, \overbrace{\rho_{12}, \ldots, \rho_{12}}^{i}, \rho_1)=b_{j-i-1}, &\Delta_A=0,  &p+1\leq j \leq 2p-2\\
&& 0 \leq i \leq j-p-1\\
\end{array}
\end{equation*}
Since the Heegaard surface has genus one, these relations come from counting immersed disks of index one. Moreover, by lifting to the universal cover of the torus (more precisely,  the cover $\mathbb{R}^2 \setminus \Z \times \Z$ of the punctured torus), we can instead count \emph{embedded} disks of index one. The index requirement implies that each corner must be acute. This approach is used by Petkova \cite[Section 3.3]{Petkovathesis} to show that the algebra relations on $\widehat{CFA}(p, 1)$ are precisely those listed above.
For example, the periodic domain shown in Figure \ref{fig:cablepatternPD} contributes to the relation $m_3(a, \rho_3, \rho_2)=a$ with $\Delta_A=p$, and taken with multiplicity $i+1$, the domain contributes to the relation 
\[ m_{3+i}(a, \rho_3, \overbrace{\rho_{23}, \ldots, \rho_{23}}^{i}, \rho_2)=a \qquad \qquad \Delta_A=pi+p. \]
More generally, the domain in Figure \ref{fig:cablepatternPD} generates the set of positive periodic domains. 
One can enumerate the finitely many embedded disks in the universal cover whose boundary does not project to the entire $\beta$-circle. Each algebra relation then comes from the sum one of these disks with a positive periodic domain.

\begin{figure}[htb!]
\labellist
\small \hair 2pt
\pinlabel $w$ at 50 300
\pinlabel $z$ at 180 135
\pinlabel $0$ at 35 330
\pinlabel $1$ at 320 330
\pinlabel $2$ at 320 40
\pinlabel $3$ at 35 40
\pinlabel $a$ at 18 75
\pinlabel $a$ at 340 75
\endlabellist
\centering
\includegraphics[scale=0.6]{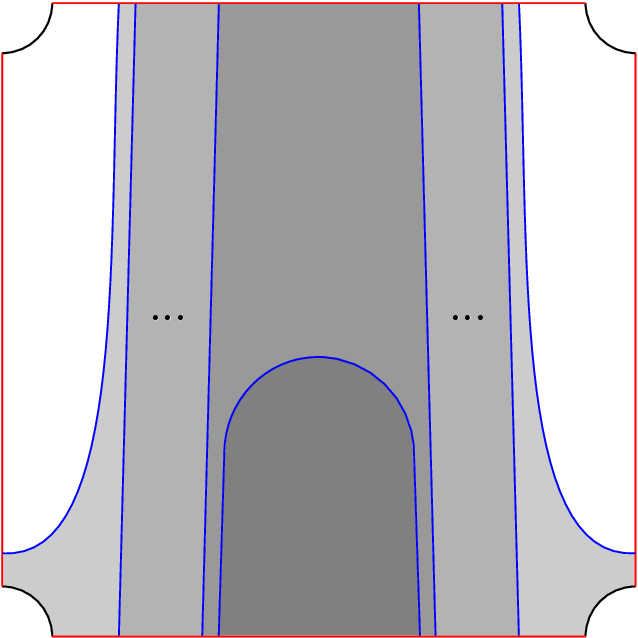}
\caption{A periodic domain.}
\label{fig:cablepatternPD}
\end{figure}

By the pairing theorem for bordered Floer homology \cite[Theorem 11.21]{LOT}, we have the $\Z$-filtered chain homotopy equivalence
$$\widehat{CFK}(K_{p, pn+1}) \simeq \widehat{CFA}(p,1)\boxtimes \widehat{CFD}(Y_{K,n}).$$
We will write $xy$, rather than $x \otimes y$, to denote the tensor product of two elements.
The following lemma identifies a generator of $\widehat{HF}(S^3)$ in the tensor product:

\begin{lemma}
\label{lem:generator}
When $\varepsilon(K)=1$, the element $ax_2$ in the tensor product 
$$\widehat{CFK}(K_{p, pn+1}) \simeq \widehat{CFA}(p,1)\boxtimes \widehat{CFD}(Y_{K,n})$$
is a  generator of $\widehat{HF}(S^3)$ for every framing $n$. 
\end{lemma}

\begin{proof}
When we tensor $\widehat{CFA}(p,1)$ with the portion of $\widehat{CFD}(Y_{K,n})$ in Equation \ref{eqn:n<2tau}, we see  that $ax_2$ has no incoming or outgoing differentials in the tensor product. This can be seen by noticing that $a$ has no $m_1$ algebra relations, nor any algebra relations beginning with $\rho_{123}$, nor any algebra relations of the form $m_{2+i}(a, \rho_3, \rho_{23}, \ldots, \rho_{23})$, $i \geq 0$. Hence, $ax_2$ represents a generator for $\widehat{HF}(S^3)$ of minimal Alexander grading. Similarly, we see that tensoring $\widehat{CFA}(p,1)$ with either of the pieces of $\widehat{CFD}(Y_{K,n})$ in Equations \ref{eqn:n=2tau} or \ref{eqn:n>2tau} also gives us $ax_2$ as the generator for $\widehat{HF}(S^3)$.
\end{proof}

We now need to compute the absolute Alexander grading of the generator $ax_2$. Recall that one way to define the absolute Alexander grading is
$$A(\mathbf{x})=\tfrac{1}{2}\langle c_1(\underline{\mathfrak{s}}(\mathbf{x})), [\widehat{F}]\rangle.$$
Also recall that $\langle c_1(\underline{\mathfrak{s}}(\mathbf{x})), [\widehat{F}]\rangle$ can be computed in terms of combinatorial data associated to the Heegaard diagram for $S^3$ compatible with the knot $K$. More precisely, replace the $\alpha$-circle representing a meridian of $K$ with a $0$-framed longitude $\lambda$. We refer to this local region of the Heegaard diagram as the winding region. Then we have the following formula \cite[Equation 9]{OSknots}:
$$\langle c_1(\underline{\mathfrak{s}}(\mathbf{x})), [\widehat{F}]\rangle=\chi(\mathcal{P})+2n_{\mathbf{x}}(\mathcal{P}),$$
where $\mathcal{P}$ is a periodic domain representing $[\widehat{F}]$, $\chi(\mathcal{P})$ is the Euler measure of $\mathcal{P}$, and $n_{\mathbf{x}}(\mathcal{P})$ is the local multiplicity of $\mathcal{P}$ at $\mathbf{x'}$, where $\mathbf{x}'$ is obtained from $\mathbf{x}$ by moving the support of $\mathbf{x}$ on the meridian to the longitude, as in Figure \ref{fig:windingregion}; that is, we replace the intersection point labelled $x_2$ with $x'_2$.
Recall that the Euler measure of the closure of a single connected component $D$ in $\Sigma \setminus (\boldsymbol{\alpha} \cup \boldsymbol{\beta})$ is
\[ e(D) = \overline{\chi}(D) -\tfrac{k}{4} + \tfrac{\ell}{4}, \]
where $\overline{\chi}(D)$ denotes the Euler characteristic of $D$, $k$ is the number of acute corners of $D$, and $\ell$ it the number of obtuse corners. We extend this formula to all domains by  linearity.

We will use this formula to compute the Alexander grading of $ax_2$. 

\begin{lemma}
\label{lem:Alexandergrading}
The Alexander grading of $ax_2$ is
$$A(ax_2)=p\tau(K)+\tfrac{pn(p-1)}{2}.$$
\end{lemma}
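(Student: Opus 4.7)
The plan is to apply the combinatorial formula
$$A(\mathbf{x}) = \tfrac{1}{2}\bigl(\chi(\mathcal{P}) + 2n_{\mathbf{x}'}(\mathcal{P})\bigr)$$
recalled just before the lemma, where $\mathcal{P}$ is a periodic domain representing the capped-off Seifert surface $[\widehat{F}]$ of $K_{p,pn+1}$ and $\mathbf{x}'$ is obtained from $ax_2$ by pushing the component on the meridian of $K_{p,pn+1}$ across the winding region onto a $0$-framed longitude. First I would realize $(S^3, K_{p,pn+1})$ by gluing the bordered Heegaard diagram $\mathcal{H}(p,1)$ of Figure \ref{fig:cablepattern} to a bordered Heegaard diagram for the $n$-framed complement $Y_{K,n}$ along their common torus boundary. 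Under the gluing, the meridian $\alpha^a_2$ on the companion side joins a short arc across the pattern piece to form the meridian of $K_{p,pn+1}$; I would then replace this meridian with a $0$-framed longitude $\lambda$ of $K_{p,pn+1}$, so that the entire winding region lies inside the pattern diagram.

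Next I would write the periodic domain as a sum
$$\mathcal{P} = \mathcal{P}_{\mathrm{pat}} + p\cdot\mathcal{P}_{\mathrm{comp}},$$
where $\mathcal{P}_{\mathrm{comp}}$ is a periodic domain in the companion diagram representing the Seifert surface of $K$, and $\mathcal{P}_{\mathrm{pat}}$ is a domain supported in $\mathcal{H}(p,1)$ whose $\alpha$-boundary contains $\lambda$ and which accounts for the extra twisting of the $(p,pn+1)$-cable relative to the $0$-framed longitude of $K$. The multiplicity $p$ on the companion side reflects the fact that $K_{p,pn+1}$ runs $p$ times around $K$ longitudinally; the framing parameter $n$ enters through the number of times $\lambda$ wraps around the boundary torus before closing up.

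By additivity of the Euler measure and of the local-multiplicity functional, the pairing then splits as
$$\chi(\mathcal{P}) + 2n_{(ax_2)'}(\mathcal{P}) = \bigl(\chi(\mathcal{P}_{\mathrm{pat}}) + 2n_a(\mathcal{P}_{\mathrm{pat}})\bigr) + p\bigl(\chi(\mathcal{P}_{\mathrm{comp}}) + 2n_{x_2}(\mathcal{P}_{\mathrm{comp}})\bigr).$$
The companion summand equals $p\cdot 2A(x_2) = 2p\tau(K)$, using that $x_2$ is the distinguished element of a horizontally simplified basis and hence sits at Alexander grading $\tau(K)$ in $CFK^-(K)$. The pattern summand is a purely combinatorial quantity which I would compute by a direct region-by-region tally in $\mathcal{H}(p,1)$ with its $n$-twisted longitude $\lambda$ inserted in the winding region; the outcome should be $pn(p-1)$. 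Dividing by $2$ then gives the claimed value $p\tau(K) + \tfrac{pn(p-1)}{2}$.

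The main obstacle is the bookkeeping in the pattern piece: one must verify that $\mathcal{P}_{\mathrm{pat}} + p\mathcal{P}_{\mathrm{comp}}$ is in fact a valid periodic domain (its boundary hits $\lambda$ exactly once, avoids $w$, and matches across the gluing torus), and then accurately compute the Euler measures and local multiplicities of the many thin regions produced by winding $\lambda$ an $n$-dependent number of times around the torus. The linear dependence on $n$ and the factor $(p-1)$ should emerge from counting these regions, with the $p$ strands of the cable pattern each contributing the same twist.
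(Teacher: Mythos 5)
Your overall strategy is the same as the paper's: decompose the periodic domain for the cable's Seifert surface as a pattern piece plus $p$ times a companion piece, use additivity of the Euler measure and the local multiplicity, and evaluate the two contributions separately. However, there is a concrete error in how you split the total. You assert that the companion summand is $p\bigl(\chi(\mathcal{P}_{\mathrm{comp}})+2n_{x_2}(\mathcal{P}_{\mathrm{comp}})\bigr)=2p\tau(K)$ on the grounds that $x_2$ sits in Alexander grading $\tau(K)$. That identity holds for the domain $\mathcal{P}'$ in the \emph{closed} doubly pointed diagram for $K$ with a $0$-framed longitude, but not for the domain $\mathcal{P}_D$ in the bordered diagram for $Y_{K,n}$ that actually appears in the gluing: there the $\alpha$-arc is an $n$-framed longitude, a small disk around the longitude--meridian intersection has been removed at the puncture, and the support of $x_2$ lies on the meridian rather than the longitude. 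One finds $\chi(\mathcal{P}_D)=\chi(\mathcal{P}')+\tfrac{n}{2}+\tfrac{1}{2}$ and $n_{x_2}(\mathcal{P}_D)=n_{x_2}(\mathcal{P}')-\tfrac{n}{2}-\tfrac{1}{2}$, so the companion summand is $p\bigl(2\tau(K)-\tfrac{n}{2}-\tfrac{1}{2}\bigr)$, not $2p\tau(K)$.

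Correspondingly, your claim that the pattern summand ``should be'' $pn(p-1)$ is reverse-engineered from the desired answer rather than computed, and it is not what an honest tally gives: the correct pattern domain is $\mathcal{P}_A=pn\,\mathcal{P}_\mu+\mathcal{P}_\lambda$ with $\chi(\mathcal{P}_A)+2n_a(\mathcal{P}_A)=p^2n-pn+\tfrac{pn+p}{2}$, and the extra $\tfrac{pn+p}{2}$ exactly cancels the deficit on the companion side. If you kept your value $2p\tau(K)$ for the companion piece and then carried out the pattern computation correctly, your final answer would be too large by $\tfrac{pn+p}{4}$. The missing idea is that, because the two pieces must have matching nonzero multiplicities in the four regions around the puncture, neither piece individually computes an ``Alexander grading'' of its own generator; the $n$- and $p$-dependent boundary corrections must be tracked explicitly on both sides and shown to cancel.
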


We will construct a domain $\mathcal{P}$ that may be decomposed into a domain $\mathcal{P}_A$ on $\mathcal{H}(p,1)$ and a domain $p \cdot \mathcal{P}_D$ on $\mathcal{H}(Y_{K,n})$, whose multiplicities agree in the four regions surrounding the puncture on each surface. This is analogous to constructing a Seifert surface for $K_{p, pn+1}$ from $p$ times a Seifert surface for $K$ together with a piece coming from the pattern torus knot. Indeed, the domain $\mathcal{P}_D$ comes from a domain representing a Seifert surface for $K$, modified in the winding region as described below.  The domain $\mathcal{P}_A$ is constructed from $pn \cdot \mathcal{P}_\mu$ and $\mathcal{P}_\lambda$, where $\mathcal{P}_\mu$ and $\mathcal{P}_\lambda$ are shown in Figures \ref{fig:cablepatternmuPD} and \ref{fig:cablepatternlambdaPDboth} respectively.
Then
$$\langle c_1(\underline{\mathfrak{s}}(ax_2)), [\widehat{F}]\rangle=\chi(\mathcal{P}_A)+2n_a(\mathcal{P}_A)+p\chi(\mathcal{P}_D)+2pn_{x_2}(\mathcal{P}_D)$$
since Euler measure and local multiplicity are both additive under disjoint union. 

Consider the domain $\mathcal{P}_D$ in  $\mathcal{H}(Y_{K,n})$. Recall that $x_2$ is the preferred element of a horizontally simplified basis, and it corresponds to some linear combination of generators in the diagram $\mathcal{H}(Y_{K,n})$. Choose an element in that linear combination of maximal Alexander grading. For ease of notation, we will also denote this generator by $x_2$. Our conventions in this paper for the base points in $\mathcal{H}(Y_{K,n})$ are the opposite of those in \cite{LOT}; that is, we have switched the roles of $w$ and $z$. (This was done so that we could compute the tensor product as a filtered chain complex, rather than as a $\F[U]$-module.) With our conventions, $x_2$, the preferred element of a horizontally simplified basis, will have Alexander grading $\tau(K)$ in $\widehat{CFK}(K)$.

\begin{lemma}
\label{lem:PDx2}
With $\mathcal{P}_D$ and $x_2$ as above,
$$\chi(\mathcal{P}_D)+2n_{x_2}(\mathcal{P}_D)=2\tau(K)-\tfrac{n}{2}-\tfrac{1}{2}.$$
\end{lemma}

\begin{proof}
This can be seen from the fact that a domain $\mathcal{P}'$, representing a Seifert surface for $K$, used to compute the Alexander grading of $x_2$ in $\widehat{CFK}(K)$ has multiplicities in the winding region as shown in Figure \ref{fig:windingregion}. The domain $\mathcal{P}'$ must have multiplicity zero at the $w$ and $z$ basepoint, and the boundary of the domain must include the longitude $\lambda$ exactly once. Furthermore, the longitude ``winds'' once (as in \cite[Figure 3]{OSknots}) so as to intersect the $\beta$ curve in the winding region twice (with opposite signs).

Winding the longitude (that is, changing the framing) does not change the quantity $\chi(\mathcal{P}')+2n_{x_2}(\mathcal{P}')=2\tau(K)$. However, $\mathcal{P}_D$ will differ from $\mathcal{P}'$ by winding and the removal of a small disk around the intersection of the longitude and the meridian, which implies that $\chi(\mathcal{P}_D)=\chi(\mathcal{P}')+\frac{n}{2}+\frac{1}{2}$. See Figure \ref{fig:windingregionD}. We have also moved the support of $x_2$ in the winding region from the intersection of the longitude with a $\beta$ circle (denoted $x'_2$ in Figure \ref{fig:windingregion}) to the unique intersection of the meridian with the same $\beta$ circle (denoted $x_2$ in Figure \ref{fig:windingregion}), which implies that $n_{x_2}(\mathcal{P}_D)=n_{x_2}(\mathcal{P}')-\frac{n}{2}-\frac{1}{2}$. Hence, $\chi(\mathcal{P}_D)+2n_{x_2}(\mathcal{P}_D)$ has the value claimed above.
\end{proof}

\begin{figure}[htb!]
\labellist
\small \hair 2pt
\pinlabel $z$ at 218 119
\pinlabel $w$ at 195 119
\pinlabel $0$ at 320 126
\pinlabel $0$ at 320 42
\pinlabel $0$ at 92 126
\pinlabel $0$ at 92 42
\pinlabel $-1$ at 295 80
\pinlabel $-1$ at 120 80
\pinlabel $1$ at 216 51
\pinlabel $1$ at 200 51
\pinlabel $x_2'$ at 185 57
\pinlabel $\beta$ at 16 57
\pinlabel $\lambda$ at 16 90
\pinlabel $\alpha$ at 203 140
\pinlabel $x_2$ at 215 69
\endlabellist
\centering
\includegraphics[scale=1.1]{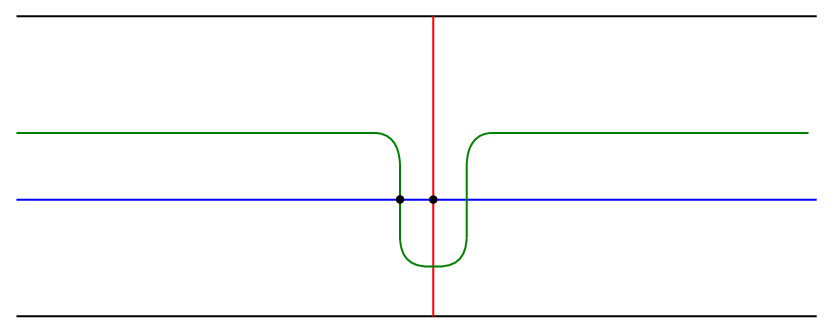}
\caption{Winding region for a knot complement. Replacing $x_2$ with $x'_2$ illustrates moving the support of a generator from the meridian to the longitude. The numbers indicate  the multiplicities of $\mathcal{P}'$.}
\label{fig:windingregion}
\end{figure}

\begin{figure}[htb!]
\labellist
\small \hair 2pt
\pinlabel $0$ at 95 117
\pinlabel $0$ at 95 43
\pinlabel $-1$ at 125 65
\pinlabel $w$ at 196 118
\pinlabel $z$ at 218 118
\pinlabel $-n+1$ at 272 117
\pinlabel $-n$ at 240 100
\pinlabel $-n-1$ at 223 61
\pinlabel $-n$ at 223 43
\pinlabel $-n+1$ at 259 33
\pinlabel $-n+2$ at 288 43
\pinlabel $0$ at 370 43
\pinlabel $0$ at 383 117
\pinlabel $-1$ at 360 76
\pinlabel $x_2$ at 200 50
\pinlabel \tiny$0$ at 199 89
\pinlabel \tiny$1$ at 199 72
\pinlabel \tiny$2$ at 218 72
\pinlabel \tiny$3$ at 218 89
\endlabellist
\centering
\includegraphics[scale=1.1]{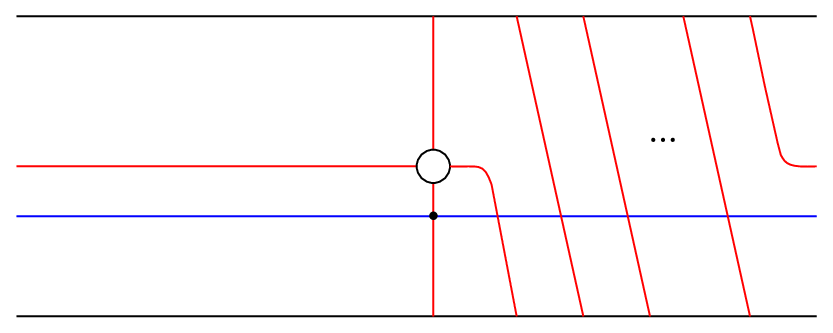}
\caption{Winding region for a bordered knot complement with the multiplicities of $\mathcal{P}_D$ shown.}
\label{fig:windingregionD}
\end{figure}

We now consider the domain $\mathcal{P}_A$ in $\mathcal{H}(p,1)$. First, we stabilize the diagram to obtain a curve, $\beta_2$, that represents the meridian of the knot sitting in the solid torus. We replace the generator $a$ with the generator $a$ union the unique intersection of $\beta_2$ with an $\alpha$-circle; for ease of notation, we also denote this generator by $a$. We then add a closed curve, $\lambda$, to $\mathcal{H}(p,1)$, such that $\lambda$ represents a $0$-framed longitude for the knot $K_{p, pn+1}$ in $S^3$. See Figure \ref{fig:cablepatternlambda}. Note that $\lambda$, which is contained entirely in $\mathcal{H}(p,1)$, will depend on the framing parameter $n$ of the knot complement $Y_{K,n}$. We require $\partial \mathcal{P}_A$ to contain $\lambda$ exactly once. Furthermore, we require the multiplicities of $\mathcal{P}_A$ in the regions $0$, $1$, $2$ and $3$ surrounding the puncture to be $0$, $-p$, $-pn-p$ and $-pn$, respectively, in order to coincide with $p$ (the winding number) times the multiplicities in the corresponding regions in $\mathcal{H}(Y_{K,n})$. The domain $\mathcal{P}_A$ will be
$$\mathcal{P}_A=pn\cdot \mathcal{P}_{\mu}+\mathcal{P}_{\lambda},$$
where $\mathcal{P}_\mu$ and $\mathcal{P}_\lambda$ are shown in Figures \ref{fig:cablepatternmuPD} and \ref{fig:cablepatternlambdaPDboth} respectively.

\begin{figure}[htb!]
\labellist
\small \hair 2pt
\pinlabel $w$ at 35 285
\pinlabel $z$ at 161 76.8
\pinlabel $0$ at 17 304
\pinlabel $1$ at 304 304
\pinlabel $2$ at 304 17
\pinlabel $3$ at 17 17
\endlabellist
\centering
\includegraphics[scale=1.1]{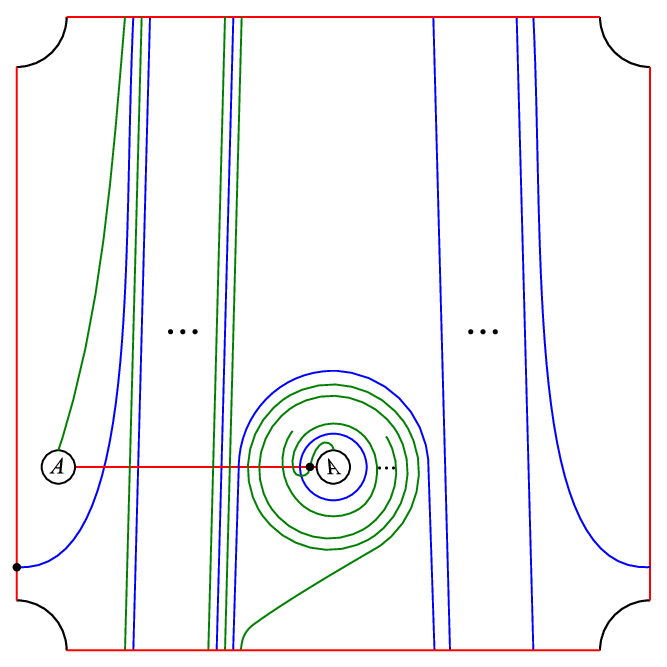}
\caption{Stabilized bordered Heegaard diagram $\mathcal{H}(p,1)$ for the $(p,1)$-cable in the solid torus with the longitude $\lambda$ shown in green. The pair of black dots indicate the generator $a$ (with its support on the meridian moved to the longitude).}
\label{fig:cablepatternlambda}
\end{figure}

\begin{lemma}
\label{lem:Aa}
With $\mathcal{P}_A$ and $a$ as above,
\[ \chi(\mathcal{P}_A)=p^2n+\frac{pn+3p}{2} \qquad \textup{and} \qquad n_a(\mathcal{P}_A)=\frac{-pn-p}{2}.\]
\end{lemma}

\begin{proof}
First consider the domain $\mathcal{P}_{\mu}$ shown in Figure \ref{fig:cablepatternmuPD}. $\mathcal{P}_{\mu}$ has zero multiplicity in the regions $0$ and $1$ near the puncture, and multiplicity $-1$ in the regions $2$ and $3$. Furthermore, $\partial \mathcal{P}_{\mu}$ contains $\beta_2$ with multiplicity $p$ (for an appropriate orientation of $\beta_2$). We see that $\chi(\mathcal{P}_{\mu})=p+\frac{1}{2}$, and $n_a(\mathcal{P}_{\mu})=-\frac{1}{2}$. 

Next, consider the domain $\mathcal{P}_{\lambda}$ shown in Figure \ref{fig:cablepatternlambdaPDboth}. $\mathcal{P}_{\lambda}$ has zero multiplicity in regions $0$ and $3$, and multiplicity $-p$ in the regions $1$ and $2$. $\partial \mathcal{P}_{\lambda}$ contains the curve $\beta_2$ with multiplicity $-p^2 n$. We also have that $\chi(\mathcal{P}_{\lambda})=\frac{3p}{2}$ and $n_a(\mathcal{P}_{\lambda})=-\frac{p}{2}$. Recall that 
$$\mathcal{P}_A=pn\cdot \mathcal{P}_{\mu}+\mathcal{P}_{\lambda}.$$
Notice that $\mathcal{P}_A$ has the desired multiplicities in the regions surrounding the puncture, and $\partial \mathcal{P}_A$ contains the longitude for the pattern knot exactly once. We have that $\chi(\mathcal{P}_A)=p^2n+\frac{pn+3p}{2}$ and $n_a(\mathcal{P}_A)=\frac{-pn-p}{2}$.
\end{proof}

\begin{figure}[htb!]
\labellist
\small \hair 2pt
\pinlabel $-p+1$ at 155 210
\pinlabel $0$ at 30 210
\pinlabel $0$ at 290 210
\pinlabel $-1$ at 270 60
\pinlabel $-1$ at 48 60
\pinlabel $-p$ at 155 60
\pinlabel $0$ at 171.5 97
\pinlabel $0$ at 17 304
\pinlabel $1$ at 304 304
\pinlabel $2$ at 304 17
\pinlabel $3$ at 17 17
\endlabellist
\centering
\includegraphics[scale=.9]{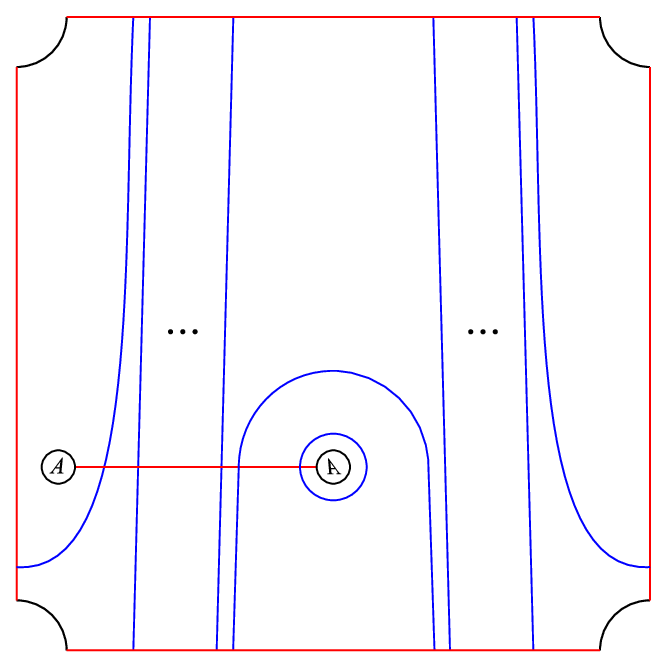}
\caption{The periodic domain $\mathcal{P}_{\mu}$ in $\mathcal{H}(p,1)$.}
\label{fig:cablepatternmuPD}
\end{figure}

\begin{figure}[htb!]
\centering
\labellist
\small \hair 2pt
\pinlabel $-p$ at 180 34
\pinlabel $-p$ at 163 210
\pinlabel $-p$ at 233 210
\pinlabel $-p$ at 290 210
\pinlabel $-p+1$ at 135 48
\pinlabel $-p$ at 275 48
\pinlabel $0$ at 27 210
\pinlabel $-1$ at 47.5 144
\pinlabel $0$ at 48 48
\pinlabel $0$ at 17 304
\pinlabel $1$ at 304 304
\pinlabel $2$ at 304 17
\pinlabel $3$ at 17 17
\endlabellist
\subfigure[]{
\includegraphics[scale=.75]{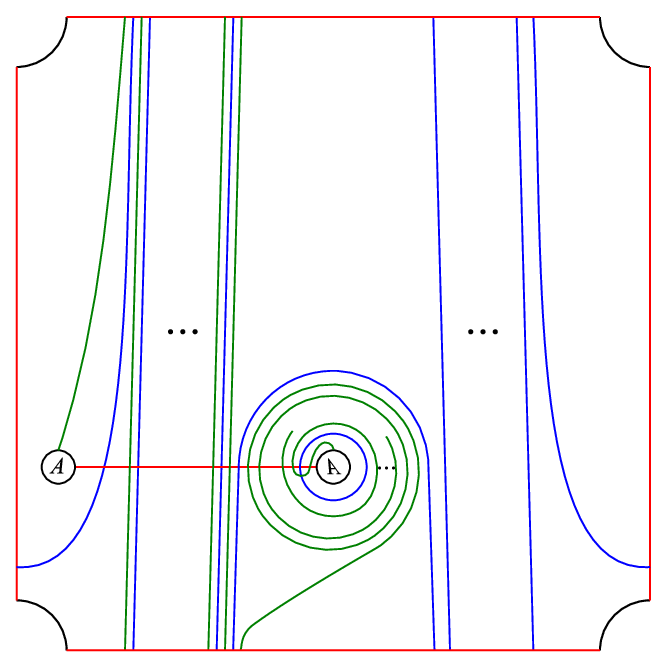}
\label{fig:cablepatternlambdaPD}
}
\labellist
\small \hair 2pt
\pinlabel $-p$ at 200 60
\pinlabel $-p+1$ at 85 95
\pinlabel $1$ at 125.8 199
\pinlabel $-1$ at 146 224.9
\pinlabel $0$ at 183 200
\pinlabel $p^2n$ at 155 244.7
\endlabellist
\subfigure[]{
\includegraphics[scale=.9]{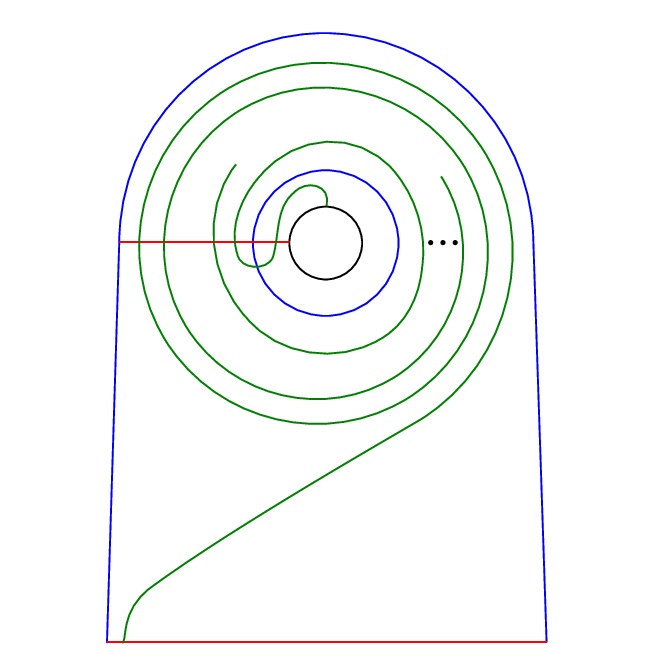}
\label{fig:cablepatternlambdaPDdetail}
}
\caption[]{The periodic domain $\mathcal{P}_{\lambda}$ in $\mathcal{H}(p,1)$, with detail in \subref{fig:cablepatternlambdaPDdetail}.}
\label{fig:cablepatternlambdaPDboth}
\end{figure}

We are now ready to prove Lemma \ref{lem:Alexandergrading}.

\begin{proof}[Proof of Lemma \ref{lem:Alexandergrading}]
The union of $\mathcal{P}_A$ and $p \cdot \mathcal{P}_D$ represents a Seifert surface for the cable knot $K_{p, pn+1}$. Combining Lemmas \ref{lem:PDx2} and \ref{lem:Aa}, we see that the Alexander grading of $ax_2$ is
\begin{align*}
A(ax_2) &= \tfrac{1}{2}\langle c_1(\underline{\mathfrak{s}}(ax_2)), [\widehat{F}]\rangle \\
&= \tfrac{1}{2}\big( \chi(\mathcal{P}_A)+2n_a(\mathcal{P}_A)+p\chi(\mathcal{P}_D)+2pn_{x_2}(\mathcal{P}_D)\big) \\
&= \tfrac{1}{2} \big( p^2 n +\tfrac{pn+3p}{2}-pn-p+2p\tau(K)-\tfrac{pn+p}{2} \big) \\
&= p\tau(K)+\tfrac{pn(p-1)}{2}.
\end{align*}
\end{proof}

\noindent Combining Lemma \ref{lem:Alexandergrading} with Lemma \ref{lem:generator} yields the result that when $\varepsilon(K)=1$, 
$$\tau(K_{p, pn+1})=p\tau(K)+\tfrac{pn(p-1)}{2}.$$

\subsection{The case $\varepsilon(K)=-1$}
\label{subsec:epsilon-1}
We now consider the case $\varepsilon(K)=-1$, proceeding as in the case $\varepsilon(K)=1$ above, with the appropriate modifications.

By Lemma \ref{lem:basis} and the symmetry properties of $CFK^{\infty}(K)$, we have a \emph{vertically} simplified basis $\{ x_i\}$ over $\F[U]$ for $CFK^-(K)$ with the following properties, after possible reordering:
\begin{enumerate}
\item $x_1$ is the distinguished element of a \emph{horizontally} simplified basis.
\item $\partial^{\mathrm{vert}} x_1=x_2$.
\item $x_0$ is the vertically distinguished element.
\end{enumerate}

We again let $Y_{K,n}$ be the 3-manifold $S^3-\mathrm{nbd} \ K$ with the parametrization specified by the meridian and an $n$-framed longitude. Consider the basis $\{ x_i \}$ as above. Now, if $n<2\tau(K)$, there is a portion of $\widehat{CFD}(Y_{K,n})$ (consisting of the unstable chain and an additional generator $y$ from a vertical chain) of the form
\begin{equation}
\label{eqn:n<2tau2b}
x_0 \overset{D_1}{\longrightarrow} z _1 \overset{D_{23}}{\longleftarrow} z_2 \overset{D_{23}}{\longleftarrow} \ldots  \overset{D_{23}}{\longleftarrow} z_m  \overset{D_{3}}{\longleftarrow} x_1 \overset{D_{1}}{\longrightarrow}y,
\end{equation}
where $m=2\tau(K)-n$. If $n=2\tau(K)$, there is a portion of $\widehat{CFD}(Y_{K,n})$ of the form
\begin{equation}
x_0 \overset{D_{12}}{\longrightarrow} x_1 \overset{D_{1}}{\longrightarrow}y.
\end{equation}
Finally, if $n>2\tau(K)$, there is a portion of $\widehat{CFD}(Y_{K,n})$ of the form
\begin{equation}
x_0 \overset{D_{123}}{\longrightarrow} z _1 \overset{D_{23}}{\longrightarrow} z_2 \overset{D_{23}}{\longrightarrow}  \ldots \overset{D_{23}}{\longrightarrow} z_m \overset{D_{2}}{\longrightarrow} x_1 \overset{D_{1}}{\longrightarrow}y,
\end{equation}
where $m=n-2\tau(K)$. In all of the cases above, $y$ has an incoming arrow labeled either $D_{23}$ or $D_{123}$, depending on the exact form of $CFK^{\infty}(K)$.

\begin{lemma}
When $\varepsilon(K)=-1$, the element $b_1y+ax_2$ in the tensor product 
$$\widehat{CFK}(K_{p, pn+1}) \simeq \widehat{CFA}(p,1)\boxtimes \widehat{CFD}(Y_{K,n})$$ is a generator of $\widehat{HF}(S^3)$, independent of $n$, the framing on $Y_{K,n}$. 
\end{lemma}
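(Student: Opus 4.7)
The plan is to verify that $b_1 y + a x_2$ is a cycle in the box tensor product and that it is not a boundary, so it represents the generator of $\widehat{HF}(S^3) \cong \F$ (the homology of the tensor product, by the pairing theorem). This parallels the $\varepsilon(K)=1$ case of Lemma \ref{lem:generator}, except that now the ``right'' candidate is a sum of two natural summands whose differentials pair off, rather than a single isolated generator.

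First I compute $\partial^\boxtimes(a x_2)$. The only outgoing $D$-arrow from $x_2$ in the shown portion of $\widehat{CFD}(Y_{K,n})$ is $D_{123}(x_2)=y^1_\ell$, and continuing via $D_{23}$ arrows back along the vertical chain (and possibly through $D_1$ into the unstable chain) produces only algebra-element strings beginning with $\rho_{123}$. Since no $m$-relation on $a$ in $\widehat{CFA}(p,1)$ begins with $\rho_{123}$, every such contribution vanishes, giving $\partial^\boxtimes(a x_2)=0$. Next I compute $\partial^\boxtimes(b_1 y)$. The generator $y$ has trivial outgoing $D$-arrows in the shown portion, so initially only the $k=0$ term $m_1(b_1)\otimes y = b_{2p-2}\otimes y$ contributes; the task is then to exhibit a matching cancellation coming from $a x_2$ (or from pairing with the nearby algebra relations $m_2(a,\rho_1)=b_{2p-2}$ and the $D_1$ arrow from $x_1$), so that after the proper grouping $\partial^\boxtimes(b_1 y + a x_2)=0$.

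For non-triviality, I argue that $b_1 y + a x_2$ is not a boundary: producing an $a x_2$ term in the image of $\partial^\boxtimes$ would require an $m$-relation $m_{k+1}(a,\ldots)=a$, all of which begin with $\rho_3$ and terminate in $\rho_2$, yet no $D$-sequence in the shown portion of $\widehat{CFD}(Y_{K,n})$ lands on $x_2$ via a $\rho_2$-terminated string. Independence from the framing $n$ then follows because the only $n$-dependence of the relevant CFD structure is through the length and form of the unstable chain, which attaches at $x_0$ and (when present) terminates at $x_1$, so neither $b_1 y$ nor $a x_2$ is directly affected. The main obstacle is the careful case analysis across the three framing regimes $n<2\tau(K)$, $n=2\tau(K)$, $n>2\tau(K)$ and the two subcases $\ell=1$ versus $\ell>1$ for the length of the vertical chain from $x_1$ to $x_2$, since the incoming arrow at $y$ is labelled either $D_{23}$ or $D_{123}$ depending on $\ell$; however, in each case the cancellation mechanism is uniform.
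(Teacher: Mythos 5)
Your plan founders on the cycle condition, and the difficulty you run into is real: $b_1y+ax_2$ is \emph{not} a cycle, and your proposal never closes this gap --- you compute $\partial^{\boxtimes}(ax_2)=0$ and $\partial^{\boxtimes}(b_1y)\ni m_1(b_1)\otimes y=b_{2p-2}y$, and then merely announce that ``the task is to exhibit a matching cancellation.'' No such cancellation exists for the element as written. The only $\mathcal{A}_{\infty}$-relation in $\widehat{CFA}(p,1)$ whose output is $b_{2p-2}$ is $m_2(a,\rho_1)=b_{2p-2}$, so cancelling $b_{2p-2}y$ requires tensoring $a$ against a generator with an outgoing $D_1$-arrow landing on $y$; that generator is $x_1$ (the distinguished element of the horizontally simplified basis, whose vertical arrow $\partial^{\mathrm{vert}}x_1=x_2$ produces the chain $x_1\overset{D_1}{\to}y\overset{D_{23}}{\leftarrow}\cdots\overset{D_{123}}{\leftarrow}x_2$), not $x_2$, which has no outgoing $D_1$ because it lies in the image of $\partial^{\mathrm{vert}}$. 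The element that works is $b_1y+ax_1$: the ``$ax_2$'' in the lemma statement is a typo, as the paper's own proof and the Alexander-grading computation immediately following it (which uses $A(ax_1)$) make clear. The contribution $m_2(a,\rho_1)\otimes D_1(x_1)=b_{2p-2}y$ from $ax_1$ is exactly the mechanism you gesture at parenthetically, but you misattribute it to $ax_2$, where it cannot occur.

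Once the element is corrected, the paper's argument is also cleaner than your cycle/non-boundary dichotomy: one checks that $\{ax_1,\ b_1y,\ b_{2p-2}y\}$ spans a direct summand of the tensor product (the key point being that $y$'s incoming arrow is labelled $D_{23}$ or $D_{123}$, neither of which pairs nontrivially with any relation in $\widehat{CFA}(p,1)$, so no further differentials enter or leave this triple), with $\partial(ax_1)=\partial(b_1y)=b_{2p-2}y$. The homology of this summand is $\F$, generated by $b_1y+ax_1$, and since the total homology is $\widehat{HF}(S^3)\cong\F$, this class must be the generator. Your non-boundary argument for the $a$-component and your remark about independence of $n$ are in the right spirit, but both need to be run for $ax_1$ rather than $ax_2$, and the case analysis you defer (the three framing regimes and $\ell=1$ versus $\ell>1$) is precisely what the direct-summand observation handles uniformly.
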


\begin{proof}
The proof of this lemma follows identically to the proof of Lemma \ref{lem:generator}. For example, tensoring $\widehat{CFA}(p,1)$ with the piece of $\widehat{CFD}(Y_{K,n})$ in Equation \ref{eqn:n<2tau2b}, we see that $\widehat{CFK}(K_{p, pn+1}) \simeq \widehat{CFA}(p, 1) \boxtimes \widehat{CFD}(Y_{K,n})$ has a direct summand consisting of the three generators $ax_1$, $b_1y$ and $b_{2p-2}y$ with a filtration-preserving differential $\partial (ax_1)=b_{2p-2}y$ and a differential $\partial (b_1y)=b_{2p-2}y$ that drops filtration level by $p-1$. There are no other differentials in this summand, since $y$ has an incoming arrow labeled either $D_{23}$ or $D_{123}$, neither of which can tensor non-trivially with any of the algebra relations in $\widehat{CFA}(p ,1)$. Thus, $b_1y+ax_1$ generates $\widehat{HF}(S^3)$. The other cases follow similarly.
\end{proof}

The Alexander grading of $ax_1$ is $p\tau(K)+\frac{pn(p-1)}{2}$, by Lemma \ref{lem:Alexandergrading}, where now $x_1$, rather than $x_2$, is the distinguished element of a horizontally simplified basis. By examining the grading shifts of the differentials in the subcomplex of $\widehat{CFK}(K_{p, pn+1})$ above, we see immediately that the Alexander grading of $b_1 y$ is $p\tau(K)+\frac{pn(p-1)}{2}+p-1$. In particular, when $\varepsilon(K)=-1$, 
\[\tau(K_{p, pn+1})=p\tau(K)+\frac{pn(p-1)}{2}+p-1 \]
as desired.

\begin{remark}
\emph{
Alternatively, the case of $\varepsilon(K)=-1$ follows by taking mirrors. Indeed, since
\[ m(K_{p, q}) = (mK)_{p,-q} \qquad \qquad \varepsilon(mK)=-\varepsilon(K)  \qquad \qquad \tau(mK)= -\tau(K) \]
we have that
\begin{align*}
	\tau(K_{p,q}) &= -\tau \big(m(K_{p,q}) \big) \\
	&= -\tau \big((mK)_{p,-q}\big) \\
	&= -\Big( p\tau(mK) + \tfrac{(p-1)(-q-1)}{2} \Big) \\
	&= p\tau(K) + \tfrac{(p-1)(q+1)}{2}.
\end{align*}
We thank the referee for pointing this out.
}
\end{remark}

\subsection{The case $\varepsilon(K)=0$}
\label{sec:epzero}
The values of $\tau(K_{p, pn+1})$ in the case $\varepsilon(K)=0$ can be computed by considering the model calculation where $K$ is the unknot, denoted $U$. When $\varepsilon(K)=0$, the invariant $\widehat{CFD}(Y_{K,n})$ has a direct summand that is isomorphic to $\widehat{CFD}(Y_{U,n})$. The tensor product splits along direct summands, so $\widehat{CFK}(K_{p, pn+1})$ has a direct summand that is filtered chain homotopic to $\widehat{CFK}(T_{p, pn+1})$, where $T_{p,pn+1}$ is the $(p, pn+1)$-torus knot, that is, the $(p, pn+1)$-cable of the unknot.

We remark that when $n \geq 2\tau(K)$ and $\varepsilon(K)=0$, $\widehat{CFD}(Y_{K,n})$ is not bounded. However, by \cite[Proposition 4.25]{LOT}, there exists an admissible diagram, and hence bounded $\widehat{CFA}$, for the $(p, 1)$-torus knot in $S^1 \times D^2$, in which case the tensor products above will be well-defined.

Hence, when $\varepsilon(K)=0$, the results of \cite{OS4ball} computing $\tau$ of torus knots tell us that
\begin{equation*}
\tau(K_{p, pn+1})= \left\{
\begin{array}{ll}
\frac{pn(p-1)}{2}+p-1 & \text{if } n<0\\
\frac{pn(p-1)}{2} & \text{if } n \geq 0.
\end{array} \right.
\end{equation*}
Combined with the results of Sections \ref{subsec:epsilon1} and \ref{subsec:epsilon-1}, this completes the proof of Theorem \ref{thm:p,pn+1}.

\section{Computation of $\tau$ for general $(p,q)$-cables}
\label{sec:pq}

We will now extend our results for $(p, pn+1)$-cables to general $(p,q)$-cables to prove Theorem \ref{thm:Main}. That is, we would like to prove that
$\tau(K_{p, q})$ behaves in one of three ways. If $\varepsilon(K)=1$, then
$$\tau(K_{p, q})=p\tau(K)+\frac{(p-1)(q-1)}{2}.$$
If $\varepsilon(K)=-1$, then
$$\tau(K_{p, q})=p\tau(K)+\frac{(p-1)(q+1)}{2}.$$
Finally, if $\varepsilon(K)=0$, then
\begin{equation*}
\tau(K_{p, q})= \left\{
\begin{array}{ll}
\frac{(p-1)(q+1)}{2} & \text{if } q<0\\
\frac{(p-1)(q-1)}{2} & \text{if } q > 0.
\end{array} \right.
\end{equation*}

This could be done by considering patterns for $(p, r)$-cables, for all $0<r<p$ with $r$ relatively prime to $p$. However, Van Cott's results from \cite{VanCott} eliminate the need to consider these more complicated patterns. We summarize her results below.

We expect the behavior of $\tau(K_{p,q})$ to be somehow related to $\tau(T_{p,q})$. Recall that as a function of $q$, $\tau(T_{p,q})$ is linear of slope $\frac{p-1}{2}$ for fixed $p$ and $q>0$. This motivates the following definition from \cite{VanCott}:

\begin{definition}
Fix an integer $p$ and a knot $K \subset S^3$. For all integers $q$ relatively prime to $p$, define $h(q)$ to be
$$h(q)=\tau(K_{p,q})-\tfrac{(p-1)q}{2}.$$
\end{definition}

\noindent Van Cott proves the following theorem:

\begin{theorem}[{\cite[Theorem 2]{VanCott}}]
The function $h(q)$ is a non-increasing $\frac{1}{2} \cdot \Z$-valued function which is bounded below. In particular, we have
$$-(p-1)\leq h(q)-h(r) \leq 0$$
for all $q>r$, where both $q$ and $r$ are relatively prime to $p$.
\end{theorem}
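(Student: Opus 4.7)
The plan is to combine the Ozsv\'ath--Szab\'o cobordism inequality for $\tau$, namely $|\tau(K_0) - \tau(K_1)| \leq g$ for any connected oriented smooth cobordism $\Sigma \subset S^3 \times [0,1]$ of genus $g$ between $K_0$ and $K_1$, with Hedden's two-sided bound for $(p,pn+1)$-cables quoted in the introduction.

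For the monotonicity $h(q) \leq h(r)$ when $q > r$, I would construct a smooth cobordism from $K_{p,q}$ to $K_{p,r}$ of genus exactly $(p-1)(q-r)/2$ sitting inside $\nu(K) \times [0,1]$, where $\nu(K)$ is a tubular neighborhood of $K$. After identifying $\nu(K)$ with the standard solid torus $S^1 \times D^2$, the two torus knot patterns $T_{p,q}$ and $T_{p,r}$ inside this solid torus can be connected by $(p-1)(q-r)$ oriented saddle moves (bands) realized inside $S^1 \times D^2 \times [0,1]$, producing a connected cobordism whose Euler characteristic is $-(p-1)(q-r)$ and hence whose genus is $(p-1)(q-r)/2$. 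Including this cobordism in $S^3 \times [0,1]$ and applying the $\tau$-inequality gives $\tau(K_{p,q}) - \tau(K_{p,r}) \leq (p-1)(q-r)/2$, which rearranges to $h(q) \leq h(r)$.

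For the lower bound $h(q) - h(r) \geq -(p-1)$, I would invoke Hedden's inequality on the subfamily $q = pn+1$. Subtracting $(p-1)(pn+1)/2$ from all three terms of Hedden's bound yields
$$p\tau(K) - \tfrac{p-1}{2} \ \leq\ h(pn+1)\ \leq\ p\tau(K) + \tfrac{p-1}{2}$$
for every integer $n$, an interval $I$ of length $p-1$ independent of $n$. Given any $q$ coprime to $p$, the monotonicity of $h$ from the previous step together with the fact that $\{pn+1\}_{n \in \Z}$ contains values both larger and smaller than $q$ forces $h(q) \in I$ as well. Hence $h$ is bounded below (the last assertion of the theorem) and for any $q > r$, $h(q) - h(r) \geq -(p-1)$. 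The half-integer quantization is automatic from $\tau \in \Z$ and $(p-1)q/2 \in \tfrac{1}{2}\Z$.

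The main obstacle is the explicit saddle-move construction in the first step: one must verify that the $(p,q)$- and $(p,r)$-torus knot patterns in $S^1 \times D^2$ really are connected by exactly $(p-1)(q-r)$ oriented bands producing a connected surface, not a disjoint union with several extra components that would destroy the genus count. This is a braid-theoretic check using the fact that $T_{p,q}$ and $T_{p,r}$ are closures of $(\sigma_1 \cdots \sigma_{p-1})^q$ and $(\sigma_1 \cdots \sigma_{p-1})^r$ respectively, and that a single full twist on $p$ strands can be undone by $p-1$ oriented band-resolutions while keeping the closure a knot at every intermediate stage (possible because $\gcd(q,p) = \gcd(r,p) = 1$).
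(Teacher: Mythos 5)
Your argument is correct, and it recovers the theorem by essentially the same mechanism as Van Cott's proof (which this paper only cites; the closest in-house analogue is the proposition in Section \ref{sec:epsilonpq} for the function $H$). The one genuine difference is in how the band moves are packaged. You build the genus-$\tfrac{(p-1)(q-r)}{2}$ cobordism directly between $K_{p,q}$ and $K_{p,r}$ inside $\mathrm{nbd}(K)\times[0,1]$ and apply the cobordism inequality $|\tau(K_0)-\tau(K_1)|\leq g(\Sigma)$; Van Cott (and Section \ref{sec:epsilonpq}) instead form the connected sum $K_{p,q}\,\#\,{-}K_{p,r}$, band-move it to a cable of the slice knot $K\#{-}K$, replace that by the corresponding torus knot up to concordance, and then invoke only $|\tau|\leq g_4$. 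The two routes have identical saddle counts; yours is a bit more economical since it avoids the concordance step, while the connected-sum route has the advantage of needing only the $4$-ball genus bound rather than the cobordism version of it. Your derivation of the uniform lower bound $-(p-1)$ (Hedden's inequality pins $h(pn+1)$ into an interval of length $p-1$ independent of $n$, and monotonicity plus the fact that $pn+1$ takes values on both sides of any $q$ traps $h(q)$ in the same interval) is exactly the intended argument. Your flagged obstacle is real but benign: the trace of a sequence of saddles from a knot to a knot, with no births or deaths, is automatically a connected surface with exactly two boundary circles, so only the Euler characteristic count $\chi=-(p-1)(q-r)$ matters and intermediate stages are allowed to be links. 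One side remark is off: undoing a full twist $(\sigma_1\cdots\sigma_{p-1})^p$ costs $p(p-1)$ oriented resolutions, not $p-1$; this does not affect your total count, which is just the difference in braid word lengths.
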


\noindent She then uses this result to extend Hedden's work on $(p, pn+1)$-cables in \cite{HeddencablingII} to general $(p,q)$-cables:

\begin{corollary}[{\cite[Corollary 3]{VanCott}}] 
\label{cor:VanCott}
Let $K \subset S^3$ be a non-trivial knot. Then the following inequality holds for all pairs of relatively prime integers $p$ and $q$:
$$p\tau(K)+\tfrac{(p-1)(q-1)}{2} \leq \tau(K_{p,q}) \leq p\tau(K)+\tfrac{(p-1)(q+1)}{2}.$$
When $K$ satisfies $\tau(K)=g(K)$, we have $\tau(K_{p,q})=p\tau(K)+\frac{(p-1)(q-1)}{2}$, whereas when $\tau(K)=-g(K)$, we have $\tau(K_{p,q})=p\tau(K)+\frac{(p-1)(q+1)}{2}$.
\end{corollary}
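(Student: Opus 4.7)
The plan is to deduce the corollary from Van Cott's Theorem~2 (stated just above) combined with Hedden's asymptotic formulas for $\tau$ of $(p,pn+1)$-cables recalled in the introduction. Recall $h(q) = \tau(K_{p,q}) - (p-1)q/2$. Van Cott's theorem tells us $h$ is $\tfrac12\Z$-valued, non-increasing, and bounded below; the idea is simply to pin down its value on the subsequence $\{pn+1\}$ using Hedden's result, and then apply monotonicity to sandwich $h(q)$ for every $q$ coprime to $p$.

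For the inequality, I would invoke Hedden's statement that for $|n|$ sufficiently large,
\[ \tau(K_{p,pn+1}) \in \left\{\,p\tau(K) + \tfrac{pn(p-1)}{2},\ p\tau(K) + \tfrac{pn(p-1)}{2} + (p-1)\,\right\}, \]
which, after subtracting $(p-1)(pn+1)/2$, rewrites as $h(pn+1) \in \{\,p\tau(K) - \tfrac{p-1}{2},\ p\tau(K) + \tfrac{p-1}{2}\,\}$. Given an arbitrary $q$ coprime to $p$, choose $n_+ \gg 0$ large enough that $pn_+ + 1 > q$ and Hedden's formula applies, and similarly $n_- \ll 0$ with $pn_- + 1 < q$. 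Since $h$ is non-increasing,
\[ p\tau(K) - \tfrac{p-1}{2} \ \le\ h(pn_+ + 1)\ \le\ h(q)\ \le\ h(pn_- + 1)\ \le\ p\tau(K) + \tfrac{p-1}{2}, \]
which, unwinding the definition of $h$, is exactly the asserted two-sided inequality.

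For the equality cases, Hedden's sharper statement when $\tau(K) = g(K)$ gives $\tau(K_{p,pn+1}) = p\tau(K) + pn(p-1)/2$ for \emph{all} integers $n$, hence $h(pn+1) = p\tau(K) - (p-1)/2$ identically on the subsequence. For any $q$ coprime to $p$, pick $n_\pm$ with $pn_-+1 < q < pn_++1$; the sandwich above now has equal endpoints, forcing $h(q) = p\tau(K) - (p-1)/2$, which rearranges to $\tau(K_{p,q}) = p\tau(K) + (p-1)(q-1)/2$. The case $\tau(K) = -g(K)$ is symmetric, using instead Hedden's identity $\tau(K_{p,pn+1}) = p\tau(K) + pn(p-1)/2 + (p-1)$ valid for all $n$.

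I do not anticipate a serious obstacle, since both ingredients (Van Cott's monotonicity and Hedden's formulas) are quoted from the literature and the argument is a direct monotonicity sandwich. The only mild point needing care is that the choice of $n_\pm$ in the sandwich must make $|n_\pm|$ large enough to lie in the range where Hedden's formula applies, but this is automatic: $q$ is held fixed while $n_\pm$ are free to be sent to $\pm\infty$.
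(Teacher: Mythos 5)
Your proof is correct and follows essentially the same route as the paper (which cites Van Cott and then reproduces exactly this monotonicity-sandwich argument immediately afterward to extend its own Theorem \ref{thm:p,pn+1}): pin down $h$ on the subsequence $pn+1$ via Hedden's formulas, then use that $h$ is non-increasing to control $h(q)$ for arbitrary $q$ coprime to $p$. The computations of $h(pn+1)$ and the unwinding of the definition of $h$ both check out.
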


\noindent The same argument used to prove the final two statements in the above theorem can be used to extend our Theorem \ref{thm:p,pn+1} for $(p, pn+1)$-cables to general $(p,q)$-cables. For completeness, we repeat the argument here.

Let $\varepsilon(K)=1$. By Theorem \ref{thm:p,pn+1}, we know that $\tau(K_{p, pn+1})=p\tau(K)+\frac{pn(p-1)}{2}$. Our goal is to prove the analogous statement for general $(p,q)$-cables, that is, $\tau(K_{p, q})=p\tau(K)+\frac{(p-1)(q-1)}{2}$. We see that
\begin{align*}
h(pn+1) &= \tau(K_{p, pn+1})-\tfrac{(p-1)(pn+1)}{2} \\
&= p\tau(K)-\tfrac{p-1}{2},
\end{align*}
for all $n$. Since the function $h$ is non-increasing, it follows that
$$h(q)=p\tau(K)-\tfrac{p-1}{2}$$
for all $q$. Hence
\begin{align*}
\tau(K_{p,q}) &= h(q)+ \tfrac{(p-1)q}{2} \\
&=p\tau(K)+\tfrac{(p-1)(q-1)}{2},
\end{align*}
as desired. A similar argument shows that in the case $\varepsilon(K)=-1$,
$$\tau(K_{p,q})=p\tau(K)+\tfrac{(p-1)(q+1)}{2}.$$

We are left with the case $\varepsilon(K)=0$. Let $\widehat{CFA}(p,q)$ denote the bordered invariant associated to a bordered Heegaard diagram compatible with the $(p, q)$-torus knot in $S^1 \times D^2$. (Such a diagram exists by \cite[Chapter 11.4]{LOT}, and can be made admissible by \cite[Proposition 4.25]{LOT}.) We again consider the tensor product of $\widehat{CFA}(p,q)$ with $\widehat{CFD}(Y_{K, 0})$, i.e.,  the bordered invariant associated to $Y_{K,0}=S^3 - \mathrm{nbd } \ K$ with the zero framing. Since $\varepsilon(K)=0$, the tensor product $\widehat{CFA}(p,q)\boxtimes \widehat{CFD}(Y_{K,0})$ contains a summand that is filtered chain homotopic to $\widehat{CFK}(T_{p,q})$. Therefore, $\tau(K_{p,q})$ agrees with $\tau(T_{p,q})$, and by \cite[Corollary 1.7]{OS4ball}, we have
\begin{equation*}
\tau(K_{p, q})= \tau(T_{p, q})= \left\{
\begin{array}{ll}
\frac{(p-1)(q+1)}{2} & \text{if } q<0\\
\frac{(p-1)(q-1)}{2} & \text{if } q > 0.
\end{array} \right.
\end{equation*}

\noindent This completes the proof of Theorem \ref{thm:Main}.

\section{Computation of $\varepsilon(K_{p, pn+1})$ when $\varepsilon(K)=1$}
\label{sec:epsilon1}

In this section, out goal is to show that $\varepsilon(K_{p, pn+1})=1$ when $\varepsilon(K)=1$. Let $K_{p,q;m,n}$ denote the $(m, n)$-cable of $K_{p,q}$. Theorem \ref{thm:Main} tells us that if $\tau(K_{p, pn+1; 2, -1})=2\tau(K_{p, pn+1})-1$, then $\varepsilon(K_{p, pn+1})=1$, so we will achieve our goal by computing $\tau(K_{p, pn+1; 2, -1})$.

\begin{proposition}
\label{prop:epcables}
If $\varepsilon(K)$=1, then $\varepsilon(K_{p, pn+1})=1$.
\end{proposition}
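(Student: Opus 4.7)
The strategy, as indicated just above, is to compute $\tau(K_{p, pn+1; 2, -1})$: by Theorem \ref{thm:Main} applied with companion $L = K_{p, pn+1}$ and pattern $(2, -1)$, the three possible values of $\varepsilon(L)$ predict $\tau(L_{2, -1}) \in \{\,2\tau(L)-1,\, 2\tau(L),\, 0\,\}$, and these three numbers are always distinct. Thus establishing $\tau(K_{p, pn+1; 2, -1}) = 2\tau(K_{p, pn+1}) - 1$ forces $\varepsilon(K_{p, pn+1}) = 1$.

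The plan is to compute $\tau(K_{p, pn+1; 2, -1})$ by bordered Heegaard Floer, parallel to Section \ref{sec:Main}. View $K_{p, pn+1; 2, -1}$ as a satellite with companion $K$ and iterated cable pattern $Q \subset S^1 \times D^2$; by the pairing theorem,
$$\widehat{CFK}(K_{p, pn+1; 2, -1}) \simeq \widehat{CFA}(Q) \boxtimes \widehat{CFD}(Y_{K, n}).$$
One assembles $\widehat{CFA}(Q)$ iteratively: first apply the $(p, 1)$-cabling bimodule between two torus boundaries (with internal framing set so that the intermediate cable reads as $(p, pn+1)$), then tensor the $(2, 1)$-pattern module $\widehat{CFA}(2, 1)$ from Section \ref{sec:Main} onto the outer boundary. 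The ``$-1$'' in the outer $(2, -1)$-cable is absorbed into framing data on the intermediate torus.

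The main steps are: (1) write down the relevant portion of $\widehat{CFD}(Y_{K, n})$ using the vertically simplified basis from Subsection \ref{subsec:epsilon1}, in which $x_2$ is simultaneously horizontally distinguished and satisfies $\partial^{\mathrm{vert}} x_1 = x_2$; (2) iterate the argument of Lemma \ref{lem:generator} to locate a cycle $\xi$ in $\widehat{CFA}(Q) \boxtimes \widehat{CFD}(Y_{K, n})$ that represents the generator of $\widehat{HF}(S^3)$; and (3) compute $A(\xi)$ via the Euler measure and local multiplicity formula of Lemma \ref{lem:Alexandergrading}, building a periodic domain that decomposes as one copy of the outer $(2, 1)$-pattern domain plus two copies of the inner $(p, 1)$-pattern domain plus $2p$ copies of a Seifert-surface domain in $Y_{K, n}$, with winding/framing corrections tracked through both cabling layers. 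Summing Euler measures and local multiplicities should give
$$A(\xi) = 2p\tau(K) + pn(p-1) - 1 = 2\tau(K_{p, pn+1}) - 1.$$

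The main obstacle is step (2): pinpointing the generator of $\widehat{HF}(S^3)$ in the iterated tensor product when the full structure of $\widehat{CFD}(Y_{K, n})$ is not known -- only the unstable chain and the portions containing $x_2$. The saving observation is that $a x_2$ from Lemma \ref{lem:generator} already had no incoming or outgoing differentials in the first tensor product because the $m_i$-relations starting at $a \in \widehat{CFA}(p, 1)$ are severely restricted by the idempotent and Reeb-labeling constraints; the analogous restrictions in the outer $(2, 1)$-layer, together with the fact that the ``$ax_2$-analog'' for the intermediate cable again lies at the head of a horizontal arrow (because $\varepsilon(K) = 1$ propagates through the inner cabling, producing the distinguished horizontal element one needs), should suffice to identify $\xi$ and to rule out any cycle of strictly smaller Alexander grading. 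A secondary subtlety is the Euler-measure bookkeeping in step (3), where the two intermediate framings must be chosen consistently so that the candidate periodic domain closes up across both boundary tori.
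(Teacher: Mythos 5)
Your reduction in the first paragraph is correct and is exactly the paper's: one computes $\tau(K_{p,pn+1;2,-1})$ and observes that the value $2\tau(K_{p,pn+1})-1$ is only consistent with $\varepsilon(K_{p,pn+1})=1$ in Theorem~\ref{thm:Main}. (A small inaccuracy: the three predicted values are not always pairwise distinct --- when $\tau(K_{p,pn+1})=0$ the $\varepsilon=-1$ and $\varepsilon=0$ cases both give $0$ --- but the value $2\tau(K_{p,pn+1})-1$ is always separated from the other two, which is all that is needed.) The target Alexander grading you write down, $2p\tau(K)+pn(p-1)-1$, is also the correct one.

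The gap is in how you propose to carry out the bordered computation, in particular in your step (2). First, the paper does not assemble $\widehat{CFA}(Q)$ from a $(p,1)$-cabling bimodule followed by $\widehat{CFA}(2,1)$; it writes down a single bordered Heegaard diagram for the iterated pattern $T_{p,1;2,2m+1}\subset S^1\times D^2$ (Figure~\ref{fig:iteratedcablepattern}), chooses $m=-p^2n-1$ so that pairing with $\widehat{CFD}(Y_{K,n})$ yields $\widehat{CFK}(K_{p,pn+1;2,-1})$, and then works with the $\mathcal{A}_\infty$-relations of that one module. Cabling bimodules are not part of the toolkit developed here, though in principle that route could be pursued. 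Second, and more seriously, your justification for locating the generator $\xi$ is circular: you appeal to the fact that the ``$ax_2$-analog for the intermediate cable again lies at the head of a horizontal arrow because $\varepsilon(K)=1$ propagates through the inner cabling'' --- but that propagation statement is precisely Proposition~\ref{prop:epcables}, the thing being proved. In the two-stage picture, knowing that the distinguished vertical element of $CFK^-(K_{p,pn+1})$ sits at the head of a horizontal arrow is equivalent (via Lemma~\ref{lem:basis}) to knowing $\nu'(K_{p,pn+1})=\tau(K_{p,pn+1})-1$, i.e.\ $\varepsilon(K_{p,pn+1})=1$. The paper avoids this by arguing directly on the iterated-pattern module: it verifies, by inspection of domains and by an $\mathcal{A}_\infty$-relation argument, that $a=\{a_1,a_2\}$ admits no relations of the form $m_1(a)$, $m_{2+i}(a,\rho_3,\rho_{23},\dots,\rho_{23})$, or $m_{2+i}(a,\rho_{123},\rho_{23},\dots,\rho_{23})$, so that $ax_2$ has no incoming or outgoing differentials regardless of the unknown parts of $\widehat{CFD}(Y_{K,n})$. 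Neither this verification nor the periodic-domain bookkeeping of your step (3) is actually carried out in the proposal, so as written the argument is an outline whose key step rests on the conclusion.
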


\begin{figure}[htb!]
\labellist
\small \hair 2pt
\pinlabel $0$ at 17 304
\pinlabel $1$ at 304 304
\pinlabel $2$ at 304 17
\pinlabel $3$ at 17 17
\pinlabel $z$ at 146 93
\pinlabel $w$ at 26 276
\pinlabel $a_1$ at 1 47
\pinlabel $a_2$ at 39 102
\pinlabel $\beta_1$ at 286 75
\pinlabel $\beta_2$ at 157 32
\pinlabel $\alpha_1^a$ at 1 168
\pinlabel $\alpha_2^a$ at 160 2
\endlabellist
\centering
\includegraphics[scale=1.2]{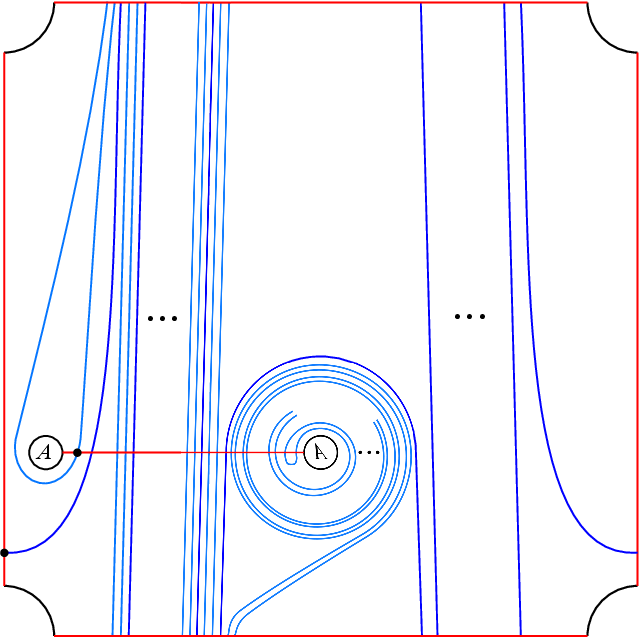}
\caption{Bordered Heegaard diagram for the $(p, 1; 2, 2m+1)$-torus knot in the solid torus. The light blue circle, $\beta_2$, winds $p+m$ times.}
\label{fig:iteratedcablepattern}
\end{figure}

We consider the pattern knot $T_{p, 1; 2, 2m+1} \subset S^1 \times D^2$. See Figure \ref{fig:iteratedcablepattern} and denote the associated bordered invariant $\widehat{CFA}(p,1; 2, 2m+1)$. Letting $Y_{K, n}$ be $S^3 - \textup{nbd } K$ with framing $n$, we then have
$$\widehat{CFK}(K_{p, pn+1; 2, 2p^2 n+2m+1}) \simeq \widehat{CFA}(p, 1; 2, 2m+1) \boxtimes \widehat{CFD}(Y_{K, n}).$$
Thus, we need to consider the case when 
$m=-p^2n-1$.

We will proceed as in Section \ref{sec:Main}, by computing a portion of $\widehat{CFA}(p,1;2,2m+1)$ that is sufficient to determine a generator for $\widehat{HF}(S^3)$, and then determining the Alexander grading of that generator. The remainder of this section consists of those computations.

If $\varepsilon(K)=1$, then by Lemma \ref{lem:basis}, we can find a vertically simplified basis $\{ x_i \}$ over $\F[U]$ for $CFK^-(K)$ with the following properties, after possible reordering:
\begin{enumerate}
	\item $x_2$ is the distinguished element of a horizontally simplified basis.
	\item $\partial^{\textup{vert}} x_1 =x_2$.
	\item $x_0$ is the vertically distinguished element.
\end{enumerate}
Let $a=\{a_1, a_2 \}$ in Figure \ref{fig:iteratedcablepattern}. We claim that $a x_2$ will be a generator for $\widehat{HF}(S^3)$ in the tensor product $\widehat{CFK}(K_{p, pn+1; 2, -1})\simeq \widehat{CFA}(p, 1; 2, -2p^2 n-1) \boxtimes \widehat{CFD}(Y_{K, n})$.

Considering the basis $\{ x_i \}$ above, we again have the following pieces of $\widehat{CFD}(Y_{K, n})$: 

\begin{itemize}
	\item If $n < 2\tau(K)$, there is a portion of $\widehat{CFD}(Y_{K, n})$ of the form
\begin{equation*}
x_0 \overset{D_1}{\longrightarrow} z _1 \overset{D_{23}}{\longleftarrow} z_2 \overset{D_{23}}{\longleftarrow} \ldots  \overset{D_{23}}{\longleftarrow} z_m  \overset{D_{3}}{\longleftarrow} x_2 \overset{D_{123}}{\longrightarrow}y,
\end{equation*}
where $m=2\tau(K)-n$. 
	\item If $n=2\tau(K)$, there is a portion of $\widehat{CFD}(Y_{K, n})$ of the form
\begin{equation*}
x_0 \overset{D_{12}}{\longrightarrow} x_2 \overset{D_{123}}{\longrightarrow}y.
\end{equation*}
	\item Finally, if $n>2\tau(K)$, there is a portion of $\widehat{CFD}(Y_{K, n})$ of the form
\begin{equation*}
x_0 \overset{D_{123}}{\longrightarrow} z _1 \overset{D_{23}}{\longrightarrow} z_2 \overset{D_{23}}{\longrightarrow}  \ldots \overset{D_{23}}{\longrightarrow} z_m \overset{D_{2}}{\longrightarrow} x_2 \overset{D_{123}}{\longrightarrow}y,
\end{equation*}
where $m=n-2\tau(K)$. 
\end{itemize}
Recall that the generators $x_0$ and $x_2$ are in the idempotent $\iota_1$, while the generators $z_1, \ldots, z_m$, and $y$ are in the idempotent $\iota_2$. In all of the above cases, there is either an arrow labeled $D_{23}$ leaving $y$, or an arrow labeled $D_1$ entering $y$.

Let us now consider $\widehat{CFA}(p, 1; 2, 2m+1)$. In particular, we would like to compute enough of $\widehat{CFA}$ to show that the generator $ax_2$ survives to generate $\widehat{HF}(S^3)$, so we look for algebra relations coming from domains entering or leaving $\{ a_1, a_2 \}$. We say that a domain from $a=\{ a_1, a_2 \}$ to a generator $b=\{ b_1, b_2 \}$ \emph{fixes} $a_1$ if one of $b_1$ or $b_2$ is equal to $a_1$. If a domain does not fix $a_1$, then we say that the domain \emph{moves} $a_1$.

\begin{lemma}
\label{lem:iteratedax2}
The element $ax_2$ generates $\widehat{HF}(S^3)$.
\end{lemma}

\begin{proof}
We will prove the lemma by showing that $ax_2$ has no incoming or outgoing arrows in $\widehat{CFA} \boxtimes \widehat{CFD}$.
We first notice that no domains \emph{from} $a$ that fix $a_2$ contribute to arrows leaving $ax_2$ in the complex $\widehat{CFA} \boxtimes \widehat{CFD}$. Nor do any domains \emph{to} $a$ that fix $a_2$ contribute to arrows entering $ax_2$ in $\widehat{CFA} \boxtimes \widehat{CFD}$. Both of these statements follow from the computation in Section \ref{subsec:epsilon1}.

In light of the above observation, we must consider domains that move $a_2$. There are no domains to $a$ that move $a_2$. This follows from the fact that there are only $3$ distinct regions in $\overline{\Sigma} \backslash (\boldsymbol{\alpha} \cup \boldsymbol{\beta})$ adjacent to $a_2$, the location of the basepoint $w$, and considering the multiplicities in the regions surrounding $a_2$.

We now consider domains from $a$ that move $a_2$. We claim that none of these domains will contribute to arrows leaving $ax_2$ in the complex $\widehat{CFA} \boxtimes \widehat{CFD}$. 
By inspection, there are no domains contributing to an algebra relation of the form $m_1(a)$.
Furthermore, we claim there are no algebra relations in $\widehat{CFA}$ of the form
\begin{align*}
m_{2+i}(a, \rho_{3}, \overbrace{\rho_{23}, \ldots, \rho_{23}}^{i}), &\quad i\geq 0\\
m_{2+i}(a, \rho_{123}, \overbrace{\rho_{23}, \ldots, \rho_{23}}^{i}), &\quad i\geq 0
\end{align*}
Indeed, there are no domains that would yield a relation of the form $m_{2+i}(a, \rho_{3}, \rho_{23}, \ldots, \rho_{23})$. To see this, begin at $a_1$, and follow what would have to be the boundary of the domain: south along the $\alpha^a_1$-arc, over the arc $\rho_3$, and possibly over the arc $\rho_{23}$. Next, at some point the boundary must turn onto the $\beta_2$-circle, and then to $a_2$. Finally, after $a_2$, the boundary must continue along the $\alpha$-circle to the the $\beta_2$-circle to return to $a_1$. No matter how this is done, this will never result in a null-homologous curve on the surface, and thus $m_{2+i}(a, \rho_{3}, \rho_{23}, \ldots, \rho_{23})=0$.

To exclude relations of the form $m_{2+i}(a, \rho_{123}, \rho_{23}, \ldots, \rho_{23})$, we will use $\mathcal{A}_{\infty}$-relations (Subsection \ref{subsec:Ainfty}) to reach a contradiction. Consider the $\mathcal{A}_{\infty}$-relation
\[ 0= \sum_{i=1}^{n} m_{n-i+1}(m_i (\overbrace{a, \rho_{12}, \rho_{3}, \rho_{23}, \ldots \rho_{23}}^i), \rho_{23}, \ldots, \rho_{23}) + m_{n-1}(a, \rho_{12}\cdot \rho_3, \rho_{23}, \ldots \rho_{23}). \]
By inspection, we see that $m_2(a, \rho_{12})=0$. Similary, $m_i(a, \rho_{12}, \rho_3, \ldots)=0$, since there are no domains that will yield $\rho_{12}$ followed by $\rho_3$. Thus, the summation above must be zero, so $m_{n-1}(a, \rho_{123}, \rho_{23}, \ldots, \rho_{23})=0$ as well.

We may conclude that $ax_2$ has no incoming or outgoing arrows in $\widehat{CFA} \boxtimes \widehat{CFD}$ and thus is a generator for $\widehat{HF}(S^3)$. More specifically, the Alexander grading of $ax_2$ will determine the value of $\tau$.
\end{proof}

Our next goal is to compute the Alexander grading of $ax_2$.

\begin{lemma}
\label{lem:iteratedAlexandergrading}
The Alexander grading of $ax_2$ is 
\[ A(ax_2) = 2\tau(K_{p, pn+1})-1.\]
\end{lemma}

\begin{proof}
To compute the Alexander grading of $ax_2$, we again use the formula
\begin{align*}
A(ax_2) &= \tfrac{1}{2}\langle c_1(\underline{\mathfrak{s}}(ax_2)), [\widehat{F}]\rangle \\
&= \tfrac{1}{2}\big( \chi(\mathcal{P})+2 n_{ax_2}(\mathcal{P}) \big),
\end{align*}
where now $\mathcal{P}=\mathcal{P}_A + 2p \mathcal{P}_D$ with $\mathcal{P}_A$ a domain on the bordered Heegaard diagram for $\widehat{CFA}$ and $\mathcal{P}_D$ a domain on the Heegaard diagram for $\widehat{CFD}$. We have that 
\begin{align*}
\chi(\mathcal{P}_A)+2n_a(\mathcal{P}_A) &= 2p^2n -pn+p-2 \\
\chi(\mathcal{P}_D)+2n_{x_2}(\mathcal{P}_D) &= 2\tau(K)-\tfrac{n}{2}-\tfrac{1}{2}.
\end{align*}
The domain $\mathcal{P}_D$ is exactly as in Lemma \ref{lem:Alexandergrading}.
As for $\mathcal{P}_A$, we procede as in the proof of Lemma \ref{lem:Alexandergrading}, stabilizing the diagram close to the basepoints and then adding a closed curve $\lambda$ representing a $0$-framed longitude. We again find it convenient to decompose the domain $\mathcal{P}_A$ as $\mathcal{P}_\lambda+2pn \mathcal{P}_\mu$. Here, $\mathcal{P}_\lambda$ is the domain that has multiplicity $-2p$ in regions $1$ and $2$, and whose boundary contains the longitude exactly once. (This uniquely specifies the domain.) The domain $\mathcal{P}_\mu$ is gotten from the analogous domain in Figure \ref{fig:cablepatternmuPD} by ``following''  that domain along the pushed out $\beta_2$ curve.
We have that
\begin{align*}
\chi(\mathcal{P}_\lambda) &= 3p-4p^2n\\
n_a(\mathcal{P}_\lambda) &= -p+p^2n-1\\
\chi(\mathcal{P}_\mu) &= 3p+\tfrac{1}{2}\\
n_a(\mathcal{P}_\mu) &= -\tfrac{p}{2}-\tfrac{1}{2}.\\
\end{align*}
Thus, we see that the Alexander grading of $ax_2$ is
\begin{align*}
A(ax_2) &= 2p \tau(K)+p^2n -pn -1 \\
&= 2\tau(K_{p, pn+1})-1,
\end{align*}
\end{proof}

\begin{proof}[Proof of Proposition \ref{prop:epcables}]
By Lemma \ref{lem:iteratedax2}, we have that $ax_2$ generates $\widehat{HF}(S^3)$ and by Lemma \ref{lem:iteratedAlexandergrading}, we have that $A(ax_2) = 2\tau(K_{p, pn+1})-1$.
Thus $\tau(K_{p, pn+1; 2, -1})=2\tau(K_{p, pn+1})-1$, implying that $\varepsilon(K_{p, pn+1})=1$. This completes the proof of Proposition \ref{prop:epcables}.
\end{proof}

\section{Computation of $\varepsilon$ for $(p,q)$-cables}
\label{sec:epsilonpq}

In the previous section, we proved that if $\varepsilon(K)=1$, then $\varepsilon(K_{p, pn+1})=1$. The goal of this section is to prove Theorem \ref{thm:ep}, that is, to describe the behavior of $\varepsilon$ under cabling, for all values of $\varepsilon$ and for all $p$ and $q$.

What follows is a straightforward modification of Van Cott's work in \cite{VanCott}. Fix a knot $K$ and integers $p$ and $m$, $m$ odd, and define the function
$$H(q)=\tau(K_{p, q; 2, m})-(p-1)q$$
for all $q$ relatively prime to $p$. 

\begin{proposition}
The function $H$ is non-increasing; that is,
$$H(q)-H(r) \leq 0$$
for all $q>r$, where both $q$ and $r$ are relatively prime to $p$.
\end{proposition}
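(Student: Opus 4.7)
The plan is to adapt Van Cott's proof of \cite[Theorem 2]{VanCott} to the iterated cable setting, replacing her cobordism between $K_{p,q}$ and $K_{p,r}$ by its $(2,m)$-satellite. Recall that the geometric input in \cite{VanCott} is the existence, for $q > r$ both coprime to $p$, of a smoothly embedded orientable cobordism $F_0 \subset S^3 \times [0,1]$ from $K_{p,q}$ to $K_{p,r}$ of genus exactly $\frac{(p-1)(q-r)}{2}$; this surface is assembled from the standard genus $\frac{(p-1)(q-r)}{2}$ cobordism between $T_{p,q}$ and $T_{p,r}$ inside $(S^1 \times D^2) \times [0,1]$, embedded via a tubular neighborhood of the companion knot $K$.

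The first step is to apply the $(2,m)$-satellite operation to the surface $F_0$ itself. Since $F_0$ is orientable in the parallelizable ambient $S^3 \times [0,1]$, its normal bundle is trivial, so a tubular neighborhood is identified with $F_0 \times D^2$. Inside this neighborhood I would place the surface whose cross section over each point of $F_0$ realizes the $T_{2,m}$ pattern; because $m$ is odd, this yields a connected orientable embedded cobordism $F \subset S^3 \times [0,1]$ from $K_{p,q;2,m}$ to $K_{p,r;2,m}$. The key computation is that $F$ is a connected $2$-fold cover of $F_0$ (with $m$ determining the twisting), so
\[
\chi(F) \;=\; 2\,\chi(F_0) \;=\; -2(p-1)(q-r),
\]
and having two boundary components forces $g(F) = (p-1)(q-r)$.

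The second step is to invoke the standard $4$-dimensional $\tau$-inequality $|\tau(K_0)-\tau(K_1)| \leq g(F)$ for any smooth oriented cobordism $F$ in $S^3 \times [0,1]$ between knots $K_0$ and $K_1$; this is a cobordism reformulation of $g_4 \geq |\tau|$, obtained by capping off $F$ so that $K_0 \#\, \overline{K_1}$ bounds a genus $g(F)$ surface in $B^4$. Applied to our $F$, this yields
\[
\tau(K_{p,q;2,m}) - \tau(K_{p,r;2,m}) \;\leq\; (p-1)(q-r),
\]
and subtracting $(p-1)(q-r)$ from both sides gives precisely $H(q) - H(r) \leq 0$.

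The main obstacle is the cobordism-doubling step: carefully verifying that the $(2,m)$-satellite of an embedded orientable surface $F_0$ of genus $g$ is indeed an embedded orientable cobordism of genus $2g$ between the $(2,m)$-cables of $\partial F_0$, with Euler characteristic exactly $2\chi(F_0)$. This requires matching the framing of the normal bundle of $F_0$ with the framing used to define the $(2,m)$-cable on the boundary, so that no additional bands or twist corrections are needed beyond what is packaged in the pattern. Once this geometric input is pinned down, the rest of the argument is a routine application of the $\tau$-inequality.
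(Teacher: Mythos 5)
Your proof is correct, but it takes a genuinely different route from the paper's. The paper forms the connected sum $K_{p,q;2,m} \# -(K_{p,r;2,m})$, performs $2p+2k(p-1)$ explicit band moves to reach $(K\#-K)_{p,q-r-k;2,-1}$ (the auxiliary $k$ being needed so that $q-r-k$ stays coprime to $p$), uses that this knot is concordant to the iterated torus knot $T_{p,q-r-k;2,-1}$, whose $\tau$ is already computed, and then applies $g_4\ge|\tau|$; unwinding the resulting estimate yields exactly your inequality $\tau(K_{p,q;2,m})-\tau(K_{p,r;2,m})\le(p-1)(q-r)$. You instead take the $(2,m)$-satellite of the genus-$\frac{(p-1)(q-r)}{2}$ cobordism from $K_{p,q}$ to $K_{p,r}$. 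Your route is more economical: it needs no coprimality bookkeeping and no knowledge of $\tau$ of iterated torus knots, only the cobordism form of the $\tau$-bound. The price is the doubling step you flag, and that step does close up: a double cover of $F_0$ restricting to the connected double cover of each boundary circle exists because the two boundary classes coincide and are primitive in $H_1(F_0;\Z/2\Z)$, and the framing match on the two ends follows from the vanishing of the relative Euler number of a surface in $S^3\times[0,1]$ that is null-homologous rel boundary, measured against the Seifert framings --- the same verification underlying the standard fact, used elsewhere in the paper, that cables of concordant knots are concordant. The paper's band-move formulation quietly sidesteps this point by invoking only the annulus (concordance) case of the satellite construction.
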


\begin{proof}
Recall our convention that $p>1$. Let $q$ and $r$ be integers relatively prime to $p$ with $q>r$. Consider the connected sum
$$K_{p,q;2,m} \# -(K_{p,r;2,m}).$$
Notice that $-(K_{p,r;2,m})=(-K)_{p, -r; 2, -m}$. Let $k$ be the smallest positive integer such that $q-r-k$ is relatively prime to $p$. (Note that $k$ may be equal to zero, and that $q-r-k>0$.) In \cite[Section 2]{VanCott}, Van Cott describes a band move, which is an operation on a knot (or link) that creates a cobordism between the initial and final links.
By performing $2p+2k(p-1)$ band moves, we can obtain the knot
$$(K \# -K)_{p, q-r-k; 2, -1}.$$
Indeed, we first use $2p$ band moves to obtain the link $(K \# -K)_{p, q-r; 2, -1}$, and then $2k(p-1)$ band moves to obtain the knot $(K \# -K)_{p, q-r-k; 2, -1}$.

The knot $(K \# -K)_{p, q-r-k; 2, -1}$ is concordant to the iterated torus knot $T_{p, q-r-k; 2, -1}$ since $K \# -K$ is slice. Thus, we have a genus $p+k(p-1)$ cobordism between $K_{p,q;2,m} \# -K_{p,r;2,m}$ and $T_{p, q-r-k; 2, -1}$. Since $|\tau|$ is a lower-bound on the $4$-ball genus, we have
\begin{align*}
|\tau(K_{p,q;2,m} \# -K_{p,r;2,m} \# -T_{p, q-r-k; 2, -1})| &\leq p+k(p-1) \\
|\tau(K_{p,q;2,m} ) - \tau (K_{p,r;2,m}) - \big( (p-1)(q-r-k-1)-1 \big)| &\leq p+k(p-1) \\
|H(q)-H(r)-(p-1)(-k-1)+1| &\leq p+k(p-1) \\
H(q)-H(r) &\leq 0,
\end{align*}
completing the proof of the proposition.
\end{proof}

For $K$ with $\varepsilon(K)=1$, we have that
\begin{align*}
H(pn+1) &= \tau(K_{p, pn+1; 2, m})-(p-1)(pn+1) \\
&= 2p\tau(K)+(p-1)pn+\tfrac{m-1}{2}-(p-1)(pn+1) \\
&= 2p\tau(K)+\tfrac{m-1}{2}-(p-1)
\end{align*}
for all $n$. But since the function $H$ is non-increasing, this implies that $H(q)=2p\tau(K)+\tfrac{m-1}{2}-(p-1)$ for all $q$ relatively prime to $p$. Hence,
\begin{align*}
\tau(K_{p,q; 2, m}) &= H(q) +(p-1)q\\
&= 2p\tau(K)+\tfrac{m-1}{2}-(p-1)+(p-1)q \\
&= 2\big(p\tau(K)+\tfrac{(p-1)(q-1)}{2}\big)+\tfrac{m-1}{2} \\
&= 2\tau(K_{p,q})+\tfrac{m-1}{2}
\end{align*}
and so $\varepsilon(K_{p,q})=1$, by Theorem \ref{thm:Main}. Thus, we have shown that if $\varepsilon(K)=1$, then $\varepsilon(K_{p,q})=1$ for all $p$ and $q$.

Since $\varepsilon(-K)=-\varepsilon(K)$ and $(-K)_{p,q}=-K_{p,-q}$, we have that if $\varepsilon(K)=-1$, then $\varepsilon(K_{p,q})=-\varepsilon(-K_{p,q})=-\varepsilon((-K)_{p,-q})=-1$; that is, if $\varepsilon(K)=-1$, then $\varepsilon(K_{p,q})=-1$.

For the case $\varepsilon(K)=0$, we again appeal to a model calculation, as in Section \ref{sec:epzero}. That is, if $\varepsilon(K)=0$, then $\tau(K_{p,q;r,s})$ agrees with $\tau((T_{p,q})_{r,s})$ for any $p, r>1$ and any $q,s$.
This implies that if $\varepsilon(K)=0$, then
$$\varepsilon(K_{p,q})=\varepsilon(T_{p,q}).$$
Thus, we have completely described the behavior of $\varepsilon$ under cabling.

\section{Proof of Corollaries \ref{cor:B} and \ref{cor:Livingston}}
\label{sec:cor}

We conclude this paper with the proofs of the corollaries. 

\begin{proof}[Proof of Corollary \ref{cor:B}] By Theorem \ref{thm:Main}, it is sufficient to find knots $K^+_n$ and $K^-_n$ with $\tau(K^{\pm}_n)=n$ and $\varepsilon(K^{\pm}_n)=\pm 1$.

For the right-handed trefoil, which we will denote $R$, we have that $\tau(R)=\varepsilon(R)=1$, and for the left-handed trefoil $L$, we have that $\tau(L)=\varepsilon(L)=-1$. Hence, by Theorems \ref{thm:Main} and \ref{thm:ep}:
\begin{align*}
	\tau(R_{2,2m+1})&=2+m \\
	\varepsilon(R_{2, 2m+1})&=1 \\
	\tau(L_{2, 2m+1})&=3+m \\
	\varepsilon(L_{2, 2m+1})&=-1,
\end{align*}
and so by taking a appropriate cable of a right- or left-handed trefoil, we can construct knots with arbitrary $\tau$, and  with $\varepsilon$ equal to our choice of $\pm 1$. (Note that this is one way to construct a knot $K$ with $\tau(K)=0$ but $\varepsilon(K)\neq 0$.) More precisely, let $K^+_n=R_{2, 2n-3}$ and let $K^-_n=L_{2, 2n-5}$, and so $\tau(K^{\pm}_n)=n$ and $\varepsilon(K^{\pm}_n)=\pm 1$. This completes the proof of Corollary \ref{cor:B}.
\end{proof}

\begin{proof}[Proof of Corollary \ref{cor:Livingston}]
This corollary was suggested to me by Livingston. We would like to prove that if $\varepsilon(K) \neq \mathrm{sgn}\ \tau(K)$, then $g_4(K) \geq |\tau(K)|+1$. Recall that 
\begin{itemize}
	\item If $\varepsilon(K)=0$, then $\tau(K)=0$.
	\item $\tau(\overline{K})=-\tau(K)$.
	\item $\varepsilon(\overline{K})=-\varepsilon(K)$.
\end{itemize}
Hence, without loss of generality, we may assume that $\tau(K) \geq 0$ and that $\varepsilon(K) = -1$, in which case $\tau(K_{2,1})=2\tau(K)+1$.

We can construct a slice surface for $K_{2,1}$ by taking two parallel copies of a minimal genus slice surface for $K$ and connecting them with a single twisted strip, hence
$$g_4(K_{2,1}) \leq 2g_4(K).$$
We also have that $|\tau(K_{2,1})| \leq g_4(K_{2,1})$, or
$$2\tau(K) +1 \leq g_4(K_{2,1}),$$
so upon combining these two inequalities, we get
$$\tau(K)+\tfrac{1}{2} \leq g_4(K).$$
But $\tau(K)$ and $g_4(K)$ are both integers, hence
$$\tau(K)+1 \leq g_4(K),$$
concluding the proof of Corollary \ref{cor:Livingston} when $\tau(K) \geq 0$. The case $\tau(K) <0$ follows by taking mirrors.
\end{proof}

\begin{remark}
\emph{Alternatively, Corollary \ref{cor:Livingston} follows from the proof of Theorem 1.1 in \cite{OS4ball} as follows. Without loss of generality, suppose that $\tau(K) \geq 0$ and $\varepsilon(K)=-1$. Then the map 
\[ \widehat{F}_{n, m}: \widehat{HF}(S^3) \rightarrow \widehat{HF}(S^3_{-n}(K), [m])\]
in \cite[Proposition 3.1]{OS4ball} is non-trivial for $m \leq \tau(K)$. Applying this fact to the proof of Theorem 1.1 and letting $W$ be $B^4$, we obtain the result that
\[ \tau(K)+1 \leq g_4(K),\]
as desired.
}
\end{remark}

\bibliographystyle{amsalpha}

\bibliography{mybib}

\end{document}